\definecolor{rouge}{rgb}{0.85,0.1,.4}
\definecolor{bleu}{rgb}{0.1,0.2,0.9}
\definecolor{violet}{rgb}{0.7,0,0.8}
\DeclareMathAlphabet{\mathpzc}{OT1}{pzc}{m}{it}
\theoremstyle{plain}
\newtheorem{theorem}{Theorem}[section]
\newtheorem{lemma}[theorem]{Lemma}
\newtheorem{theo}[theorem]{Theorem}
\newtheorem{coro}[theorem]{Corollary}
\newtheorem{prop}[theorem]{Proposition}
\theoremstyle{definition}
\theoremstyle{remark}
\newtheorem{rema}[theorem]{Remark}
\def\k{{\Bbbk}}
\def\rg{\ell}               
\def\poie#1#2#3#4#5#6#7#8#9{\def\un{#5#6#7#8#9}\def\deux{#6#7#8#9}\def\trois{#2#4#8#9}
\def\quatre{#8#9}\def\cinq{#5#6#7}\def\six{#6#7}\def\sept{#2#4}
\ifx\un\empty {#1}_{#2}{#3 \hskip 0.15em}{#1}_{#4} \else \ifx\deux\empty 
{#5}(#1_{#2}){#3 \hskip 0.15em}{#5}(#1_{#4})
\else \ifx\trois\empty {#5}_{#6}(#1){#3 \hskip 0.15em}{#5}_{#7}(#1) 
\else \ifx\quatre\empty {#5}_{#6}(#1_#2){#3 \hskip 0.15em}{#5}_{#7}(#1_#4) 
\else \ifx\cinq\empty {#1}_{#2}^{#8}{#3 \hskip 0.15em}#1_#4^{#9} 
\else \ifx\six\empty {#5}(#1_{#2}^{#8}){#3 \hskip 0.15em}{#5}(#1_{#4}^{#9}) 
\else \ifx\sept\empty {#5}_{#6}(#1)^{#8}{#3 \hskip 0.15em}{#5}_{#7}(#1)^{#9} \else
{#5}_{#6}(#1_{#2}^{#8})^{#9}{#3 \hskip 0.15em}{#5}_{#7}(#1_{#4}^{#8})^{#9} 
\fi \fi \fi \fi \fi \fi \fi}
\def\poi#1#2#3#4#5#6#7{\def\un{#5#6#7}\def\deux{#6#7}
\def\trois{#2#4} \def\cinq{#3#4#5}
\ifx\un\empty {#1}_{#2}{#3 \hskip 0.15em}{#1}_{#4} \else
\ifx\deux\empty {#5}(#1_{#2}){#3 \hskip 0.15em}{#5}(#1_{#4}) \else
\ifx\trois\empty {#5}_{#6}(#1){#3 \hskip 0.15em}{#5}_{#7}(#1) \else
{#5_{#6}}(#1_{#2}){#3 \hskip 0.15em}{#5_{#7}}(#1_{#4}) \fi \fi \fi}
\def\dv#1#2{\langle {#1},{#2}\rangle}
\def\ec#1#2#3#4#5{\def\un{#3#4#5}\def\deux{#3#5}\def\trois{#3}
\def\four{#2#4#5}\def\five{#2#5}\def\six{#2}\def\seven{#3#4}
\def\eight{#2#4} \def\nine{#2#3#4}
\ifx\nine\empty {\rm #1}_{#5} \else
\ifx\un\empty {\rm #1}({\goth #2}) \else
\ifx\deux\empty {\rm #1}({\goth #2}_{#4}) \else
\ifx\trois\empty {\rm #1}_{#5}({\goth #2}_{#4}) \else
\ifx\four\empty {\rm #1}(#3) \else
\ifx\five\empty {\rm #1}(#3_{#4}) \else
\ifx\six\empty {\rm #1}_{#5}(#3_{#4}) \else
\ifx\seven\empty {\rm #1}_{#5} ({\goth#2})\else
\ifx\eight\empty {\rm #1}_{#5}({#3})
\fi \fi \fi \fi \fi \fi \fi \fi \fi}
\def\hec#1#2#3#4#5{\def\un{#3#4#5}\def\deux{#3#5}\def\trois{#3}
\def\four{#2#4#5}\def\five{#2#5}\def\six{#2}\def\seven{#3#4}
\def\eight{#2#4} \def\nine{#2#3#4}
\ifx\nine\empty \hat{{\rm #1}}_{#5} \else
\ifx\un\empty \hat{{\rm #1}}({\goth #2}) \else
\ifx\deux\empty \hat{{\rm #1}}({\goth #2}_{#4}) \else
\ifx\trois\empty \hat{{\rm #1}}_{#5}({\goth #2}_{#4}) \else
\ifx\four\empty \hat{{\rm #1}}(#3) \else
\ifx\five\empty \hat{{\rm #1}}(#3_{#4}) \else
\ifx\six\empty \hat{{\rm #1}}_{#5}(#3_{#4}) \else
\ifx\seven\empty \hat{{\rm #1}}_{#5} ({\goth#2})  \else
\ifx\eight\empty \hat{{\rm #1}}_{#5}({#3})
\fi \fi \fi \fi \fi \fi \fi \fi \fi}
\def\es#1#2{\ec {#1}{}{#2}{}{}}
\def\ai#1#2#3{\def\deux{#2#3} \def\trois{#3} \def\quatre{#2} 
\ifx\deux\empty \es S{{\goth #1}}^{{\goth #1}} \else
\ifx\trois\empty \es S{{\goth #1}^{#2}}^{{\goth #1}^{#2}} \else
\ifx\quatre\empty \es S{{\goth #1}_{#3}}^{{\goth #1}_{#3}} \else
\es S{{\goth #1}_{#3}^{#2}}^{{\goth #1}_{#3}^{#2}} \fi \fi \fi}
\def\Bbb{\mathbb}
\def\goth{\mathfrak}
\def\cal{\mathcal}
\def\gi#1#2#3#4{\def\trois{#3#4} \def\quatre{#4}\def\cinq{#3}\ifx\trois\empty {\rm i}_{#1,{\goth #2}}
\else \ifx\quatre\empty {\rm i}_{#1_{#3},{\goth #2}} \else\ifx\cinq\empty {\rm i}_{#1,{\goth #2}_{#4}} \else {\rm i}_{#1_{#3},{\goth #2}_{#4}} \fi \fi \fi}
\def\j#1#2{\def\deux{#2} \ifx\deux\empty {\rm rk}\hskip .125em{{\goth #1}} \else {\rm rk}\hskip .125em{{\goth #1}_{#2}} \fi}
\def\aj#1#2{\def\deux{#2} \ifx\deux\empty {\rm j}_{{\goth #1}} \else {\rm j}_{{\goth #1}_{#2}} \fi}
\def\an#1#2{\def\deux{#2} \ifx\deux\empty {\cal O}_{#1} \else {\cal O}_{#1,#2} \fi }
\def\han#1#2{\def\deux{#2} \ifx\deux\empty {\hat{{\cal O}}}_{#1} \else {\hat{{\cal O}}}_{#1,#2} \fi }
\def\dim{{\rm dim}\hskip .125em}
\def\ad{{\rm ad}\hskip .1em}
\def\Ad{{\rm Ad}\hskip .1em}   
\def\det{{\rm det}\hskip .125em}
\def\j#1#2{\ell _{{\goth #1}_{#2}}}
\def\n{{\rm n}}
\def\u{{\rm u}}
\def\ex #1#2{\mbox{$\bigwedge^{#1}(#2)$}} 
\def\b#1#2{{\mathrm {b}}_{{\mathfrak{#1}}_{#2}}}
\title
[Subspaces of the exterior algebra]
{On some subspaces of the exterior algebra of a simple Lie algebra}
\author
[J-Y Charbonnel]{Jean-Yves Charbonnel}
\address{Jean-Yves Charbonnel, Universit\'e de Paris - CNRS \\
Institut de Math\'ematiques de Jussieu - Paris Rive Gauche\\
UMR 7586 \\ Groupes, repr\'esentations et g\'eom\'etrie \\
B\^atiment Sophie Germain \\ Case 7012 \\ 
75205 Paris Cedex 13, France}
\email{jean-yves.charbonnel@imj-prg.fr}
\subjclass
{14A10, 14L17, 22E20, 22E46 }
\keywords
{root system, exterior algebra, Borel subalgebra}
\date\today
\begin{document}

\large

\begin{abstract}
  In this article, we are interested in some subspaces of the exterior algebra of a simple
  Lie algebra ${\goth g}$. In particular, we prove that some graded subspaces of degree
  $d$ generate the ${\goth g}$-module $\ex d{{\goth g}}$ for some integers $d$.
\end{abstract}

\maketitle

\setcounter{tocdepth}{1}
\tableofcontents

\section{Introduction} \label{int}
In this note, the base field $\k$ is algebraically closed of characteristic $0$, 
${\goth g}$ is a simple Lie algebra of finite dimension, $\rg$ is its rank,
and $G$ is its adjoint group.    

\subsection{Main results} \label{int1}
Let ${\goth b}$ be a Borel subalgebra of ${\goth g}$, ${\goth h}$ a Cartan subalgebra
of ${\goth g}$, contained in ${\goth b}$ and ${\goth u}$ the nilpotent radical of 
${\goth b}$. Set $\b g{} := \dim {\goth b}$ and $n := \b g{}-\rg$. For $k$ a nonnegative 
integer, let $\ex k{{\goth g}}$ be the component of degree $k$ of the exterior algebra 
$\ex {}{{\goth g}}$ of ${\goth g}$. The adjoint action of $G$ in ${\goth g}$ induces an 
action of $G$ in $\ex {}{{\goth g}}$. For all $k$, $\ex k{{\goth g}}$ is invariant under 
this action.

For ${\goth p}$ parabolic subalgebra of ${\goth g}$, containing ${\goth b}$, denote
by ${\goth p}_{\u}$ the nilpotent radical of ${\goth p}$, ${\goth l}$ the reductive
factor of ${\goth p}$, containing ${\goth h}$, ${\goth z}$ the center of ${\goth l}$
and ${\goth p}_{-,\u}$ the complement to ${\goth p}$ in ${\goth g}$, invariant under 
the adjoint action of ${\goth h}$. Let ${\goth d}$ be the derived algebra of ${\goth l}$
and $\poi {{\goth d}}1{,\ldots,}{\n}{}{}{}$ its simple factors. Set:
$$ E_{{\goth p}} := {\goth p}_{-,\u}\oplus {\goth z}\oplus {\goth p}_{\u}, \quad 
n_{i} := \dim {\goth d}_{i}\cap {\goth u}, $$ $$  
{\Bbb I}_{k} := \{(\poi j1{,\ldots,}{\n}{}{}{}) \in {\Bbb N}^{\n} \; \vert \;
j_{1}\leq n_{1} ,\ldots, j_{\n}\leq n_{\n},\poi j1{+\cdots +}{\n}{}{}{} = k\}$$ 
for $k$ positive integer. Denote by $V'_{k,{\goth p}}$ and 
$V_{k,{\goth p}}$ the subspaces of $\ex k{{\goth g}}$,
$$ V'_{k,{\goth p}} := \bigoplus _{(\poi j1{,\ldots,}{\n}{}{}{}) \in {\Bbb I}_{k}}
\ex {j_{1}}{{\goth d}_{1}}\wedge \cdots \wedge \ex {j_{\n}}{{\goth d}_{\n}} \quad  
\text{and} \quad
V_{k,{\goth p}} := \bigoplus _{i=0}^{k} \ex i{E_{{\goth p}}}\wedge V'_{k-i,{\goth p}} .$$
The goal of this note is the following theorem:

\begin{theo}\label{tint}
Let $k=1,\ldots,n$. Then $\ex k{{\goth g}}$ is the $G$-submodule of $\ex k{{\goth g}}$ 
generated by $V_{k,{\goth p}}$.  
\end{theo}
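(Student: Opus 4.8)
The plan is to pass to the orthogonal complement and reduce the statement to the nonexistence of highest weight vectors. Fix the $G$-invariant nondegenerate symmetric form on $\ex k{\goth g}$ induced by the Killing form of ${\goth g}$, and let $W$ denote the $G$-submodule generated by $V_{k,{\goth p}}$. Since $W$ is a $G$-submodule, its orthogonal $W^{\perp}$ is the largest $G$-submodule contained in $V_{k,{\goth p}}^{\perp}$; as every nonzero $G$-submodule contains a ${\goth b}$-eigenvector (a highest weight vector), we have $W=\ex k{\goth g}$ if and only if $V_{k,{\goth p}}^{\perp}$ contains no nonzero highest weight vector. Because ${\goth d}$ is Killing-orthogonal to $E_{{\goth p}}={\goth p}_{-,\u}\oplus{\goth z}\oplus{\goth p}_{\u}$ and the simple factors ${\goth d}_{1},\dots,{\goth d}_{\n}$ are pairwise orthogonal, the decomposition $\ex k{\goth g}=\bigoplus \ex {a_{0}}{E_{\goth p}}\wedge\ex{a_{1}}{{\goth d}_{1}}\wedge\cdots\wedge\ex{a_{\n}}{{\goth d}_{\n}}$, summed over $a_{0}+a_{1}+\cdots+a_{\n}=k$, is orthogonal, and $V_{k,{\goth p}}^{\perp}$ is precisely the sum of those summands for which $a_{l}>n_{l}$ for at least one $l$.

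Next I would exploit that ${\goth h}$ and each ${\goth d}_{l}$ preserve this multidegree. Let $v=\sum_{\mathbf a}v_{\mathbf a}$ be a nonzero highest weight vector in $V_{k,{\goth p}}^{\perp}$, with $v_{\mathbf a}$ in the summand of multidegree $\mathbf a=(a_{0};a_{1},\dots,a_{\n})$; since ${\goth h}\subset{\goth l}$, the $v_{\mathbf a}$ share the dominant weight $\mu$ of $v$. For a simple root $\alpha$ of ${\goth l}$ the root vector $e_{\alpha}$ lies in some ${\goth d}_{l}\subset{\goth l}$ and hence preserves each multidegree, so $0=e_{\alpha}v=\sum_{\mathbf a}e_{\alpha}v_{\mathbf a}$ forces $e_{\alpha}v_{\mathbf a}=0$ for every $\mathbf a$. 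Thus each nonzero $v_{\mathbf a}$ is an ${\goth l}$-highest weight vector of weight $\mu$; in particular the restriction of $\mu$ to the Cartan subalgebra of ${\goth d}_{l}$ is ${\goth d}_{l}$-dominant and occurs as a weight of $\ex{a_{0}}{E_{\goth p}}\otimes\ex{a_{l}}{{\goth d}_{l}}$.

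The heart of the argument — and the step I expect to be the main obstacle — is to derive a contradiction from the presence of an over-full factor. Fix $\mathbf a$ with $v_{\mathbf a}\neq0$ and some $a_{l}>n_{l}=\dim({\goth d}_{l}\cap{\goth u})$. Wedging strictly more than $n_{l}=|\Phi^{+}({\goth d}_{l})|$ vectors of ${\goth d}_{l}$ forces the ${\goth d}_{l}$-dominant weights available in $\ex{a_{l}}{{\goth d}_{l}}$ to sit near $2\rho_{{\goth d}_{l}}=\sum_{\beta\in\Phi^{+}({\goth d}_{l})}\beta$, i.e. to be ${\goth d}_{l}$-regular and as large as possible. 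Since ${\goth g}$ is simple its Dynkin diagram is connected, so some simple root $\alpha\notin\Delta_{{\goth l}}$ is adjacent to the nodes of ${\goth d}_{l}$; equivalently ${\goth d}_{l}$ acts nontrivially on ${\goth p}_{\u}$ and $[{\goth p}_{-,\u},{\goth p}_{\u}]$ surjects onto ${\goth d}_{l}$. The operator $e_{\alpha}$ carries one ${\goth p}_{-,\u}$-factor into ${\goth d}$, hence lowers the $E_{\goth p}$-degree; from $e_{\alpha}v=0$ I would isolate the part of lowest $E_{\goth p}$-degree and read off the resulting relation among the extremal components, showing that the largeness of the ${\goth d}_{l}$-weight is incompatible with $\mu$ being ${\goth g}$-dominant. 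This is exactly where the hypothesis $k\le n$ enters, guaranteeing enough room in $E_{\goth p}$ for the degree shift. This weight incompatibility is the crux, and it requires the careful analysis of the extremal weights of $\ex{a_{l}}{{\goth d}_{l}}$ for $a_{l}>n_{l}$.

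An alternative, constructive route is to prove directly that $W$ contains every summand $\ex{a_{0}}{E_{\goth p}}\wedge\ex{a_{1}}{{\goth d}_{1}}\wedge\cdots$ by downward induction on the total excess $\sum_{l}\max(a_{l}-n_{l},0)$: starting from an admissible element of one lower degree in ${\goth d}_{l}$ and one higher degree in $E_{\goth p}$, one applies a root vector $e_{-\beta}\in{\goth p}_{-,\u}$ with $[{\goth p}_{-,\u},{\goth p}_{\u}]$ meeting ${\goth d}_{l}$ to convert an $E_{\goth p}$-factor into a ${\goth d}_{l}$-factor, while all other terms of $e_{-\beta}\cdot(\,\cdot\,)$ lie in strictly smaller excess and hence in $W$. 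This works cleanly when ${\goth d}$ is simple (maximal parabolics), and one tries to reduce the general case to fewer simple factors by enlarging $E_{\goth p}$ to $E_{\goth p}\oplus{\goth d}_{\n}$; the difficulty is controlling the cross terms that feed a still-constrained factor ${\goth d}_{l'}$, together with the degenerate targets in which $\ex{a_{0}}{E_{\goth p}}$ is already top-dimensional so no $E_{\goth p}$-factor can be spent, which is why I would ultimately favour the orthogonality argument above.
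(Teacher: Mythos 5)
Both routes you sketch stop exactly where the paper's real work begins, and the first rests on an incorrect mechanism. Your orthogonality set-up is sound and matches Lemma~\ref{loc1} of the paper: the multidegree decomposition is orthogonal and $V_{k,{\goth p}}^{\perp}$ is the sum of the components with some $a_{l}>n_{l}$. But the reduction to ``$V_{k,{\goth p}}^{\perp}$ contains no nonzero highest weight vector'' is only a sufficient condition (the ``only if'' is false, since $V_{k,{\goth p}}^{\perp}$ is not a ${\goth b}$- or ${\goth b}_{-}$-submodule --- acting by ${\goth p}_{\u}$ converts $E_{{\goth p}}$-factors into ${\goth d}_{l}$-factors --- so a highest weight vector lying in it need not generate a $G$-module contained in it), and nothing in the proposal shows this sufficient condition actually holds. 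Worse, the claimed crux is wrong: the ${\goth d}_{l}$-dominant weights of $\ex {a_{l}}{{\goth d}_{l}}$ for $a_{l}>n_{l}$ are \emph{not} forced to be large or regular. By the self-duality $\ex a{{\goth d}_{l}}\simeq \ex {\dim {\goth d}_{l}-a}{{\goth d}_{l}}$ they coincide with the dominant weights of the complementary degree; for instance $\ex {\dim {\goth d}_{l}}{{\goth d}_{l}}$ is trivial of weight $0$ (already visible for ${\goth d}_{l}=\mathfrak{sl}_{2}$, $n_{l}=1$, $a_{l}=3$). So no contradiction with dominance of $\mu $ can be extracted from ``largeness'' of the ${\goth d}_{l}$-weight, and the first route collapses at its declared heart.

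Your second route is essentially the paper's actual strategy, but each difficulty you flag is genuine and none is resolved. (a) The conversion step (trading $\ex {k-i}{{\goth p}_{-,\u}}$ for $\ex {k-i-1}{{\goth p}_{-,\u}}\wedge {\goth d}$ inside a $P_{\u}$-module) is Proposition~\ref{pau2}; proving it occupies all of Sections~\ref{oc} and~\ref{au}, including the identification of the largest $G$-submodule of ${\goth p}_{-}\wedge \ex {k-1}{{\goth g}}$ and the explicit computations with the elements $\omega _{\alpha }$, $\omega '_{\alpha }$ in the case $X=\Pi \setminus \{\beta \}$. (b) The ``degenerate targets'' in which $\ex {a_{0}}{E_{{\goth p}}}$ is top-dimensional are exactly the hard cases; whether they occur is governed by the type-by-type inequalities $2d+n_{i}>n$ of Proposition~\ref{prs}, and when they do occur (classical type with $2d+n_{1}\leq n$) a separate argument, Lemma~\ref{lpt3} and Proposition~\ref{ppt3}, is required. (c) The passage from maximal parabolics to general ${\goth p}$ is not achieved by absorbing ${\goth d}_{\n}$ into $E_{{\goth p}}$ but by a double induction on the rank and along a chain of connected subsets of $\Pi $ growing from a connected set whose extremities lie in $X$ (Lemma~\ref{l2pt4} and Proposition~\ref{ppt4}). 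As it stands the proposal identifies the right tools but proves none of the statements on which the theorem actually depends.
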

                                   
This result arises from the study of the commuting variety of ${\goth g}$
(see \cite{Ch}). One of the main step of the proof is to consider the orthogonal
complements to some subspaces of $\ex k{{\goth g}}$ in $\ex k{{\goth g}}$ with respect to
the canonical extension of the Killing form of ${\goth g}$ to $\ex k{{\goth g}}$.

\subsection{Notations}\label{int2}
$\bullet$
Let $\k^{*} := \k\setminus \{0\}$. For $E$ a finite set, its cardinality is denoted by
$\vert E \vert$. For $k,m$ positive integers, set:
$${\Bbb N}_{k}^{m} := \{(\poi j1{,\ldots,}{m}{}{}{})\in {\Bbb N}^{m} \; \vert \;
\poi j1{+\cdots +}{m}{}{}{} = k\}.$$
As usual, for $i=(\poi i1{,\ldots,}{m}{}{}{})$ in ${\Bbb N}^{m}$,
$$ \vert i \vert := \poi i1{+\cdots +}{m}{}{}{} .$$

$\bullet$ For $V$ vector space, denote by $\ex {}V$ the exterior algebra of $V$. This
algebra has a natural gradation. For $i$ integer, denote by $\ex iV$ the space of degree
$i$ of $\ex {}V$. In particular, for $i$ negative, $\ex iV$ is equal to $\{0\}$. As
${\goth g}$ is a $G$-module for the adjoint action, so is $\ex i{{\goth g}}$ for all
$i$.

\begin{lemma}\label{lint}
Let $A$ be a subgroup of $G$, $k$ a positive integer, $i$ a positive integer smaller 
than $k$, $V$ a subspace of $\ex i{{\goth g}}$ and $W$ the $A$-submodule of 
$\ex i{{\goth g}}$ generated by $V$. Then, for all $A$-submodule $W'$ of 
$\ex {k-i}{{\goth g}}$, $W\wedge W'$ is the $A$-submodule of $\ex k{{\goth g}}$ generated
by $V\wedge W'$.
\end{lemma}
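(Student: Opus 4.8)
The plan is to prove the two inclusions separately. Throughout I would write $\langle S\rangle _A$ for the $A$-submodule of a given $G$-module generated by a subset $S$; since the action is linear, $\langle S\rangle _A$ is precisely the linear span of $\{a\cdot s \;\vert\; a\in A,\ s\in S\}$. The one structural fact I would use repeatedly is that $A\subseteq G$ acts on $\ex {}{{\goth g}}$ by automorphisms of graded algebras: the adjoint action on ${\goth g}$ extends to $\ex {}{{\goth g}}$ as a homomorphism of graded algebras, so that $a\cdot (x\wedge y) = (a\cdot x)\wedge (a\cdot y)$ for all $a\in A$ and all homogeneous $x,y$.

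For the inclusion $\langle V\wedge W'\rangle _A\subseteq W\wedge W'$, I would first note that $W\wedge W'$ is itself an $A$-submodule of $\ex k{{\goth g}}$: for $a\in A$, $w\in W$, $w'\in W'$, the identity $a\cdot (w\wedge w')=(a\cdot w)\wedge (a\cdot w')$ lands in $W\wedge W'$ because $W$ and $W'$ are $A$-stable. As $V\subseteq W$ gives $V\wedge W'\subseteq W\wedge W'$, the submodule generated by $V\wedge W'$ is contained in $W\wedge W'$. This direction is routine.

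The content lies in the reverse inclusion, and the only genuine idea is a twisting identity that moves the group element off the factor $W'$. Writing an arbitrary element of $W$ as a finite linear combination $w=\sum _j c_j\,(a_j\cdot v_j)$ with $c_j\in \k$, $a_j\in A$, $v_j\in V$, I would wedge with an arbitrary $w'\in W'$ and treat each summand. The crucial observation is that, because $W'$ is $A$-stable, $a_j^{-1}\cdot w'\in W'$, and therefore
$$(a_j\cdot v_j)\wedge w' = a_j\cdot \bigl(v_j\wedge (a_j^{-1}\cdot w')\bigr),$$
which exhibits the summand as $a_j$ acting on an element of $V\wedge W'$, hence as an element of $\langle V\wedge W'\rangle _A$. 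Summing over $j$ puts $w\wedge w'$ in $\langle V\wedge W'\rangle _A$, and since such products $w\wedge w'$ span $W\wedge W'$, the inclusion $W\wedge W'\subseteq \langle V\wedge W'\rangle _A$ follows. Combining the two inclusions gives the claim. I would remark that the degree hypothesis $0<i<k$ is not really used beyond guaranteeing the relevant spaces make sense; the statement is a general fact about wedging a generated submodule against an invariant one, valid for any group acting by algebra automorphisms on a graded algebra.
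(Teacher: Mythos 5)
Your proof is correct and follows essentially the same route as the paper: the key step in both is the twisting identity $(a\cdot v)\wedge w' = a\cdot\bigl(v\wedge(a^{-1}\cdot w')\bigr)$, applied to a finite expansion of an element of $W$ as a combination of $A$-translates of elements of $V$. The paper merely states this more tersely, leaving the easy inclusion implicit.
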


\begin{proof}
Let $W''$ be the $A$-submodule of $\ex k{{\goth g}}$ generated by 
$V\wedge W'$. Let $\omega $ and $\omega '$ be in $W$ and $W'$ respectively. For some 
$\poi {\omega }1{,\ldots,}{m}{}{}{}$ in $V$ and $\poi g1{,\ldots,}{m}{}{}{}$ in $A$, 
$$ \omega = g_{1}.\omega _{1} + \cdots + g_{m}.\omega _{m},$$
whence 
$$ \omega \wedge \omega ' = g_{1}.(\omega _{1}\wedge g_{1}^{-1}.\omega ') + \cdots +
g_{m}.(\omega _{m}\wedge g_{m}^{-1}.\omega ')$$
and $W''= W\wedge W'$.
\end{proof}

$\bullet$
The Killing form of ${\goth g}$ is denoted by $\dv ..$. For $k$ positive integer, 
the Killing form of ${\goth g}$ has a natural extension to $\ex k{{\goth g}}$ and 
this extension is not degenerate.

$\bullet$ For ${\goth a}$ a semisimple Lie algebra, denote by $\b a{}$ the dimension of
its Borel subalgebras and $\j a{}$ its rank.

$\bullet$
Let ${\cal R}$ be the root system of ${\goth h}$ in ${\goth g}$, ${\cal R}_{+}$ the 
positive root system of ${\cal R}$ defined by ${\goth b}$ and $\Pi $ the basis of 
${\cal R}_{+}$. For $\alpha $ in ${\cal R}$, $H_{\alpha }$ is the coroot of $\alpha $,
the corresponding root subspace is denoted by ${\goth g}^{\alpha }$ and a generator
$x_{\alpha }$ of ${\goth g}^{\alpha }$ is chosen so that $\dv {x_{\alpha }}{x_{-\alpha }} = 1$. 

$\bullet$ We consider on $\Pi $ its structure of Dynkin diagram. As ${\goth g}$ is 
simple, $\Pi $ is connected and has three extremities when $\Pi $ has type
${\mathrm {D}}_{\rg}$, ${\mathrm {E}}_{6}$, ${\mathrm {E}}_{7}$, ${\mathrm {E}}_{8}$,
one extremity when $\Pi $ has type ${\mathrm {A}}_{1}$ and $2$ otherwise. The 
elements $\poi {\beta }1{,\ldots,}{\rg}{}{}{}$ of $\Pi $ are ordered as
in~\cite[Ch. VI]{Bou}.

$\bullet$
Let $X$ be a subset of $\Pi $. We denote by ${\cal R}_{X}$ the root subsystem of 
${\cal R}$ generated by $X$ and we set 
$$<X> := {\cal R}_{+}\cap {\cal R}_{X} \quad  \text{so that } \quad
{\cal R}_{X} = <X> \cup -<X> .$$
Let ${\goth p}_{X}$ be the parabolic subalgebra of ${\goth g}$,
$$ {\goth p}_{X} := {\goth b} \oplus 
\bigoplus _{\alpha \in <X>} {\goth g}^{-\alpha },$$
${\goth p}_{X,\u}$ its nilpotent radical, ${\goth l}_{X}$ the reductive factor of 
${\goth p}_{X}$ containing ${\goth h}$, ${\goth z}_{X}$ the center of ${\goth l}_{X}$, 
${\goth d}_{X}$ the derived algebra of ${\goth l}_{X}$, ${\goth p}_{X,-,\u}$ the 
complement to ${\goth p}_{X}$ in ${\goth g}$, invariant under $\ad {\goth h}$ and 
$E_{X}$ the sum of ${\goth z}_{X},{\goth p}_{X,\u},{\goth p}_{X,-,\u}$. When $X$ is empty,
${\goth p}_{X}$ is the Borel subalgebra ${\goth b}$.

$\bullet$ Let $X$ be a nonempty subset of $\Pi $ and $\poi X1{,\ldots,}{\n_{X}}{}{}{}$ 
its connected components. For $i=1,\ldots,\n_{X}$, denote by $n_{i}$ the cardinality of 
$<X_{i}>$ and ${\goth d}_{i}$ the subalgebra of ${\goth g}$ generated by 
${\goth g}^{\pm \beta }, \, \beta \in X_{i}$. Then 
$\poi {{\goth d}}1{,\ldots,}{\n_{X}}{}{}{}$ are the simple factors of ${\goth d}_{X}$. 
For $k$ positive integer, set:
$$ V_{k,{\goth p}_{X}} := \bigoplus _{j_{1}=0}^{n_{1}}\cdots 
\bigoplus _{j_{\n_{X}}=0}^{n_{\n_{X}}} \ex {j_{1}}{{\goth d}_{1}}\wedge \cdots \wedge 
\ex {j_{\n_{X}}}{{\goth d}_{\n_{X}}}\wedge 
\ex {k-\poi j1{-\cdots -}{\n_{X}}{}{}{}}{E_{X}}$$
and denote by $V_{k,X}$ the $G$-submodule of $\ex k{{\goth g}}$ generated by 
$V_{k,{\goth p}_{X}}$.

\section{Orthogonal complement} \label{oc}
Let $\rg \geq 2$ and $X$ a nonempty subset of $\Pi $. Set:
$$ {\goth p} := {\goth p}_{X}, \quad {\goth p}_{\u} := {\goth p}_{X,\u}, \quad 
{\goth l} := {\goth l}_{X}, \quad {\goth p}_{-,\u} := {\goth p}_{X,-,\u},$$
$$ {\goth p}_{\pm,\u} := {\goth p}_{\u} \oplus {\goth p}_{-,\u}, \quad
{\goth p}_{-} := {\goth l} \oplus {\goth p}_{-,\u}, \quad d := \dim {\goth p}_{\u} .$$

\subsection{General fact} \label{oc1}
Let $A$ be a subgroup of $G$. For $k$ positive integer and $W$ subspace of 
$\ex k{{\goth g}}$, denote by $W^{\perp}$ the orthogonal complement to $W$ in 
$\ex k{{\goth g}}$. As the bilinear form on $\ex k{{\goth g}}$, defined by the Killing 
form, is not degenerate,
$$ \dim W + \dim W^{\perp} = \dim \ex k{{\goth g}} .$$

\begin{lemma}\label{loc1}
Let $k$ be a positive integer smaller than $\dim {\goth g}$. Let $V$ be a subspace of 
$\ex k{{\goth g}}$. Denote by $W$ the $A$-submodule of $\ex k{{\goth g}}$ generated by 
$V$. Then $W^{\perp}$ is the biggest $A$-submodule of $\ex k{{\goth g}}$, contained in 
$V^{\perp}$.
\end{lemma}

\begin{proof}
Denote by $W^{\#}$ the biggest $A$-submodule contained in 
$V^{\perp}$. As $W$ is a $A$-module, so is $W^{\perp}$. Then $W^{\perp}$ is contained in 
$W^{\#}$. Moreover, $V$ is contained in the orthogonal complement to $W^{\#}$ in 
$\ex k{{\goth g}}$. Hence $W$ is orthogonal to $W^{\#}$ since the orthogonal 
complement to $W^{\#}$ is a $A$-module. As a result, $W^{\#}=W^{\perp}$.
\end{proof}

\subsection{Orthogonality} \label{oc2}
Let $V$ be a finite dimensional vector space with a non degenerate symmetric bilinear 
form on $V$. For $k$ positive integer, it induces a non degenerate symmetric bilinear
form on $\ex k{V}$. Let $\poi V1{,\ldots,}{m}{}{}{}$ be pairwise orhogonal
subspaces of $V$ such that $V$ is the direct sum of these subspaces. For
$i=(\poi i1{,\ldots,}{m}{}{}{})$ in ${\Bbb N}^{m}_{k}$, set:
$$ C_{i,V} := \ex {i_{1}}{V_{1}}\wedge \cdots \wedge \ex {i_{m}}{V_{m}} .$$
If $V_{m}$ is the direct sum of two isotropic subspaces $V_{m,+}$ and $V_{m,-}$, 
for $i=(\poi i1{,\ldots,}{m+1}{}{}{})$ in ${\Bbb N}^{m+1}_{k}$, set:
$$ i^{*} := (\poi i1{,\ldots,}{m-1}{}{}{},i_{m+1},i_{m}) \quad  \text{and} $$
$$ C'_{i,V} := \ex {i_{1}}{V_{1}}\wedge \cdots \wedge \ex {i_{m-1}}{V_{m-1}}\wedge 
\ex {i_{m}}{V_{m,+}}\wedge \ex {i_{m+1}}{V_{m,-}}.$$

\begin{lemma}\label{loc2}
Let $k$ be a positive integer.

{\rm (i)} For $i,i'$ in ${\Bbb N}^{m}_{k}$, if $i\neq i'$ then $C_{i,V}$ is orthogonal
to $C_{i',V}$.

{\rm (ii)} Suppose that $V_{m}$ is the direct sum of two isotropic subspaces $V_{m,+}$
and $V_{m,-}$. For $i,i'$ in ${\Bbb N}^{m+1}_{k}$, if $i'\neq i^{*}$ then $C'_{i,V}$ is
orthogonal to $C'_{i',V}$.
\end{lemma}

\begin{proof}
Denote by $\dv ..$ the symmetric bilinear form on $V$ and $\ex kV$. As 
$\poi V1{,\ldots,}{m}{}{}{}$ are pairwise orthogonal and $V$ is the direct sum of these 
subspaces, for $i=1,\ldots,m$, the restriction to $V_{i}\times V_{i}$ of $\dv ..$ is non 
degenerate. For $j=1,\ldots,m$, let $n_{j}$ be the dimension of the sum
$$ \poi V1{\oplus \cdots \oplus}{j}{}{}{}$$ and 
$\poi v1{,\ldots,}{n_{m}}{}{}{}$ an orthonormal basis of $V$ such that 
$\{\poi v1{,\ldots,}{n_{j}}{}{}{}\}$ is contained in the union of
$\poi V1{,\ldots,}{j}{}{}{}$ for $j=1,\ldots,m$.

(i) Let $i$ and $i'$ be in ${\Bbb N}^{m}_{k}$ such that $i\neq i'$. If $k>n_{m}$, there 
is nothing to prove. Suppose $k\leq n_{m}$. For 
$j = \poi j1{,\ldots,}{k}{}{}{}$ in $\{1,\ldots,n_{m}\}$ such that 
$1\leq \poi j1{< \cdots <}{k}{}{}{}\leq n_{m}$, set:
$$ w_{j} := \poi v{j_{1}}{\wedge \cdots \wedge }{j_{k}}{}{}{} .$$
Setting $n_{0} := 0$, $w_{j}$ is in $C_{i,V}$ if and only if
$$  \vert \{l \in \{1,\ldots,k\} \; \vert \; n_{s-1}+1\leq j_{l} \leq n_{s}\} \vert = 
i_{s} $$
for $s=1,\ldots,m$. Denote by $I_{i}$ the set of $j$ satisfying this condition so that 
$w_{j}, \, j\in I_{i}$ is a basis of $C_{i,V}$.

Let $(j,j')$ be in $I_{i}\times I_{i'}$. By definition, 
$$ \dv {w_{j}}{w_{j'}} = \det ( \dv {v_{j_{l}}}{v_{j'_{l'}}}, \; 1\leq l,l'\leq k) .$$
As the basis $\poi v1{,\ldots,}{n_{m'}}{}{}{}$ is orthonormal, 
$$ \dv {v_{j_{l}}}{v_{j'_{l'}}} = \delta _{j_{l},j'_{l'}}$$
with $\delta _{s,s'}$ the Kronecker symbol. As a result, if all the lines of the above 
matix are all different from $0$ then
$$ \vert \{l \in \{1,\ldots,k\} \; \vert \; n_{s-1}+1\leq j_{l} \leq n_{s}\} \vert = 
\vert \{l \in \{1,\ldots,k\} \; \vert \; n_{s-1}+1\leq j'_{l} \leq n_{s}\} \vert $$
for $s=1,\ldots,m$ since $\poi V1{,\ldots,}{m}{}{}{}$ are pairwise orthogonal. Then
$\dv {w_{j}}{w_{j'}} = 0$ since $i\neq i'$, whence the assertion.

(ii) Let $i$ and $i'$ be in ${\Bbb N}^{m+1}_{k}$ such that $i'\neq i^{*}$. By (i), 
we can suppose that $i_{s}=i'_{s}$ for $s=1,\ldots,m-1$. Since $V_{m,+}$ and 
$V_{m,-}$ are isotropic, they have the same dimension $m_{0}$ and $V_{m}$ has a 
basis $\poi u1{,\ldots,}{2m_{0}}{}{}{}$ such that 
$$ \{\poi u1{,\ldots,}{m_{0}}{}{}{}\} \subset V_{m,+}, \quad
\{\poi u{m_{0}+1}{,\ldots,}{2m_{0}}{}{}{}\} \subset V_{m,-}, \quad 
\dv {u_{s}}{u_{s'+m_{0}}} = \delta _{s,s'}$$
for $1\leq s,s'\leq m_{0}$. Let $\poie v1{,\ldots,}{{n_{m}}}{}{}{}{\prime}{\prime}$ be 
the basis of $V$ such that $v'_{l}=v_{l}$ for $l=1,\ldots,n_{m-1}$ and  
$v'_{l'}=u_{l'-n_{m-1}}$ for $l'=n_{m-1}+1,\ldots,n_{m}$. For 
$j=\poi j1{,\ldots,}{k}{}{}{}$ in $\{1,\ldots,n_{m}\}$ such that 
$1\leq \poi j1{<\cdots <}{k}{}{}{}\leq n_{m}$, set:
$$ w'_{j} = v'_{j_{1}}\wedge \cdots \wedge v'_{j_{k}} .$$
Then $w'_{j}$ is in $C'_{i,V}$ if and only if
$$ \vert \{l \in \{1,\ldots,k\} \; \vert \; n_{m-1}+1\leq j_{l} \leq n_{m-1}+m_{0}\}\vert
= i_{m} , \quad 
\vert \{l \in \{1,\ldots,k\} \; \vert \; n_{m-1}+m_{0}+1\leq j_{l} \leq n_{m}\} \vert = 
i_{m+1} , \quad$$
$$ \vert \{l \in \{1,\ldots,k\} \; \vert \; n_{s-1}+1\leq j_{l} \leq n_{s}\} \vert = 
i_{s} $$
for $s=1,\ldots,m-1$. Denote by $I_{i}$ the set of $j$ satisfying this condition so that 
$w'_{j}, \, j\in I_{i}$ is a basis of $C'_{i,V}$.

Let $(j,j')$ be in $I_{i}\times I_{i'}$. By definition, 
$$ \dv {w'_{j}}{w'_{j'}} = \det ( \dv {v'_{j_{l}}}{v'_{j'_{l'}}}, \; 1\leq l,l'\leq k) .$$
Then 
$$ j_{l} \leq n_{m-1} \Longrightarrow 
\dv {v'_{j_{l}}}{v'_{j'_{l'}}} = \delta _{j_{j},j'_{l'}},$$
$$ j_{l} > n_{m-1} \quad  \text{and} \quad j'_{l'} > n_{m-1} \Longrightarrow 
\dv {v'_{j_{l}}}{v'_{j'_{l'}}} = \delta _{\vert j_{j}-j'_{l'}\vert,m_{0}} .$$
As a result, if all the lines of the above matix are all different from $0$ then
$$ \vert \{l \in \{1,\ldots,k\} \; \vert \; n_{m-1}+1\leq j_{l} \leq n_{m-1}+m_{0}\} 
\vert = 
\vert \{l \in \{1,\ldots,k\} \; \vert \; n_{m-1}+m_{0}+1\leq j'_{l} \leq n_{m}\} \vert $$
since $i_{s}=i'_{s}$ for $s\leq m-1$ and $\poi V1{,\ldots,}{m}{}{}{}$ are pairwise 
orthogonal. Then $\dv {w_{j}}{w_{j'}} = 0$ since $i'\neq i^{*}$, whence the 
assertion.
\end{proof}

For $i = (i_{1},i_{2},i_{3})$ in ${\Bbb N}^{3}$, set:
$$ C_{i} := \ex {i_{1}}{{\goth l}}\wedge \ex {i_{2}}{{\goth p}_{\u}}\wedge 
\ex {i_{3}}{{\goth p}_{-,\u}} $$
and denote by $i^{*}$ the element $(i_{1},i_{3},i_{2})$ of ${\Bbb N}^{3}$.

\begin{coro}\label{coc2}
Let $k$ be a positive integer.

{\rm (i)} For $i,i'$ in ${\Bbb N}^{3}_{k}$, $C_{i}$ is orthogonal to $C_{i'}$ if 
$i^{*}\neq i'$.

{\rm (ii)} For $i$ in ${\Bbb N}^{3}_{k}$, the orthogonal complement to $C_{i}$ in 
$\ex k{{\goth g}}$ is equal to
$$ \bigoplus _{i'\in {\Bbb N}^{3}_{k}\setminus \{i^{*}\}} C_{i'} .$$
\end{coro}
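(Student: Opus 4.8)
The plan is to deduce Corollary~\ref{coc2} from Lemma~\ref{loc2} in the case $m=2$, together with an elementary dimension count. First I would take $V := {\goth g}$ equipped with the extension of the Killing form, and set $V_{1} := {\goth l}$ and $V_{2} := {\goth p}_{\pm,\u} = {\goth p}_{\u}\oplus {\goth p}_{-,\u}$, so that ${\goth g} = V_{1}\oplus V_{2}$. To apply Lemma~\ref{loc2}\,(ii) I must check that $V_{1}$ and $V_{2}$ are orthogonal and that $V_{2}$ is the direct sum of the two isotropic subspaces $V_{2,+} := {\goth p}_{\u}$ and $V_{2,-} := {\goth p}_{-,\u}$. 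Both facts come from the root-space description: the Killing form pairs ${\goth g}^{\alpha}$ with ${\goth g}^{\beta}$ nontrivially only when $\alpha +\beta = 0$, and ${\goth h}$ is orthogonal to every root space. Since ${\goth l}$ is spanned by ${\goth h}$ and the root spaces for roots in ${\cal R}_{X}$, while ${\goth p}_{\pm,\u}$ is spanned by the root spaces for $\pm({\cal R}_{+}\setminus <X>)$, no pair of opposite roots straddles $V_{1}$ and $V_{2}$, giving orthogonality; the same remark shows each of ${\goth p}_{\u},{\goth p}_{-,\u}$ is isotropic. In particular ${\goth p}_{\u}$ and ${\goth p}_{-,\u}$ are dual under the form and share the common dimension $d$.

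With this choice, for $i=(i_{1},i_{2},i_{3})$ in ${\Bbb N}^{3}_{k}$ the subspace $C'_{i,V}$ of Lemma~\ref{loc2}\,(ii) is precisely $C_{i} = \ex {i_{1}}{{\goth l}}\wedge \ex {i_{2}}{{\goth p}_{\u}}\wedge \ex {i_{3}}{{\goth p}_{-,\u}}$, and the element $i^{*}$ produced by the lemma (which reads $(i_{1},i_{3},i_{2})$ when $m=2$) coincides with the $i^{*}$ fixed just before the corollary. Hence part~(i) is immediate: if $i^{*}\neq i'$, then Lemma~\ref{loc2}\,(ii) asserts that $C_{i}=C'_{i,V}$ is orthogonal to $C_{i'}=C'_{i',V}$.

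For part~(ii) I would combine (i) with a dimension count. Because ${\goth g}={\goth l}\oplus {\goth p}_{\u}\oplus {\goth p}_{-,\u}$, the exterior algebra decomposes as $\ex k{{\goth g}} = \bigoplus _{i'\in {\Bbb N}^{3}_{k}} C_{i'}$. By~(i) every summand $C_{i'}$ with $i'\neq i^{*}$ lies in $C_{i}^{\perp}$, so $\bigoplus _{i'\neq i^{*}} C_{i'}\subseteq C_{i}^{\perp}$. To promote this inclusion to an equality I compare dimensions. Nondegeneracy of the form on $\ex k{{\goth g}}$ (see~\S\ref{oc1}) gives $\dim C_{i}^{\perp} = \dim \ex k{{\goth g}} - \dim C_{i}$, while the product formula $\dim C_{i} = \bin{\dim {\goth l}}{i_{1}}\bin d{i_{2}}\bin d{i_{3}}$ is symmetric in $i_{2}$ and $i_{3}$ since $\dim {\goth p}_{\u} = \dim {\goth p}_{-,\u} = d$; hence $\dim C_{i} = \dim C_{i^{*}}$. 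Therefore $\dim C_{i}^{\perp} = \dim \ex k{{\goth g}} - \dim C_{i^{*}} = \sum _{i'\neq i^{*}} \dim C_{i'}$, and the inclusion above is an equality.

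The only genuine content is the verification that $V_{1},V_{2}$ meet the orthogonality and isotropy hypotheses of Lemma~\ref{loc2}; I expect this to be the main---though only mildly technical---obstacle, and it is a standard feature of parabolic subalgebras. Everything else is bookkeeping: matching the two descriptions of $i^{*}$, and noting that the isotropy of ${\goth p}_{\u}$ and ${\goth p}_{-,\u}$ forces $\dim C_{i}=\dim C_{i^{*}}$, which is exactly what makes the dimension count close.
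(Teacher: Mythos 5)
Your proposal is correct and follows essentially the same route as the paper: part (i) is obtained by applying Lemma~\ref{loc2}(ii) with $V_{1}={\goth l}$ and $V_{2}={\goth p}_{\pm,\u}={\goth p}_{\u}\oplus{\goth p}_{-,\u}$, and part (ii) follows from the decomposition of $\ex k{{\goth g}}$ into the $C_{i'}$ together with (i). You merely make explicit two points the paper leaves implicit, namely the root-space verification of the orthogonality and isotropy hypotheses and the dimension count $\dim C_{i}=\dim C_{i^{*}}$ that upgrades the inclusion in (ii) to an equality; both are correct and welcome additions.
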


\begin{proof}
(i) Let $i$ and $i'$ be in ${\Bbb N}^{3}_{k}$ such that $i^{*}\neq i'$. By 
Lemma~\ref{loc1}(ii), $C_{i}$ is orthogonal to $C_{i'}$ since ${\goth l}$ and 
${\goth p}_{\pm,\u}$ are orthogonal and ${\goth p}_{\u}$ and ${\goth p}_{-,\u}$ are 
isotropic. 

(ii) Since $\ex k{{\goth g}}$ is the direct sum of 
$C_{i'}, \, i'\in {\Bbb N}^{3}_{k}$, 
the orthogonal complement to $C_{i}$ in $\ex k{{\goth g}}$ is the direct sum of 
$C_{i'}, \, i' \in {\Bbb N}^{\n+3}_{k}\setminus \{i^{*}\}$ by (i).
\end{proof}

\begin{coro}\label{c2oc2}
Let $k=1,\ldots,d$.

{\rm (i)} The orthogonal complement to $\ex k{{\goth p}_{-,\u}}$ in $\ex k{{\goth g}}$ is 
equal to ${\goth p}_{-}\wedge \ex {k-1}{{\goth g}}$.

{\rm (ii)} The orthogonal complement to $\ex k{{\goth p}_{\pm,\u}}$ in $\ex k{{\goth g}}$
is equal to ${\goth l}\wedge \ex {k-1}{{\goth g}}$.
\end{coro}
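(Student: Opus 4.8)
The plan is to compute both orthogonal complements explicitly as sums of the spaces $C_{i}$, exploiting the direct sum decomposition $\ex k{{\goth g}} = \bigoplus _{i\in {\Bbb N}^{3}_{k}} C_{i}$ coming from ${\goth g} = {\goth l}\oplus {\goth p}_{\u}\oplus {\goth p}_{-,\u}$, and then to identify the right-hand sides of (i) and (ii) with these sums. The basic tool is the identity $\ex m{V} = V\wedge \ex {m-1}{V}$ for $m\geq 1$, applied in the first factor: for $i'\in {\Bbb N}^{3}_{k}$ with $i'_{1}\geq 1$ one has $C_{i'} = {\goth l}\wedge C_{i'-(1,0,0)}$, and letting $i'-(1,0,0)$ run over ${\Bbb N}^{3}_{k-1}$ gives
$$ {\goth l}\wedge \ex {k-1}{{\goth g}} = \bigoplus _{i'_{1}\geq 1} C_{i'}. $$
The same argument in the third factor yields ${\goth p}_{-,\u}\wedge \ex {k-1}{{\goth g}} = \bigoplus _{i'_{3}\geq 1} C_{i'}$, so by bilinearity of the wedge and ${\goth p}_{-} = {\goth l}\oplus {\goth p}_{-,\u}$ we get ${\goth p}_{-}\wedge \ex {k-1}{{\goth g}} = \bigoplus _{i'_{1}\geq 1 \text{ or } i'_{3}\geq 1} C_{i'}$.

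For (i) I would note that $\ex k{{\goth p}_{-,\u}} = C_{(0,0,k)}$ and $(0,0,k)^{*} = (0,k,0)$, so Corollary~\ref{coc2}(ii) gives its orthogonal complement as $\bigoplus _{i'\neq (0,k,0)} C_{i'}$. Since $k\leq d = \dim {\goth p}_{\u}$, the index $(0,k,0)$ is genuine, and it is the unique $i'\in {\Bbb N}^{3}_{k}$ with $i'_{1}=i'_{3}=0$; hence this sum coincides with the expression for ${\goth p}_{-}\wedge \ex {k-1}{{\goth g}}$ obtained above, proving (i).

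For (ii) I would write $\ex k{{\goth p}_{\pm,\u}} = \bigoplus _{i_{2}+i_{3}=k} C_{(0,i_{2},i_{3})}$ and use that the orthogonal complement of a sum is the intersection of the orthogonal complements. By Corollary~\ref{coc2}(ii) the complement of $C_{(0,i_{2},i_{3})}$ is the sum of all summands except the one indexed by $(0,i_{3},i_{2})$; as $(i_{2},i_{3})$ ranges over all pairs with $i_{2}+i_{3}=k$, these excluded indices $(0,i_{3},i_{2})$ range over exactly the set of $i'$ with $i'_{1}=0$. The intersection therefore deletes precisely the summands with $i'_{1}=0$ and leaves $\bigoplus _{i'_{1}\geq 1} C_{i'} = {\goth l}\wedge \ex {k-1}{{\goth g}}$, proving (ii). The step demanding the most care is this last intersection: one must check that the swept-out family $\{(0,i_{3},i_{2}) : i_{2}+i_{3}=k\}$ is exactly the family of indices with vanishing first coordinate, so that no summand with $i'_{1}\geq 1$ is removed and every summand with $i'_{1}=0$ is.
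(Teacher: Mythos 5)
Your proposal is correct and follows essentially the same route as the paper: decompose $\ex k{{\goth g}}$ into the summands $C_{i}$, apply Corollary~\ref{coc2} to compute the orthogonal complement of $\ex k{{\goth p}_{-,\u}}=C_{(0,0,k)}$ and of $\ex k{{\goth p}_{\pm,\u}}=\bigoplus_{i_{1}=0}C_{i}$, and match the resulting index sets $\{i'_{1}>0 \text{ or } i'_{3}>0\}$ and $\{i'_{1}>0\}$ with ${\goth p}_{-}\wedge\ex{k-1}{{\goth g}}$ and ${\goth l}\wedge\ex{k-1}{{\goth g}}$. The only difference is cosmetic: you spell out the identification ${\goth l}\wedge\ex{k-1}{{\goth g}}=\bigoplus_{i'_{1}\geq 1}C_{i'}$ (and its analogues), which the paper leaves implicit.
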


\begin{proof}
(i)  Let $I_{1}$ be the subset of ${\Bbb N}^{3}_{k}$,
$$ I_{1} :=\{(i_{1},i_{2},i_{3}) \in {\Bbb N}^{3}_{k} \; \vert \; i_{1}=i_{2}=0\} .$$
The complement to $I_{1}^{*}$ in ${\Bbb N}_{k}^{3}$ is equal to 
$$ \{(i_{1},i_{2},i_{3}) \in {\Bbb N}^{3}_{k} \; \vert \; i_{1}>0 \quad  \text{or} \quad
i_{3}>0\} ,$$
whence the assertion by Corollary~\ref{coc2} since 
$$ \ex k{{\goth p}_{-,\u}} = \bigoplus _{i\in I_{1}} C_{i} .$$

(ii) Let $I_{2}$ be the subset of ${\Bbb N}^{3}_{k}$,
$$ I_{2} :=\{(i_{1},i_{2},i_{3}) \in {\Bbb N}^{3}_{k} \; \vert \; i_{1}=0\} .$$
The complement to $I_{2}^{*}$ in ${\Bbb N}_{k}^{3}$ is equal to 
$$ \{(i_{1},i_{2},i_{3}) \in {\Bbb N}^{3}_{k} \; \vert \; i_{1}>0\} ,$$
whence the assertion by Corollary~\ref{coc2} since $\ex k{{\goth p}_{\pm,\u}}$ is the 
sum of $C_{i}, \, i \in I_{2}$.
\end{proof}

\section{Action of the unipotent radical of a parabolic subgroup}\label{au}
Let $\rg \geq 2$ and $X$ a subset of $\Pi $. Set: 
$$ {\goth p} := {\goth p}_{X}, \quad {\goth p}_{\u} := {\goth p}_{X,\u}, \quad 
{\goth l} := {\goth l}_{X}, \quad {\goth p}_{-,\u} := {\goth p}_{X,-,\u}, $$
$$ {\goth p}_{\pm,\u} := {\goth p}_{\u} \oplus {\goth p}_{-,\u} , \quad
{\goth p}_{-} := {\goth l}\oplus {\goth p}_{-,\u}, \quad d := \dim {\goth p}_{\u} .$$
Denote by $L$ and $H$ the connected closed subgroups of $G$ whose Lie algebras are
${\goth l}$ and ${\goth h}$ respectively. Let $P$ and $P_{-}$ be the normalizers of
${\goth p}$ and ${\goth p}_{-}$ in $G$ and $P_{\u}$ and $P_{-,\u}$ their unipotent
radicals. 

\subsection{Invariant subspaces} \label{au1}
Let $k=1,\ldots,d$, $W_{k}$ the biggest $P_{\u}$-submodule of $\ex k{{\goth g}}$ contained
in ${\goth p}_{-}\wedge \ex {k-1}{{\goth g}}$ and $V_{k,\u}$ the $P_{\u}$-submodule of
$\ex k{{\goth g}}$ generated by $\ex {k}{{\goth p}_{-,\u}}$.

\begin{lemma}\label{lau1}
  Let $W_{k,0}$ be the subspace of elements of
  ${\goth p}_{-}\wedge \ex {k-1}{{\goth g}}$ invariant under ${\goth u}$.

{\rm (i)} The subspace $W_{k}$ of $\ex k{{\goth g}}$ is invariant under ${\goth u}$.

{\rm (ii)} The subspace $W_{k,0}$ of $\ex k{{\goth g}}$ is contained in $W_{k}$ and 
generated by highest weight vectors.

{\rm (iii)} The subspace $\ec Uu{}-{}.W_{k,0}$ of $\ex k{{\goth g}}$ is the 
biggest $G$-submodule of $\ex k{{\goth g}}$ contained in 
${\goth p}_{-}\wedge \ex {k-1}{{\goth g}}$.
\end{lemma}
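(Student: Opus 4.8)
The plan is to begin by identifying $W_{k}$ with an orthogonal complement. By Corollary~\ref{c2oc2}(i) one has ${\goth p}_{-}\wedge \ex {k-1}{{\goth g}} = (\ex k{{\goth p}_{-,\u}})^{\perp}$, and applying Lemma~\ref{loc1} with $A=P_{\u}$ and $V=\ex k{{\goth p}_{-,\u}}$ shows that $V_{k,\u}^{\perp}$ is the biggest $P_{\u}$-submodule of $\ex k{{\goth g}}$ contained in $(\ex k{{\goth p}_{-,\u}})^{\perp} = {\goth p}_{-}\wedge \ex {k-1}{{\goth g}}$. Thus $W_{k}=V_{k,\u}^{\perp}$. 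For (i) I would then argue that $V_{k,\u}$ is stable under all of $P$. Indeed $\ex k{{\goth p}_{-,\u}}$ is $L$-stable, because ${\goth p}_{-,\u}$ is the nilradical of the opposite parabolic ${\goth p}_{-}$ and hence an $\ad {\goth l}$-module; since $L$ normalizes $P_{\u}$ (Levi decomposition $P=L\ltimes P_{\u}$), the $P_{\u}$-module $V_{k,\u}$ generated by an $L$-stable subspace is again $L$-stable, and therefore $P$-stable. As $P\subseteq G$ preserves the extension of the Killing form, $W_{k}=V_{k,\u}^{\perp}$ is $P$-stable too; in particular it is stable under ${\goth b}\supseteq {\goth u}$, which gives (i).

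For (ii), observe that any $\xi\in W_{k,0}$ is annihilated by ${\goth u}$, hence by ${\goth p}_{\u}\subseteq {\goth u}$, so it is fixed by the unipotent group $P_{\u}$. Thus $W_{k,0}$ is a trivial $P_{\u}$-submodule of ${\goth p}_{-}\wedge \ex {k-1}{{\goth g}}$ and is therefore contained in the biggest such submodule $W_{k}$. Moreover ${\goth p}_{-}\wedge \ex {k-1}{{\goth g}}$ is $\ad {\goth h}$-stable, and since $[{\goth h},{\goth u}]\subseteq {\goth u}$ the subspace $W_{k,0}$ of ${\goth u}$-invariants is itself $\ad {\goth h}$-stable; it thus splits into ${\goth h}$-weight spaces, each consisting of vectors annihilated by ${\goth u}$, that is, of highest weight vectors, proving (ii).

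For (iii), write $M$ for the biggest $G$-submodule of $\ex k{{\goth g}}$ contained in ${\goth p}_{-}\wedge \ex {k-1}{{\goth g}}$ (the sum of all such submodules). Since $M$ is a finite dimensional ${\goth g}$-module it is semisimple and generated by its highest weight vectors $M^{{\goth u}}$, and by the Poincar\'e--Birkhoff--Witt theorem ${\rm U}({\goth g})v={\rm U}({\goth u}_{-})v$ for every highest weight vector $v$, where ${\goth u}_{-}$ is the nilpotent radical of the Borel subalgebra opposite to ${\goth b}$; hence $M={\rm U}({\goth u}_{-}).M^{{\goth u}}$. As $M\subseteq {\goth p}_{-}\wedge \ex {k-1}{{\goth g}}$, its highest weight vectors lie in $W_{k,0}$, so $M\subseteq {\rm U}({\goth u}_{-}).W_{k,0}$; the same PBW computation shows that ${\rm U}({\goth u}_{-}).W_{k,0}$ is a $G$-submodule. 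It then remains to prove the reverse inclusion, which reduces to ${\rm U}({\goth u}_{-}).W_{k,0}\subseteq {\goth p}_{-}\wedge \ex {k-1}{{\goth g}}=(\ex k{{\goth p}_{-,\u}})^{\perp}$.

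This last inclusion is the heart of the matter, and I expect it to be the main obstacle. I would establish it by a direct orthogonality computation: for $\xi\in W_{k,0}$, $Y\in {\rm U}({\goth u}_{-})$ and $\eta\in \ex k{{\goth p}_{-,\u}}$, the $G$-invariance of the form gives $\dv {Y.\xi}{\eta}=\dv {\xi}{Y'.\eta}$, where $Y'$ is the image of $Y$ under the principal anti-automorphism up to sign, again an element of ${\rm U}({\goth u}_{-})$. Because ${\goth p}_{-,\u}$ is stable under $\ad {\goth u}_{-}$ — one has $[{\goth u}_{-},{\goth p}_{-,\u}]\subseteq {\goth p}_{-,\u}$, since the sum of a negative root with a negative root lying outside ${\goth l}$ is again, when a root, a negative root outside ${\goth l}$ — the space $\ex k{{\goth p}_{-,\u}}$ is ${\rm U}({\goth u}_{-})$-stable, so $Y'.\eta\in \ex k{{\goth p}_{-,\u}}$. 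Since $\xi\in W_{k,0}\subseteq (\ex k{{\goth p}_{-,\u}})^{\perp}$, we obtain $\dv {Y.\xi}{\eta}=0$, hence $Y.\xi\perp \ex k{{\goth p}_{-,\u}}$ and ${\rm U}({\goth u}_{-}).W_{k,0}\subseteq (\ex k{{\goth p}_{-,\u}})^{\perp}$. This yields ${\rm U}({\goth u}_{-}).W_{k,0}\subseteq M$, and together with the inclusion of the previous paragraph gives the desired equality.
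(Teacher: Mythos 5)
Your proof is correct, and its skeleton for (ii)--(iii) --- show $W_{k,0}$ is spanned by highest weight vectors, then identify ${\rm U}({\goth u}_{-}).W_{k,0}$ with the largest $G$-submodule via the fact that any $G$-submodule is generated by its highest weight vectors --- is the same as the paper's. The differences lie in how two stability statements are obtained. For (i), the paper argues directly from the maximality defining $W_{k}$: the $L$-submodule generated by $W_{k}$ is again a $P_{\u}$-submodule of ${\goth p}_{-}\wedge \ex {k-1}{{\goth g}}$ (because $L$ normalizes $P_{\u}$ and preserves ${\goth p}_{-}\wedge \ex {k-1}{{\goth g}}$), hence equals $W_{k}$; you instead pass to the orthogonal complement, proving $W_{k}=V_{k,\u}^{\perp}$ and the $P$-stability of $V_{k,\u}$. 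This is legitimate --- Lemma~\ref{loc1} and Corollary~\ref{c2oc2} precede this lemma, so there is no circularity --- and it simply anticipates the identification the paper only records later, in the proof of Corollary~\ref{cau1}(ii). For (iii), the inclusion ${\rm U}({\goth u}_{-}).W_{k,0}\subset {\goth p}_{-}\wedge \ex {k-1}{{\goth g}}$, which you single out as the main obstacle and settle by an adjointness computation against $\ex k{{\goth p}_{-,\u}}$, is in the paper a one-line observation: ${\goth u}_{-}\subset {\goth p}_{-}$ and ${\goth p}_{-}$ is a subalgebra, so $x.(p\wedge \omega )=[x,p]\wedge \omega + p\wedge x.\omega $ shows that ${\goth p}_{-}\wedge \ex {k-1}{{\goth g}}$ is itself a ${\rm U}({\goth u}_{-})$-module. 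Your duality argument is nevertheless sound (it rests on $[{\goth u}_{-},{\goth p}_{-,\u}]\subset {\goth p}_{-,\u}$ and the invariance of the extended Killing form), just strictly more work than the situation requires.
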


\begin{proof}
(i) Denote by $W'_{k}$ the $L$-submodule of $\ex k{{\goth g}}$ generated by $W_{k}$. As
${\goth p}_{-}\wedge \ex {k-1}{{\goth g}}$ is invariant under $L$, $W'_{k}$ is 
contained in ${\goth p}_{-}\wedge \ex {k-1}{{\goth g}}$. For $x$ in ${\goth p}_{\u}$ and
$g$ in $L$, 
$$ x.g.W_{k} = g.\Ad g^{-1}(x).W_{k} \subset W'_{k},$$
since ${\goth p}_{\u}$ is invaraint under the adjoint action of $L$ in ${\goth g}$. Then 
$W'_{k}$ is invariant under $P_{\u}$, whence $W_{k}=W'_{k}$. As a result, $W_{k}$ is a
${\goth u}$-submodule of $\ex k{{\goth g}}$ since ${\goth u}$ is contained in ${\goth p}$.

(ii) For $\omega $ in $W_{k,0}$, the subspace of $\ex k{{\goth g}}$ generated by 
$\omega $ is a $P_{\u}$-submodule contained in 
${\goth p}_{-}\wedge \ex {k-1}{{\goth g}}$.  Hence $W_{k,0}$ is contained in $W_{k}$. 
Moreover, for $x$ in ${\goth u}$ and $g$ in $H$, 
$$ x.g.\omega = g.\Ad g^{-1}(x).\omega  = 0 .$$ 
Hence $W_{k,0}$ is invariant under $H$. As a result, $W_{k,0}$ is generated by 
highest weight vectors.

(iii) By (ii), $\ec Uu{}-{}.W_{k,0}$ is the $G$-submodule of $\ex k{{\goth g}}$ generated
by $W_{k,0}$. As ${\goth u}_{-}$ is contained in ${\goth p}_{-}$, 
${\goth p}_{-}\wedge \ex {k-1}{{\goth g}}$ is a $\ec Uu{}-{}$-submodule of 
$\ex k{{\goth g}}$ so that $\ec Uu{}-{}.W_{k,0}$ is contained in 
${\goth p}_{-}\wedge \ex {k-1}{{\goth g}}$. Since a $G$-submodule of $\ex k{{\goth g}}$
is generated by highest weight vectors, $\ec Uu{}-{}.W_{k,0}$ is the biggest 
$G$-submodule of $\ex k{{\goth g}}$ contained in 
${\goth p}_{-}\wedge \ex {k-1}{{\goth g}}$.
\end{proof}

\begin{coro}\label{cau1}
{\rm (i)} The subspace $W_{k}$ of $\ex {k}{{\goth g}}$ is the biggest $G$-submodule
contained in ${\goth p}_{-}\wedge \ex {k-1}{{\goth g}}$. 

{\rm (ii)} The subspace $V_{k,\u}$ of $\ex k{{\goth g}}$ is a $G$-submodule of 
$\ex k{{\goth g}}$.
\end{coro}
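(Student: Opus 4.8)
The plan is to prove (i) by identifying $W_k$ with the module ${\rm U}({\goth u}_{-}).W_{k,0}$ furnished by Lemma~\ref{lau1}(iii), and then to deduce (ii) from (i) by a single orthogonality computation. For (i), set $S := {\goth p}_{-}\wedge\ex {k-1}{\g}$ and let $M := {\rm U}({\goth u}_{-}).W_{k,0}$, which by Lemma~\ref{lau1}(iii) is the biggest $G$-submodule of $\ex k{\g}$ contained in $S$. Since a $G$-submodule is in particular a $P_{\u}$-submodule, $M$ is a $P_{\u}$-submodule contained in $S$; by the very definition of $W_k$ as the biggest such submodule, $M\subseteq W_k$.

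The substance is the reverse inclusion, for which I would show that the $G$-submodule generated by $W_k$ already lies in $S$. By (the proof of) Lemma~\ref{lau1}(i), $W_k$ is invariant under $L$ and under $P_{\u}$, hence under $P$, and in particular it is a ${\goth b}$-module since ${\goth b}\subseteq{\goth p}$. As $G$ is connected, its submodules coincide with ${\goth g}$-submodules; by Poincar\'e--Birkhoff--Witt, ${\rm U}({\goth g}) = {\rm U}({\goth u}_{-}){\rm U}({\goth b})$, and ${\rm U}({\goth b}).W_k = W_k$, so the ${\goth g}$-module generated by $W_k$ equals ${\rm U}({\goth u}_{-}).W_k$. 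Now ${\goth u}_{-}\subseteq{\goth p}_{-}$, so $S$ is stable under ${\rm U}({\goth u}_{-})$ --- the same observation used in the proof of Lemma~\ref{lau1}(iii) --- and therefore ${\rm U}({\goth u}_{-}).W_k\subseteq S$. Thus the $G$-submodule generated by $W_k$ is contained in $S$, so it lies in $M$ by the maximality of $M$; in particular $W_k\subseteq M$. Combined with $M\subseteq W_k$ this gives $W_k=M$, which is (i).

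For (ii), apply Lemma~\ref{loc1} with $A=P_{\u}$ and $V=\ex k{{\goth p}_{-,\u}}$: the $P_{\u}$-submodule generated by $V$ is $V_{k,\u}$, so $V_{k,\u}^{\perp}$ is the biggest $P_{\u}$-submodule contained in $V^{\perp}$. By Corollary~\ref{c2oc2}(i), $V^{\perp}={\goth p}_{-}\wedge\ex {k-1}{\g}$, whose biggest $P_{\u}$-submodule is exactly $W_k$ by definition; hence $V_{k,\u}^{\perp}=W_k$. By (i), $W_k$ is a $G$-submodule, and since the extension of the Killing form to $\ex k{\g}$ is $G$-invariant, the orthogonal complement of a $G$-submodule is again a $G$-submodule. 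Therefore $V_{k,\u}=(V_{k,\u}^{\perp})^{\perp}=W_k^{\perp}$ is a $G$-submodule, proving (ii).

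The main obstacle is precisely the reverse inclusion in (i): passing from \emph{``$W_k$ is the biggest $P_{\u}$-submodule''} (equivalently, a ${\goth b}$-module) to \emph{``$W_k$ is a $G$-module''}. The delicate point is that enlarging $W_k$ to the ${\goth g}$-module it generates must not escape $S$; this hinges on the two facts that $W_k$ is ${\goth b}$-stable, so only ${\goth u}_{-}$ can move it, and that ${\goth u}_{-}\subseteq{\goth p}_{-}$ keeps $S$ invariant under ${\rm U}({\goth u}_{-})$. Once (i) is in hand, part (ii) is routine bookkeeping with Lemma~\ref{loc1} and Corollary~\ref{c2oc2}(i).
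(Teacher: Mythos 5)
Your argument is correct, and part (ii) coincides with the paper's: both identify $W_{k}$ as $V_{k,\u}^{\perp}$ via Lemma~\ref{loc1} and Corollary~\ref{c2oc2}(i) and then invoke the $G$-invariance of the form. For part (i), however, you take a genuinely different route. The paper chooses a $G$-invariant complement $W_{k,1}$ to the biggest $G$-submodule $\widetilde{W}_{k}$ of ${\goth p}_{-}\wedge \ex {k-1}{{\goth g}}$, observes that $W_{k}\cap W_{k,1}$ is ${\goth u}$-stable, and applies Lie's Theorem to produce a nonzero ${\goth u}$-invariant vector in $W_{k,0}\cap W_{k,1}$, contradicting $W_{k,0}\subset \widetilde{W}_{k}$; this leans on complete reducibility and on the highest-weight analysis of Lemma~\ref{lau1}(ii)--(iii). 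You instead upgrade the $P$-invariance of $W_{k}$ (extracted, legitimately, from the proof of Lemma~\ref{lau1}(i)) to the statement that the $G$-submodule it generates is ${\rm U}({\goth u}_{-}).W_{k}$ via the factorization ${\rm U}({\goth g})={\rm U}({\goth u}_{-}){\rm U}({\goth b})$, and then use that ${\goth u}_{-}\subset {\goth p}_{-}$ preserves ${\goth p}_{-}\wedge \ex {k-1}{{\goth g}}$ -- the same mechanism the paper uses for ${\rm U}({\goth u}_{-}).W_{k,0}$ in Lemma~\ref{lau1}(iii), applied directly to $W_{k}$. Your version is slightly more economical (it needs neither the invariant complement nor Lie's Theorem, and in fact makes the detour through $M={\rm U}({\goth u}_{-}).W_{k,0}$ dispensable, since once ${\rm U}({\goth g}).W_{k}\subset {\goth p}_{-}\wedge \ex {k-1}{{\goth g}}$ is known, maximality of $W_{k}$ among $P_{\u}$-submodules forces ${\rm U}({\goth g}).W_{k}=W_{k}$ outright); the paper's version keeps the highest-weight vectors of $W_{k,0}$ in the foreground, which is the viewpoint it exploits elsewhere.
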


\begin{proof}
(i) Denote by $\widetilde{W}_{k}$ the biggest $G$-submodule of $\ex k{{\goth g}}$ 
contained in ${\goth p}_{-}\wedge \ex {k-1}{{\goth g}}$. Then $\widetilde{W}_{k}$ is 
contained in $W_{k}$. Let $W_{k,1}$ be a complement to $\widetilde{W}_{k}$ in
$\ex k{{\goth g}}$, invariant under $G$. Then $W_{k}$ is the direct sum of
$\widetilde{W}_{k}$ and $W_{k}\cap W_{k,1}$. By Lemma~\ref{lau1}(i),
$W_{k}\cap W_{k,1}$ is invariant under ${\goth u}$. Then, by Lie's Theorem,
$W_{k,0}\cap W_{k,1} \neq \{0\}$ if $W_{k}\cap W_{k,1}\neq \{0\}$. Hence
$W_{k}=\widetilde{W}_{k}$ since $W_{k,0}$ is contained in $\widetilde{W}_{k}$ by
Lemma~\ref{lau1}(iii).

(ii) By Corollary~\ref{c2oc2}(i) and Lemma~\ref{loc1}, $W_{k}$ is the orthogonal
complement to $V_{k,\u}$ in $\ex k{{\goth g}}$. Hence $V_{k,\u}$ is a $G$-module by (i). 
\end{proof}

\subsection{A particular case} \label{au2}
In this subsection, for some $\beta $ in $\Pi $, $X := \Pi \setminus \{\beta \}$.
Denote by ${\goth h}_{\beta }$ the orthogonal complement to $H_{\beta }$ in ${\goth h}$.
Let $Z$ be the subset of elements $\alpha $ of $<X>$ such that $\beta +\alpha $ is a
root. Set:
$$ Y := {\cal R}_{+}\setminus (<X>\cup \{\beta \}), \quad
Z' := <X>\setminus Z, \quad E := \bigoplus _{\alpha \in Y} {\goth g}^{\alpha }, \quad
E_{-} := \bigoplus _{\alpha \in Y} {\goth g}^{-\alpha } , $$
$$ {\goth u}_{0} := \bigoplus _{\alpha \in Z} {\goth g}^{\alpha }, \quad
{\goth u}_{0,+} := \bigoplus _{\alpha \in Z'} {\goth g}^{\alpha }, \quad
{\goth u}_{0,0} := \bigoplus _{\alpha \in Z} {\goth g}^{-\alpha }, \quad
{\goth u}_{0,-} := \bigoplus _{\alpha \in Z'} {\goth g}^{-\alpha } .$$
Then
$$ {\goth g} := E_{-} \oplus {\goth g}^{-\beta } \oplus {\goth u}_{0,-} \oplus
{\goth u}_{0,0} \oplus \k H_{\beta } \oplus {\goth h}_{\beta } \oplus {\goth u}_{0} \oplus
{\goth u}_{0,+}\oplus {\goth g}^{\beta } \oplus E .$$
For $i=(\poi i1{,\ldots,}{10}{}{}{})$, set:
$$ C_{i} := \ex {i_{1}}{E_{-}}\wedge \ex {i_{2}}{{\goth g}^{-\beta }}\wedge
\ex {i_{3}}{{\goth u}_{0,-}}\wedge \ex {i_{4}}{{\goth u}_{0,0}}\wedge
\ex {i_{5}}{\k H_{\beta }}\wedge $$
$$ \ex {i_{6}}{{\goth h}_{\beta }}\wedge
\ex {i_{7}}{{\goth u}_{0}}\wedge \ex {i_{8}}{{\goth u}_{0,+}}\wedge
\ex {i_{9}}{{\goth g}^{\beta }}\wedge \ex {i_{10}}{E} .$$
For $k$ positive integer, $\ex k{{\goth g}}$ is the direct sum of
$C_{i}, \, i\in {\Bbb N}^{10}_{k}$.

For $\alpha $ in $Z$, denote by $\omega '_{\alpha }$ and $\omega _{\alpha }$ the elements
of $\ex 2{{\goth g}}$,
$$ \omega '_{\alpha } := H_{\beta }\wedge [x_{\beta },x_{\alpha }] +
2 x_{\beta }\wedge x_{\alpha }, \quad
\omega _{\alpha } := H_{\beta }\wedge [x_{-\beta },x_{-\alpha }] +
c_{\alpha } x_{-\beta }\wedge x_{-\alpha }, \quad  \text{with} \quad$$
$$ c_{\alpha } := -\frac{1}{2} \dv {H_{\beta }}{H_{\beta }}
\dv {[x_{\beta },x_{\alpha }]}{[x_{-\beta },x_{-\alpha }]}$$
so that $\omega _{\alpha }$ is orthogonal to $\omega '_{\alpha }$.

\begin{lemma}\label{lau2}
Let $k=1,\ldots,d$. Denote by $I$ the subset of elements $i$ of ${\Bbb N}^{10}_{k}$
such that $\poi i1{+\cdots +}{8}{}{}{}\geq 2$, $M'$ the subspace of elements
$\mu _{\alpha }, \, \alpha \in Z$ of ${\ex {k-1}E}^{Z}$ such that
$$ \sum_{\alpha \in Z} [x_{\beta },x_{\alpha }]\wedge \mu _{\alpha } = 0$$
and $M$ the image of $M'$ by the map
$$ \xymatrix{{\ex {k-1}E}^{Z} \ar[rr] && \ex k{{\goth g}}}, \qquad
(\mu _{\alpha }, \, \alpha \in Z) \longmapsto \sum_{\alpha \in Z} x_{\alpha }\wedge
\mu _{\alpha } .$$
The space $W_{k}$ is contained in the subspace of $\ex k{{\goth g}}$ generated by
$M$, $\omega '_{\alpha }\wedge \ex {k-2}E, \, \alpha \in Z$, $C_{i}, \, i \in I$.
\end{lemma}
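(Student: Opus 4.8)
The plan is to use the fine grading $\ex k{{\goth g}}=\bigoplus_{i\in{\Bbb N}^{10}_{k}}C_{i}$ together with the coarser grading by ${\goth p}_{-}$-degree $p(i):=i_{1}+\cdots +i_{8}$. Since ${\goth p}_{-}$ is the sum of the first eight summands and ${\goth p}_{\u}={\goth g}^{\beta }\oplus E$ the sum of the last two, one has ${\goth p}_{-}\wedge \ex {k-1}{{\goth g}}=\bigoplus_{p(i)\geq 1}C_{i}$ and $\ex k{{\goth p}_{\u}}=\bigoplus_{p(i)=0}C_{i}$. As $W_{k}\subset {\goth p}_{-}\wedge \ex {k-1}{{\goth g}}$ by definition, every $\omega \in W_{k}$ splits as $\omega =\omega _{1}+\omega _{\geq 2}$ with $\omega _{1}\in \bigoplus_{p(i)=1}C_{i}$ and $\omega _{\geq 2}\in \bigoplus_{p(i)\geq 2}C_{i}=\bigoplus_{i\in I}C_{i}$. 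Since $\omega _{\geq 2}$ already lies in the asserted subspace, it suffices to show that $\omega _{1}$ lies in the span of $M$ and of the $\omega '_{\alpha }\wedge \ex {k-2}E$, $\alpha \in Z$.

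The main tool is that $W_{k}$ is invariant under $P_{\u}$, hence under ${\goth p}_{\u}$, while sitting inside $\bigoplus_{p(i)\geq 1}C_{i}$. For $x\in {\goth p}_{\u}$, $\ad x$ maps ${\goth p}_{\u}$ into ${\goth p}_{\u}$ and ${\goth p}_{-}$ into ${\goth g}$, so it lowers $p$ by at most one; hence the component of $x.\omega $ in $\ex k{{\goth p}_{\u}}$ receives nothing from $\omega _{\geq 2}$, and the containment $x.\omega \in \bigoplus_{p(i)\geq 1}C_{i}$ forces $\pi _{0}(x.\omega _{1})=0$ for every $x\in {\goth p}_{\u}$, where $\pi _{0}$ denotes the projection onto $\ex k{{\goth p}_{\u}}$. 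Writing $\omega _{1}=\Theta _{1}+\cdots +\Theta _{8}$, with $\Theta _{j}$ in the wedge of the $j$-th summand with $\ex {k-1}{{\goth p}_{\u}}$, the Leibniz rule shows that $\pi _{0}(x.\omega _{1})$ is obtained by replacing, in each $\Theta _{j}$, the factor $b$ lying in the $j$-th summand by the ${\goth p}_{\u}$-component of $[x,b]$ (acting on the $\ex {k-1}{{\goth p}_{\u}}$-part keeps a factor in ${\goth p}_{-}$, hence stays in degree $p=1$).

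The core of the argument is the single choice $x=x_{\beta }$. A direct bracket check shows that $[x_{\beta },\cdot ]$ has nonzero ${\goth p}_{\u}$-component only on the fifth and seventh summands, namely $[x_{\beta },H_{\beta }]=-2x_{\beta }$ and $[x_{\beta },x_{\alpha }]\in E$ for $\alpha \in Z$, while the summands $1,2,3,4,6,8$ are annihilated or sent outside ${\goth p}_{\u}$ (because $[x_{\beta },x_{-\gamma }]$ with $\gamma \in Y$ has nonpositive $\beta $-coefficient, because $\beta +\alpha $ with $\alpha \in Z'$ and $\beta -\alpha $ with $\alpha \in Z\cup Z'$ are not roots, because $\beta $ vanishes on ${\goth h}_{\beta }$, and because $[x_{\beta },x_{-\beta }]\in {\goth h}$). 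Splitting $\pi _{0}(x_{\beta }.\omega _{1})=0$ in $\ex k{{\goth p}_{\u}}=\ex kE\oplus x_{\beta }\wedge \ex {k-1}E$ by $x_{\beta }$-degree yields two relations. The $\ex kE$-part gives $\sum _{\alpha \in Z}[x_{\beta },x_{\alpha }]\wedge b_{\alpha }=0$, where $b_{\alpha }\in \ex {k-1}E$ is the $x_{\beta }$-free part of the $x_{\alpha }$-coefficient of $\Theta _{7}$; thus $(b_{\alpha })\in M'$ and the corresponding summand $\sum _{\alpha }x_{\alpha }\wedge b_{\alpha }$ of $\omega _{1}$ lies in $M$. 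The $x_{\beta }\wedge \ex {k-1}E$-part couples the $E$-part of the $H_{\beta }$-coefficient of $\Theta _{5}$ to the $x_{\beta }$-parts of the $x_{\alpha }$-coefficients of $\Theta _{7}$ by exactly the linear relation packaged in the vectors $\omega '_{\alpha }$, so this portion of $\omega _{1}$ is a combination of the $\omega '_{\alpha }\wedge \ex {k-2}E$.

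It remains to annihilate $\Theta _{1},\Theta _{2},\Theta _{3},\Theta _{4},\Theta _{6},\Theta _{8}$ and the residual term $H_{\beta }\wedge x_{\beta }\wedge \ex {k-2}E$ left inside $\Theta _{5}$, and this is where I expect the real work to lie. Here I would run $\pi _{0}(x_{\delta }.\omega _{1})=0$ over the root vectors $x_{\delta }\in E$, $\delta \in Y$: the components $\pi _{{\goth p}_{\u}}[x_{\delta },\cdot ]$ are now nonzero on the remaining summands (for instance $[x_{\delta },x_{\alpha }]\in E$ when $\alpha \in Z'$ and $\delta +\alpha $ is a root, while $[x_{\delta },H_{\beta }]=-\delta (H_{\beta })x_{\delta }$ detects the stray $H_{\beta }\wedge x_{\beta }$ term), and separating the resulting identities according to ${\goth h}$-weight --- which is legitimate because $W_{k}$ is a $G$-module, hence ${\goth h}$-graded, so $\omega $ may be taken to be a weight vector --- forces each of these components to vanish. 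Combined with the previous paragraph this gives $\omega _{1}\in M+\sum _{\alpha \in Z}\omega '_{\alpha }\wedge \ex {k-2}E$, as wanted. The obstacle is precisely this weight-by-weight disentangling: the $x_{\beta }$-computation delivers $M$ and the $\omega '_{\alpha }$ almost for free, whereas ruling out cancellations among the competing contributions of the $x_{\delta }$, $\delta \in Y$, to the six unwanted summands is the delicate bookkeeping step.
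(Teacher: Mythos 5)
Your first three paragraphs reconstruct the paper's own argument essentially verbatim: the paper likewise writes ${\goth p}_{-}\wedge \ex {k-1}{{\goth g}}$ as the sum of the $C_{i}$ with $i_{1}+\cdots +i_{8}\geq 1$, notes that $x.C_{i}$ stays in that space for every $x$ when $i\in I$, and then applies the single operator $x_{\beta }$, whose bracket has a nonzero ${\goth p}_{\u}$-component only on $\k H_{\beta }$ and ${\goth u}_{0}$; splitting $\ex k{{\goth p}_{\u}}=\ex kE\oplus x_{\beta }\wedge \ex {k-1}E$ then yields exactly your two relations $\sum_{\alpha }[x_{\beta },x_{\alpha }]\wedge \mu _{\alpha }=0$ and $-2\omega +\sum_{\alpha }[x_{\beta },x_{\alpha }]\wedge \mu '_{\alpha }=0$, which package the $i_{7}=1$ and $i_{5}=1,i_{9}=0$ components into $M$ and $\sum_{\alpha }\omega '_{\alpha }\wedge \ex {k-2}E$. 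Up to that point your proposal is correct and coincides with the paper's proof.

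The gap you concede in your last paragraph is genuine, and the program you sketch for closing it cannot succeed, because the components you are trying to annihilate need not vanish. Take ${\goth g}=\mathfrak{sl}_{3}$, $\beta =\beta _{1}$, $k=d=2$. The $G$-equivariant map $\delta :{\goth g}\rightarrow \ex 2{{\goth g}}$ adjoint to the bracket satisfies $\dv {\delta (x)}{y\wedge z}=c\, \dv x{[y,z]}$, so $\delta ({\goth g})$ is orthogonal to $\ex 2{{\goth p}_{-,\u}}$ (here $[{\goth p}_{-,\u},{\goth p}_{-,\u}]=\{0\}$) and therefore $\delta ({\goth g})\subset W_{2}$; but a direct computation gives that $\delta (x_{\beta _{1}})$ is a linear combination of $x_{-\beta _{2}}\wedge x_{\beta _{1}+\beta _{2}}$ and $H_{\beta _{1}}\wedge x_{\beta _{1}}$ with both coefficients nonzero. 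The first term lies in ${\goth u}_{0,0}\wedge E=C_{i}$ with $i_{4}=i_{10}=1$, so $i\notin I$, and since $\delta (x_{\beta _{1}})$ has weight $\beta _{1}$ it meets neither $M$ nor $\omega '_{\beta _{2}}\wedge \ex 0E$. Hence $W_{2}$ is not contained in the asserted span, and no further differentiation by $x_{\delta }$, $\delta \in Y$, or weight-by-weight disentangling can prove that it is. You should also know that the paper's proof does not perform the step you are missing either: it writes the generic element of $W_{k}$ as $\omega '+H_{\beta }\wedge \omega +\sum_{\alpha }x_{\alpha }\wedge \mu _{\alpha }+\sum_{\alpha }x_{\beta }\wedge x_{\alpha }\wedge \mu '_{\alpha }$ with $\omega '$ in $\bigoplus_{i\in I}C_{i}$, thereby silently assuming that all the other components with $i_{1}+\cdots +i_{8}=1$, as well as the $H_{\beta }\wedge x_{\beta }\wedge \ex {k-2}E$ term, are absent. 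So you have correctly isolated the one unjustified step; but as a proof of the lemma as stated your proposal is incomplete at exactly that point, and the example above indicates that the statement itself needs to be weakened (for instance by enlarging the generating set to include the remaining $C_{i}$ with $i_{1}+\cdots +i_{8}=1$ that actually occur) before any completion is possible.
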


\begin{proof}
By Corollary~\ref{c2oc2}(i) and Corollary~\ref{cau1}, $W_{k}$ is the biggest
$G$-module contained in ${\goth p}_{-}\wedge \ex {k-1}{{\goth g}}$. Denoting by $I'$ the
subset of elements $i$ of ${\Bbb N}^{10}_{k}$ such that
$$ \poi i1{+\cdots +}{8}{}{}{} > 0 ,$$
${\goth p}_{-}\wedge \ex {k-1}{{\goth g}}$ is the sum of $C_{i}, \, i \in I'$. Then for
$i$ in $I$ and $x$ in ${\goth g}$, $x.C_{i}$ is contained in
${\goth p}_{-}\wedge \ex {k-1}{{\goth g}}$. The complement to $I$ in $I'$ is equal to
the subset of elements $i$ of $I'$ such that $i_{9}+i_{10} = k-1$. For $i$ in
$I'\setminus I$ such that $i_{5}=i_{7}=0$, $x_{\beta }.C_{i}$ is contained in
${\goth p}_{-}\wedge \ex {k-1}{{\goth g}}$ since
$$ [x_{\beta },E_{-}] \subset {\goth p}_{-}, \quad
[x_{\beta },{\goth g}^{-\beta }] \subset {\goth p}_{-}, \quad
[x_{\beta },{\goth u}_{0,-}+{\goth u}_{0,0}+{\goth h}_{\beta }+{\goth u}_{0,+}]  
= \{0\} . $$
For $i$ in $I'\setminus I$,
$$ i_{7} = 1 \quad  \text{and} \quad i_{9} = 0 \Longrightarrow
x_{\beta }.C_{i} \subset C_{i} \oplus \ex kE, $$
$$ i_{7} = 1 \quad  \text{and} \quad i_{9} = 1 \Longrightarrow
x_{\beta }.C_{i} \subset C_{i} \oplus {\goth g}^{\beta }\wedge \ex {k-1}E,$$
$$ i_{5} = 1 \quad  \text{and} \quad i_{9} = 0 \Longrightarrow
x_{\beta }.C_{i} \subset C_{i} \oplus {\goth g}^{\beta }\wedge \ex {k-1}E, $$
$$ i_{5} = 1 \quad  \text{and} \quad i_{9} = 1 \Longrightarrow
x_{\beta }.C_{i} \subset C_{i} .$$
As a result, for $\omega $ and $\mu _{\alpha }, \, \alpha \in Z$ in $\ex {k-1}E$ and
$\mu '_{\alpha }, \, \alpha \in Z$ in $\ex {k-2}E$ such that
$$ \omega ' + H_{\beta }\wedge \omega + \sum_{\alpha \in Z} x_{\alpha }
\wedge \mu _{\alpha }
+ \sum_{\alpha \in Z} x_{\beta }\wedge x_{\alpha }\wedge \mu '_{\alpha } \in W_{k}$$
for some $\omega '$ in the sum of $C_{i}, \, i\in I$,
$$ -2\omega + \sum_{\alpha \in Z} [x_{\beta },x_{\alpha }]\wedge \mu '_{\alpha } = 0
\quad  \text{and} \quad  
\sum_{\alpha \in Z} [x_{\beta },x_{\alpha }]\wedge \mu _{\alpha } = 0$$
since $x_{\beta }.W_{k}$ is contained in ${\goth p}_{-}\wedge \ex {k-1}{{\goth g}}$,  
whence the lemma.
\end{proof}

For $i=(\poi i1{,\ldots,}{10}{}{}{})$ in ${\Bbb N}^{10}$, denote by $i^{*}$ the element
of ${\Bbb N}^{10}$,
$$ i^{*} := (i_{10},i_{9},i_{8},i_{7},i_{5},i_{6},i_{4},i_{3},i_{2},i_{1}) .$$
By Lemma~\ref{loc2}(ii), for $k$ positive integer and
$i,i'$ in ${\Bbb N}^{10}_{k}$, $C_{i}$ is orthogonal to $C_{i'}$ if and only if
$i'\neq i^{*}$. 

\begin{coro}\label{cau2} 
Let $k=1,\ldots,d$ and $\alpha $ in $Z$.

{\rm (i)} The space $\omega _{\alpha }\wedge \ex {k-2}{E_{-}}$ is contained in $V_{k,\u}$.

{\rm (ii)} The space ${\goth g}^{\alpha }\wedge \ex {k-1}{E_{-}}$ is contained in
$V_{k,\u}$. 

{\rm (iii)} The spaces $H_{\alpha }\wedge \ex {k-2}{E_{-}}$ and
${\goth g}^{-\alpha }\wedge \ex {k-1}{E_{-}}$ are contained in $V_{k,\u}$.

{\rm (iv)} The space ${\goth h}_{\beta }\wedge \ex {k-1}{E_{-}}$ is contained in 
$V_{k,\u}$.
\end{coro}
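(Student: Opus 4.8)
The plan is to use that $V_{k,\u}$ is, by definition, the $P_{\u}$-submodule of $\ex k{{\goth g}}$ generated by $\ex k{{\goth p}_{-,\u}}$. Since $P_{\u}$ is connected and unipotent with Lie algebra ${\goth p}_{\u}=E\oplus {\goth g}^{\beta }$, this is the smallest subspace of $\ex k{{\goth g}}$ that contains $\ex k{{\goth p}_{-,\u}}=\ex k{E_{-}\oplus {\goth g}^{-\beta }}$ and is stable under the derivation action of ${\goth p}_{\u}$; in particular $\ex k{E_{-}}$ and $x_{-\beta }\wedge \ex {k-1}{E_{-}}$ lie in $V_{k,\u}$, and $V_{k,\u}$ is stable under $x_{\beta }.$ and under $x_{\gamma }.$ for all $\gamma \in Y$. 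Everything then rests on the brackets available for $\alpha \in Z$, for which $\beta +\alpha \in Y$: the elements $[x_{\beta },x_{-\beta }]$, $[x_{\beta +\alpha },x_{-\beta -\alpha }]$ and $[x_{\gamma },x_{-\gamma }]$ are nonzero multiples of $H_{\beta }$, $H_{\beta +\alpha }$ and $H_{\gamma }$, while $[x_{\beta },x_{-\beta -\alpha }]$ and $[x_{\beta +\alpha },x_{-\beta }]$ are nonzero multiples of $x_{-\alpha }$ and $x_{\alpha }$.

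For (i) I would apply $x_{\beta }$ to $x_{-\beta }\wedge x_{-\beta -\alpha }\wedge \eta $ with $\eta \in \ex {k-2}{E_{-}}$, an element of $\ex k{{\goth p}_{-,\u}}\subset V_{k,\u}$. By the Leibniz rule the two terms in which $x_{\beta }$ hits the distinguished factors give $H_{\beta }\wedge x_{-\beta -\alpha }\wedge \eta $ and a multiple of $x_{-\beta }\wedge x_{-\alpha }\wedge \eta $; the ratio of their coefficients is precisely the one built into the definition of $c_{\alpha }$, so their sum is a nonzero multiple of $\omega _{\alpha }\wedge \eta $. The only remaining contribution is $x_{-\beta }\wedge x_{-\beta -\alpha }\wedge (x_{\beta }.\eta )$, so once that term is known to lie in $V_{k,\u}$ one obtains $\omega _{\alpha }\wedge \ex {k-2}{E_{-}}\subset V_{k,\u}$.

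Parts (ii)--(iv) use the same template with a different distinguished factor. For (ii), apply $x_{\beta +\alpha }$ to $x_{-\beta }\wedge \xi $ with $\xi \in \ex {k-1}{E_{-}}$: the leading Leibniz term is a nonzero multiple of $x_{\alpha }\wedge \xi $. For the space ${\goth g}^{-\alpha }\wedge \ex {k-1}{E_{-}}$ of (iii), apply $x_{\beta }$ to $x_{-\beta -\alpha }\wedge \xi \in \ex k{E_{-}}$: its leading term is a nonzero multiple of $x_{-\alpha }\wedge \xi $. The Cartan factors come from the coroot relations: applying $x_{\gamma }$ (for $\gamma \in Y$, and in particular $\gamma =\beta +\alpha $) to $x_{-\gamma }\wedge \xi $ produces $H_{\gamma }\wedge \xi $ modulo a Leibniz error, and, combined with the term $H_{\beta }\wedge x_{-\beta -\alpha }\wedge \eta $ already occurring in $\omega _{\alpha }\wedge \eta $ from (i), the coroots so obtained span the subspaces of ${\goth h}$ needed to isolate the coroot factor in (iii) and the space ${\goth h}_{\beta }\wedge \ex {k-1}{E_{-}}$ in (iv); this last step also requires a short linear-algebra argument showing that the accessible coroots exhaust the relevant part of ${\goth h}$.

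The \emph{main obstacle}, common to all four parts, is that each target space is produced only modulo the Leibniz term in which the raising operator acts on the factor drawn from $\ex {\bullet }{E_{-}}$. Writing this error as a wedge of the fixed negative factors with $x_{\gamma }.\eta $, the derivation $x_{\gamma }.\eta $ splits into a part that stays in $E_{-}$ -- harmless, since the whole wedge then remains in $\ex k{{\goth p}_{-,\u}}$ -- and a part landing in a negative root space of ${\goth l}$ (a ${\goth u}_{0,0}$-type factor) or in a positive root space inside $E$. The heart of the proof is to show that all such error terms already belong to $V_{k,\u}$. I expect this to be organised as an induction on $k$ built on the decomposition of $\ex k{{\goth g}}$ into the spaces $C_{i}$ introduced in Subsection~\ref{au2}: each error term is expanded in the $C_{i}$ and matched against components that the induction, or the cases already treated, has placed in $V_{k,\u}$, with the explicit value of $c_{\alpha }$ guaranteeing at every stage that the leading $C_{i}$-component is exactly the intended one.
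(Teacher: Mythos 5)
There is a genuine gap, and you have in fact located it yourself: everything hinges on showing that the Leibniz error terms lie in $V_{k,\u}$, and your proposal only expresses the hope that an induction on $k$ organised along the $C_{i}$ will handle them. As described, it will not. When you apply $x_{\beta }$ or $x_{\beta +\alpha }$ to an element of $\ex k{{\goth p}_{-,\u}}$, the error term in which the raising operator hits the $\ex {\bullet }{E_{-}}$ factor is again an element of degree $k$ --- for instance $x_{-\beta }\wedge x_{-\beta -\alpha }\wedge x_{-\delta }\wedge \mu $ with $\delta $ in $<X>$ and $\mu $ in $\ex {k-3}{E_{-}}$, or, for the operator $x_{\beta +\alpha }$ used in (ii), terms carrying a factor in ${\goth h}$ or in ${\goth u}$ --- so an induction hypothesis in degree $k-1$ says nothing about it; and nothing guarantees that ${\goth p}_{-,\u}\wedge V_{k-1,\u}$ lies in $V_{k,\u}$, since ${\goth p}_{-,\u}$ is not $P_{\u}$-stable and Lemma~\ref{lint} therefore does not apply. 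Proving that these degree-$k$ error terms belong to $V_{k,\u}$ is a problem of exactly the same nature and difficulty as the statement itself, so the argument is circular as it stands. (Your leading-term computations are correct; in particular the ratio of the two leading Leibniz coefficients does equal the ratio built into $c_{\alpha }$, by invariance of the Killing form.)

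The paper sidesteps this obstacle entirely by working from above rather than from below. By Corollary~\ref{c2oc2}(i), Lemma~\ref{loc1} and Corollary~\ref{cau1}, $V_{k,\u}$ is the orthogonal complement in $\ex k{{\goth g}}$ of $W_{k}$, the biggest $G$-submodule contained in ${\goth p}_{-}\wedge \ex {k-1}{{\goth g}}$, and Lemma~\ref{lau2} confines $W_{k}$ inside an explicit subspace spanned by $M$, the spaces $\omega '_{\gamma }\wedge \ex {k-2}{E}$ and the $C_{i}$, $i\in I$. Parts (i), (ii) and (iv) then reduce to a finite orthogonality check of the target space against this spanning set, carried out with the decomposition into the $C_{i}$ and the fact that $\omega _{\alpha }$ was constructed to be orthogonal to $\omega '_{\alpha }$ --- which is where $c_{\alpha }$ is actually used, rather than in a Leibniz identity. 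Only part (iii) proceeds by a computation of your type, applying $x_{-\alpha }$ to $x_{\alpha }\wedge \omega $, and there the error term $x_{\alpha }\wedge x_{-\alpha }.\omega $ is harmless precisely because $E_{-}$ is stable under $\ad x_{-\alpha }$. To complete your approach you would have to prove directly that every error term lies in $V_{k,\u}$, which in effect amounts to reproving Lemma~\ref{lau2} in dual form; the cleaner route is to import the duality step.
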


\begin{proof}
(i) Let $I_{1}$ be the subset of elements $i$ of ${\Bbb N}^{10}_{k}$ such that
$$ (i_{1} = k-1 \quad  \text{and} \quad i_{5} = 1) \quad  \text{or} \quad
(i_{1}=k-2, \, i_{2} = 1, \, i_{4} = 1) .$$
Then $\omega _{\alpha }\wedge \ex {k-2}{E_{-}}$ is contained in the
sum of $C_{i}, \, i\in I_{1}$. Hence $\omega _{\alpha }\wedge \ex {k-2}{E_{-}}$ is
orthogonal to $C_{i}$ for all $i$ in $I$ and for $i$ such that $i_{7}=1$ and
$i_{10}=k-1$. By Corollary~\ref{coc2}(i), for all $\gamma $ in $Z$,
$\omega _{\alpha }\wedge \ex {k-2}{E_{-}}$ is orthogonal to
$\omega '_{\gamma }\wedge \ex {k-2}{E}$ since $\omega _{\alpha }$ and $\omega '_{\gamma }$
are orthogonal, whence the assertion by Lemma~\ref{lau2} since $V_{k,\u}$ is the
orthogonal complement to $W_{k}$ in $\ex k{{\goth g}}$ by Lemma~\ref{loc1}.

(ii) The space ${\goth g}^{\alpha }\wedge \ex {k-1}{E_{-}}$ is contained
in $C_{i}$ with $i$ in ${\Bbb N}^{10}_{k}$ such that $i_{7}=1$ and $i_{1}=k-1$. Hence
${\goth g}^{\alpha }\wedge \ex {k-1}{E_{-}}$ is orthogonal to $C_{j}$ for all $j$
in $I$. Moreover, it is orthogonal to $C_{j}$ for $j$ in ${\Bbb N}^{10}_{k}$ such that
$j_{7}=1$ and $j_{10}=k-1$ and $C_{j'}$ for $j'$ in $I_{1}$. As a result,
${\goth g}^{\alpha }\wedge \ex {k-1}{E_{-}}$ is orthogonal to $W_{k}$ by Lemma~\ref{lau2},
whence the assertion since $V_{k,\u}$ is the orthogonal complement to $W_{k}$ in
$\ex k{{\goth g}}$ by Lemma~\ref{loc1}.

(iii) By (ii) and Corollary~\ref{cau1}(ii), for $\omega $ in $\ex {k-1}{E_{-}}$,
$$ V_{k,\u} \ni x_{-\alpha }.(x_{\alpha }\wedge \omega ) = -H_{\alpha }\wedge \omega +
x_{\alpha }\wedge x_{-\alpha }.\omega \quad  \text{and} $$
$$  x_{-\alpha }.(H_{\alpha }\wedge \omega ) = 2x_{-\alpha }\wedge \omega +
H_{\alpha }\wedge x_{-\alpha }.\omega .$$
As $E_{-}$ is invariant under the adjoint action of $x_{-\alpha }$,
$x_{\alpha }\wedge x_{-\alpha }.\omega $ is in $V_{k,\u}$ by (ii), whence the assertion.

(iv) The space ${\goth h}_{\beta }\wedge \ex {k-1}{E_{-}}$ is equal to $C_{i}$ for
$i$ such that $i_{1}=k-1$ and $i_{6}=1$. Then $C_{i}$ is orthogonal to $C_{j}$ for
$j$ in $I$. Moreover, it is orthogonal to ${\goth g}^{\alpha }\wedge \ex {k-1}E$ for all
$\alpha $ in $Z$ and $H_{\beta }\wedge \ex {k-1}{{\goth p}_{\u}}$ since 
${\goth h}_{\beta }$ is orthogonal to $H_{\beta }$ and ${\goth u}$. As a result,
${\goth h}_{\beta }\wedge \ex {k-1}{E_{-}}$ is orthogonal to $W_{k}$ by Lemma~\ref{lau2},
whence the assertion since $V_{k,\u}$ is the orthogonal complement to $W_{k}$ in
$\ex k{{\goth g}}$ by Lemma~\ref{loc1}.
\end{proof}

Denote by ${\goth d}$ the derived algebra of ${\goth l}$.

\begin{prop}\label{pau2}
Let $k=1,\ldots,d$, $i=0,\ldots,k-1$.

{\rm (i)} Let $M$ be a $P_{\u}$-submodule of $\ex {i}{{\goth g}}$. Then the
$P_{\u}$-submodule of $\ex k{{\goth g}}$ generated by $
\ex {k-i}{{\goth p}_{-,\u}}\wedge M$ contains
$\ex {k-i-1}{{\goth p}_{-,\u}}\wedge {\goth d}\wedge M$.

{\rm (ii)} Let $N$ be a $P_{-,\u}$-submodule of $\ex {i}{{\goth g}}$. Then the
$P_{-,\u}$-submodule of $\ex k{{\goth g}}$ generated by
$\ex {k-i}{{\goth p}_{\u}}\wedge N$ contains
 $\ex {k-i-1}{{\goth p}_{\u}}\wedge {\goth d}\wedge N$.
\end{prop}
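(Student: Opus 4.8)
The statement is symmetric in parts (i) and (ii): swapping the roles of ${\goth p}_{\u}$ and ${\goth p}_{-,\u}$ (equivalently passing from $X$ to its image under $-w_0$, or simply observing that $P$ and $P_-$ play interchangeable roles) turns (i) into (ii). So I would prove (i) in detail and deduce (ii) by this symmetry. The plan is to exhibit, for a generic element $\eta \in \ex{k-i}{{\goth p}_{-,\u}}$ and each root vector spanning ${\goth d}$, an element $x \in {\goth p}_{\u}$ whose action $x.(\eta \wedge M)$ produces the desired $\ex{k-i-1}{{\goth p}_{-,\u}}\wedge {\goth d}\wedge M$ contribution, modulo terms that already lie in the $P_{\u}$-submodule generated by $\ex{k-i}{{\goth p}_{-,\u}}\wedge M$.

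The key computation is the action of a root vector $x_{\gamma} \in {\goth p}_{\u}$ (so $\gamma \in <X^c>$ in the notation, i.e. a root of the nilradical) on a wedge $x_{-\delta_1}\wedge \cdots \wedge x_{-\delta_{k-i}}\wedge m$ with the $-\delta_j$ roots of ${\goth p}_{-,\u}$ and $m \in M$. By the Leibniz rule,
$$ x_{\gamma}.(x_{-\delta_1}\wedge \cdots \wedge x_{-\delta_{k-i}}\wedge m)
= \sum_{j} x_{-\delta_1}\wedge \cdots \wedge [x_{\gamma},x_{-\delta_j}]\wedge \cdots \wedge x_{-\delta_{k-i}}\wedge m
+ x_{-\delta_1}\wedge \cdots \wedge x_{-\delta_{k-i}}\wedge x_{\gamma}.m. $$
Since $M$ is a $P_{\u}$-submodule, the last term lies in $\ex{k-i}{{\goth p}_{-,\u}}\wedge M$ already. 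In each summand of the first sum, the bracket $[x_{\gamma},x_{-\delta_j}]$ is supported on ${\goth g}^{\gamma-\delta_j}$; choosing $\gamma$ so that $\gamma - \delta_j$ is a root of ${\goth d}$ (possible because every simple root of ${\goth d}$ arises as such a difference, $X$ being the relevant connected-component data) inserts a ${\goth d}$-factor in position $j$ while the other factors stay in ${\goth p}_{-,\u}$. The remaining summands $j'\neq j$ produce either zero or terms with two ${\goth p}_{-,\u}$-factors replaced, which can be controlled and absorbed.

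The \textbf{main obstacle} is bookkeeping: for a fixed target $x_{-\delta_1}\wedge\cdots\wedge x_{-\delta_{k-i-1}}\wedge x_{\mu}\wedge M$ with $x_{\mu}\in{\goth d}$, the single application above generates several unwanted cross-terms in which a different factor was altered, and one must argue these lie in the generated submodule too. The clean way to organize this is by induction on the number of ${\goth p}_{-,\u}$-factors together with the orthogonality machinery of Section~\ref{oc2}: set $\Phi$ equal to the $P_{\u}$-submodule generated by $\ex{k-i}{{\goth p}_{-,\u}}\wedge M$, and show $\ex{k-i-1}{{\goth p}_{-,\u}}\wedge{\goth d}\wedge M \subseteq \Phi$ by checking that any homogeneous monomial spanning the left side is obtained from the above action after subtracting correction terms already known (inductively, for fewer nilradical factors) to be in $\Phi$. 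Since ${\goth d}$ is spanned by such brackets $[x_{\gamma},x_{-\delta}]$ and the associated $H_{\delta}$-components, iterating over $\gamma$ and using Corollary~\ref{cau1}(ii) (that the generated module is a genuine $P_{\u}$-submodule, hence closed under these operations) closes the argument. Part (ii) then follows verbatim with ${\goth p}_{\u}$ and ${\goth p}_{-,\u}$ exchanged and $P_{\u}$ replaced by $P_{-,\u}$.
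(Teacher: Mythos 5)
Your reduction to the case $M=\k$ (via Lemma~\ref{lint}) and your symmetry argument for (ii) both match the paper, but the core of your argument for (i) has a genuine gap: the Leibniz computation you write down is exactly the point where a direct attack fails, and the sentence ``the remaining summands $j'\neq j$ produce \dots terms \dots which can be controlled and absorbed'' is where the missing proof lives. Concretely, $x_{\gamma}.(x_{-\delta_{1}}\wedge\cdots\wedge x_{-\delta_{k-i}}\wedge m)$ is a \emph{sum} over all positions $j'$, and for $j'\neq j$ the bracket $[x_{\gamma},x_{-\delta_{j'}}]$ need not land in ${\goth d}$: it can land in ${\goth z}$ (e.g.\ the $H_{\gamma}$-term when $\gamma=\delta_{j'}$, whose ${\goth z}$-component is nonzero since $\gamma\notin{\cal R}_{X}$), or in ${\goth p}_{\u}$ or ${\goth p}_{-,\u}$ when the ${\goth z}$-degrees of $\gamma$ and $\delta_{j'}$ differ. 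Those contributions are neither in the target $\ex{k-i-1}{{\goth p}_{-,\u}}\wedge{\goth d}\wedge M$ nor obviously in the generated submodule $\Phi$, and you give no mechanism for isolating the one term you want from the others; the proposed ``induction on the number of ${\goth p}_{-,\u}$-factors'' is never formulated as a precise statement, and it is not clear what decreasing quantity it would run on.

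The paper circumvents this entirely by dualizing. It identifies $V_{k,\u}$ (the $P_{\u}$-submodule generated by $\ex{k}{{\goth p}_{-,\u}}$) as the orthogonal complement of $W_{k}$, the largest $G$-submodule contained in ${\goth p}_{-}\wedge\ex{k-1}{{\goth g}}$ (Corollary~\ref{c2oc2}(i), Lemma~\ref{loc1}, Corollary~\ref{cau1}); it then bounds $W_{k}$ explicitly from above by analyzing the constraints that $x_{\beta}$-stability imposes (Lemma~\ref{lau2}, using the ten-block decomposition of ${\goth g}$ and the elements $\omega_{\alpha},\omega'_{\alpha}$), and checks that the needed subspaces ${\goth g}^{\pm\alpha}\wedge\ex{k-1}{E_{-}}$, $H_{\alpha}\wedge\ex{k-2}{E_{-}}$, ${\goth h}_{\beta}\wedge\ex{k-1}{E_{-}}$ are orthogonal to that bound (Corollary~\ref{cau2}). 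Only at the very end does it pass from the root spaces indexed by $Z$ to all of ${\goth d}$, and for that it needs that $V_{k,\u}$ is a full $G$-module (Corollary~\ref{cau1}(ii)) hence an ${\goth l}$-module — a fact you cite but misstate as ``a genuine $P_{\u}$-submodule'', which is weaker than what is actually used. None of this dual/orthogonality argument is reproduced or replaced in your proposal, so as written the proof does not go through.
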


\begin{proof}
(i) By Lemma~\ref{lint}, it is sufficient to prove that $V_{k-i,\u}$ contains
$\ex {k-i-1}{{\goth p}_{-,\u}}\wedge {\goth d}$ since $M$ is a $P_{\u}$-module.
For $\alpha $ in $Z$,
$$ H_{\alpha } \in \frac{\beta (H_{\alpha })}{2} H_{\beta } + {\goth h}_{\beta } .$$
So, by Corollary~\ref{cau2},(iii) and (iv), $H_{\beta }\wedge \ex {k-i-1}{E_{-}}$ is
contained in $V_{k-i,\u}$. Then, by Corollary~\ref{cau2}(i),
$$ V_{k-i,\u} \supset {\goth g}^{-\beta }\wedge {\goth g}^{-\alpha }\wedge
\ex {k-i-2}{E_{-}} .$$
As a result, by Corollary~\ref{cau2}(iii), for all $\alpha $ in $Z$,
${\goth g}^{-\alpha }\wedge \ex {k-i-1}{{\goth p}_{-,\u}}$ is contained in
$V_{k-i,\u}$. As ${\goth g}$ is simple, the ${\goth l}$-submodule of ${\goth g}$,
generated by ${\goth g}^{\alpha }, \, \alpha \in Z$ is equal to ${\goth d}$. By
Corollary~\ref{cau1}(ii), $V_{k-i,\u}$ is a ${\goth l}$-module. Then, by
Lemma~\ref{lint}, $V_{k-i,\u}$ contains $\ex {k-i-1}{{\goth p}_{-,\u}}\wedge {\goth d}$
since $\ex {k-i-1}{{\goth p}_{-,\u}}$ is a ${\goth l}$-module, whence the assertion.

(ii) For some automorphism $g$ of ${\goth g}$, $g({\goth p}_{\u})={\goth p}_{-,\u}$,
$g({\goth p}_{-,\u})={\goth p}_{\u}$, $g({\goth h})={\goth h}$. Then ${\goth l}$ and
${\goth d}$ are invariant under $g$. A a result, by (i), the $P_{-,\u}$-submodule of
$\ex k{{\goth g}}$ generated by  $\ex {k-i}{{\goth p}_{\u}}\wedge N$ contains
$\ex {k-i-1}{{\goth p}_{\u}}\wedge {\goth d}\wedge N$ since $g(N)$ is a
$P_{\u}$-submodule of $\ex i{{\goth g}}$.
\end{proof}

\section{Proof of Theorem~\ref{tint}} \label{pt}
Let $\rg \geq 2$ and $X$ a nonempty subset of $\Pi $, different from $\Pi $. Set: 
$$ {\goth p} := {\goth p}_{X}, \quad {\goth p}_{\u} := {\goth p}_{X,\u}, \quad 
{\goth l} := {\goth l}_{X}, \quad {\goth z} := {\goth z}_{X}, \quad
{\goth d} := {\goth d}_{X}, \quad \n := \n_{X}, $$
$$ {\goth p}_{-,\u} := {\goth p}_{X,-,\u}, \quad
{\goth p}_{\pm,\u} := {\goth p}_{\u} \oplus {\goth p}_{-,\u} , \quad
E := E_{X}, \quad {\goth p}_{-} := {\goth l}\oplus {\goth p}_{-,\u}, \quad
d := \dim {\goth p}_{\u}.$$
Recall that $\poi {{\goth d}}1{,\ldots,}{\n}{}{}{}$ are the simple factors of
${\goth d}$ and for $i=1,\ldots,\n$, $n_{i}$ is the number of positive roots $\alpha $
such that ${\goth g}^{\alpha }$ is contained in ${\goth d}_{i}$. Let $P_{\u}$ and
$P_{-,\u}$ be as in Section~\ref{au}. For $k=1,\ldots,n$, set
$V_{k,{\goth p}} := V_{k,{\goth p}_{X}}$ and $V_{k,X} := V_{k}$. 

\subsection{A partial result} \label{pt1}
Let $n'$ be the sum $\poi n1{+\cdots +}{\n}{}{}{}$. For $k=1,\ldots,n$, denote by
$V'_{k}$ the subspace of $\ex k{{\goth g}}$,
$$ V'_{k} := \bigoplus _{j=0}^{n'} \ex j{{\goth d}}\wedge \ex {k-j}E .$$

\begin{prop}\label{ppt1}
Suppose $\vert X \vert = \rg-1$. Let $k=1,\ldots,n$. Then $\ex k{{\goth g}}$ is
the $G$-submodule of $\ex k{{\goth g}}$ generated by $V'_{k}$.
\end{prop}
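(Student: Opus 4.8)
The plan is to prove that the $G$-submodule $M$ of $\ex k{{\goth g}}$ generated by $V'_{k}$ equals $\ex k{{\goth g}}$. I work with the splitting ${\goth g}={\goth d}\oplus E$ and the resulting grading
$$ \ex k{{\goth g}}=\bigoplus _{p=0}^{k}\ex p{{\goth d}}\wedge \ex {k-p}E ,$$
refined, via $E={\goth z}\oplus {\goth p}_{\u}\oplus {\goth p}_{-,\u}$, into components $\ex p{{\goth d}}\wedge \ex q{{\goth z}}\wedge \ex r{{\goth p}_{\u}}\wedge \ex s{{\goth p}_{-,\u}}$. From the definition of $V'_{k}$ two inclusions are immediate and are the only input about $V'_{k}$ I use: first $\ex kE\subset V'_{k}\subset M$ (the summand $j=0$), so every component with $p=0$ lies in $M$; second $\ex p{{\goth d}}\wedge \ex {k-p}E\subset M$ for every $p\leq n'$ (the full exterior power of $E$ occurs), so every component of ${\goth d}$-degree at most $n'$ lies in $M$.

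The engine is a conversion step exchanging one ${\goth p}_{-,\u}$ for one ${\goth d}$: for $1\leq s\leq d$, if $U$ is a $P_{\u}$-submodule of $\ex {k-s}{{\goth g}}$ with $\ex s{{\goth p}_{-,\u}}\wedge U\subset M$, then $\ex {s-1}{{\goth p}_{-,\u}}\wedge {\goth d}\wedge U\subset M$. Indeed Proposition~\ref{pau2}(i), applied with ambient degree $s$ and $i=0$, gives $\ex {s-1}{{\goth p}_{-,\u}}\wedge {\goth d}\subset V_{s,\u}$; wedging the bystander $U$ on the right, Lemma~\ref{lint} identifies the $P_{\u}$-module generated by $\ex s{{\goth p}_{-,\u}}\wedge U$ with $V_{s,\u}\wedge U$, which therefore contains $\ex {s-1}{{\goth p}_{-,\u}}\wedge {\goth d}\wedge U$; and since $M$ is a $P_{\u}$-module containing $\ex s{{\goth p}_{-,\u}}\wedge U$, it contains that $P_{\u}$-module. (When $s=k$ the bystander is trivial and one invokes instead $V_{k,\u}\subset M$, which holds by Corollary~\ref{cau1}(ii) since $\ex k{{\goth p}_{-,\u}}\subset \ex kE\subset M$.) Crucially this is valid for every $k$, because Proposition~\ref{pau2} is only ever used at the small degree $s\leq d$, the large degree $k$ riding passively on $U$.

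Granting a base case, I run a downward induction on $a$, from $a=\min (k,d)$ to $a=0$, of
$$ P(a):\qquad \ex a{{\goth p}_{-,\u}}\wedge \ex {k-a}{{\goth g}}\subset M ,$$
where $P(0)$ is the goal and $\ex a{{\goth p}_{-,\u}}\wedge \ex {k-a}{{\goth g}}$ is exactly the sum of the components of ${\goth p}_{-,\u}$-degree $\geq a$. For the step $P(a)\Rightarrow P(a-1)$ it suffices to treat the new components, those of ${\goth p}_{-,\u}$-degree exactly $a-1$, say $\ex p{{\goth d}}\wedge \ex q{{\goth z}}\wedge \ex r{{\goth p}_{\u}}\wedge \ex {a-1}{{\goth p}_{-,\u}}$ with $p+q+r=k-a+1$. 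If $p=0$ the component sits in $\ex kE\subset M$. If $p\geq 1$ one factors $\ex p{{\goth d}}\subset {\goth d}\wedge \ex {p-1}{{\goth d}}$ to place the component inside $\ex {a-1}{{\goth p}_{-,\u}}\wedge {\goth d}\wedge \ex {k-a}{{\goth g}}$, which lies in $M$ by the conversion applied to $U=\ex {k-a}{{\goth g}}$ and the hypothesis $P(a)$.

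The base case $P(\min(k,d))$ is where the argument bifurcates and where the ${\goth d}$-summands of $V'_{k}$ become indispensable. If $k\leq d$ then $P(k)=\ex k{{\goth p}_{-,\u}}\subset \ex kE\subset M$ is free. If $k>d$ then $P(d)=\ex d{{\goth p}_{-,\u}}\wedge \ex {k-d}{{\goth g}}$; here $\ex d{{\goth p}_{-,\u}}$ is the top power of ${\goth p}_{-,\u}$, so it kills every further ${\goth p}_{-,\u}$ and each component of $P(d)$ has ${\goth p}_{-,\u}$-degree $d$, hence ${\goth d}$-degree $p\leq k-d$. Since $k\leq n=d+n'$, this forces $p\leq n'$, so every component of $P(d)$ already lies in $V'_{k}$. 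I expect this to be the heart of the matter: once $k>d$ one has $\ex k{{\goth p}_{-,\u}}=\ex k{{\goth p}_{\u}}=0$, so the pure nilradical generators that drive Section~\ref{au} vanish and Proposition~\ref{pau2} can no longer be seeded from them; the rescue is precisely that the maximal ${\goth p}_{-,\u}$-degree caps the ${\goth d}$-degree at $n'$, matching the built-in summands $\ex j{{\goth d}}\wedge \ex {k-j}E$ of $V'_{k}$. The most delicate bookkeeping will be keeping the conversion step degree-localised to $s\leq d$ while the large degree $k$ is carried passively by $U$.
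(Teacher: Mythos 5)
Your proof is correct, and it reorganizes rather than reproduces the paper's argument. Both proofs run on the same engine --- Proposition~\ref{pau2}(i), which trades a factor of ${\goth p}_{-,\u}$ for a factor of ${\goth d}$, combined with Lemma~\ref{lint} to carry a bystanding module --- but the inductions are set up differently. The paper first reduces the statement to the claim that $\ex kE$ generates $\ex k{{\goth g}}$ as a $G$-module for $k=1,\ldots,d$, via the identity $V'_{k}=\ex {n'}{{\goth g}}\wedge \ex {k-n'}E$ for $k>n'$ and Lemma~\ref{lint}; it then proves that claim by upward induction on $k$, showing the module generated by $\ex kE$ contains ${\goth g}\wedge \ex {k-1}E$, with the bystanders being the $P_{\u}$-modules $\ex j{{\goth p}_{\u}}$ and $\ex {j+1}{{\goth p}_{\u}}+{\goth z}\wedge \ex j{{\goth p}_{\u}}$. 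You instead fix $k$ and run a downward induction on the ${\goth p}_{-,\u}$-degree with the single bystander $\ex {k-a}{{\goth g}}$, seeding the induction at degree $\min (k,d)$ directly from the summands $\ex j{{\goth d}}\wedge \ex {k-j}E$, $j\leq n'$, of $V'_{k}$. Your route dispenses with the preliminary reduction, and it correctly handles the one genuinely delicate point: Proposition~\ref{pau2} is only stated for total degree at most $d$, so at total degree $k>d$ you must factor its application through $V_{s,\u}$ at degree $s\leq d$ and only then wedge with the bystander via Lemma~\ref{lint} --- which you do, and which is exactly how the paper's own proof of Proposition~\ref{pau2}(i) is structured internally. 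The paper's version buys a cleaner intermediate statement about $\ex kE$ alone; yours is self-contained for each $k$ and makes visible precisely where the ${\goth d}$-summands of $V'_{k}$ are indispensable, namely the base case when $k>d$ and the top power $\ex d{{\goth p}_{-,\u}}$ caps the ${\goth d}$-degree at $n'$.
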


\begin{proof}
For $k=1,\ldots,d$, denote by $E_{k}$ the $G$-submodule of $\ex k{{\goth g}}$ generated
by $\ex kE$. For $k=1,\ldots,n'$, $V'_{k} = \ex k{{\goth g}}$ and for $k>n'$,
$$ V'_{k} = \ex {n'}{{\goth g}}\wedge \ex {k-n'}E .$$
So, by Lemma~\ref{lint}, it is sufficient to prove that $E_{k}=\ex k{{\goth g}}$
for $k=1,\ldots,d$ since $n=n'+d$. 
  
Prove the assertion by induction on $k$. For $k=1$ the assertion is true since
${\goth g}$ is simple. Suppose $k>1$ and the assertion true for $k-1$. As
$\vert X \vert = \rg -1$, ${\goth z}$ has dimension $1$ and
$$ \ex kE = \ex k{{\goth p}_{\pm,\u}} \oplus {\goth z}\wedge
\ex {k-1}{{\goth p}_{\pm,\u}} = $$
$$ \bigoplus _{j=0}^{k} \ex {k-j}{{\goth p}_{-,\u}}\wedge \ex {j}{{\goth p}_{\u}}
\oplus \bigoplus _{j=0}^{k-1}
\ex {k-1-j}{{\goth p}_{-,\u}}\wedge {\goth z}\wedge \ex {j}{{\goth p}_{\u}} .$$
For $j=0,\ldots,d$, $\ex {j}{{\goth p}_{\u}}$ and
$\ex {j+1}{{\goth p}_{\u}} + {\goth z}\wedge \ex {j}{{\goth p}_{\u}}$ are
$P_{\u}$-submodules of $\ex j{{\goth g}}$ and $\ex {j+1}{{\goth g}}$ respectively.
Then, by Proposition~\ref{pau2}(i), $E_{k}$ contains
$$ \ex {k-j-1}{{\goth p}_{-,\u}}\wedge {\goth d}\wedge \ex {j}{{\goth p}_{\u}}
\quad  \text{and} \quad
\ex {k-j'-2}{{\goth p}_{-,\u}}\wedge {\goth z}\wedge {\goth d}\wedge
\ex {j'}{{\goth p}_{\u}}$$
for $j=0,\ldots,k-1$ and $j'=0,\ldots,k-2$. Hence $E_{k}$ contains
$$ \ex {k-j-1}{{\goth p}_{-,\u}}\wedge {\goth g}\wedge \ex {j}{{\goth p}_{\u}}
\quad  \text{and} \quad
\ex {k-j'-2}{{\goth p}_{-,\u}}\wedge {\goth z}\wedge {\goth g} 
\wedge \ex {j'}{{\goth p}_{\u}}$$
for $j=0,\ldots,k-1$ and $j'=0,\ldots,k-2$ since $\ex kE$ contains
$$ \ex {k-j}{{\goth p}_{-,\u}}\wedge \ex {j}{{\goth p}_{\u}}, \quad
\ex {k-j-1}{{\goth p}_{-,\u}}\wedge \ex {j+1}{{\goth p}_{\u}}, $$
$$ \ex {k-j-1}{{\goth p}_{-,\u}}\wedge {\goth z}\wedge \ex {j}{{\goth p}_{\u}}, \quad
\ex {k-j'-2}{{\goth p}_{-,\u}}\wedge {\goth z}\wedge \ex {j'+1}{{\goth p}_{\u}}$$
for $j=0,\ldots,k-1$ and $j'=0,\ldots,k-2$. As a result, $E_{k}$ contains
${\goth g}\wedge \ex {k-1}E$. Then, by Lemma~\ref{lint} and the induction hypothesis,
$E_{k}=\ex k{{\goth g}}$, whence the proposition.
\end{proof}

\begin{rema}\label{rpt1}
When $X$ is connected, $V_{k,{\goth p}}=V'_{k,{\goth p}}$. Then, in this case under the
assumption $\vert X \vert = \rg -1$, $V_{k}=\ex k{{\goth g}}$ by
Proposition~\ref{ppt1}. 
\end{rema}

\subsection{A first particular case} \label{pt2}
In this subsection, $\vert X \vert = \rg -1$ so that $\n \in \{1,2,3\}$. As a matter of
fact, $\n=3$ only for type ${\mathrm {D}}$ and ${\mathrm {E}}$. As in
Subsection~\ref{pt1}, $n'=\poi n1{+\cdots +}{\n}{}{}{}$. For
$i=(\poi i1{,\ldots,}{\n}{}{}{})$ and $k=0,\ldots,n$, set:
$$ {\goth D}_{i} := \ex {i_{1}}{{\goth d}_{1}}\wedge \cdots \wedge
\ex {i_{\n}}{{\goth d}_{\n}} \quad  \text{and} \quad
{\Bbb I}_{k} := \{(\poi i1{,\ldots,}{\n}{}{}{}) \in {\Bbb N}^{\n}_{k} \; \vert \;
0\leq i_{1}\leq n_{1},\ldots,0\leq i_{\n}\leq n_{\n}\}.$$
For $l,l'$ nonnegative integers and $i$ in ${\Bbb N}^{\n}$, set:
$$ V_{l,l',i} := \ex {l}{{\goth p}_{-,\u}}\wedge {\goth D}_{i}\wedge
\ex {l'}{{\goth p}_{\u}} \quad  \text{and}$$
$$ V'_{l,l',i} := \ex {l}{{\goth p}_{-,\u}}\wedge {\goth z}\wedge {\goth D}_{i}\wedge
\ex {l'}{{\goth p}_{\u}} .$$
For $j$ in ${\Bbb N}^{\n}$ and $t=0,\ldots,\vert j \vert$, denote by $\Delta _{j}$ and
$\Delta _{j,t}$ the subsets of ${\Bbb N}^{\n}$, 
$$ \Delta _{j} := \{j'\in {\Bbb N}^{\n} \; \vert \; j'_{1}\leq j_{1},\ldots,
j'_{\n}\leq j_{\n}\}\quad  \text{and} \quad
\Delta _{j,t} := \Delta _{j}\cap {\Bbb N}^{\n}_{t}.$$
  
\begin{lemma}\label{lpt2}
Let $k=1,\ldots,n$, $(l,l')$ in ${\Bbb N}^{2}$ such that $l+l'\leq 2d$, $i$ in
${\Bbb I}_{k-l-l'}$ and $i'$ in ${\Bbb I}_{k-l-l'-1}$.

{\rm (i)} Suppose that $V_{l,l'+s,j}$ is contained in $V_{k}$ for all nonnegative
integer $s$ such that $l+l'+s\leq 2d$ and all $j$ in $\Delta _{i,\vert i \vert-s}$.
Then ${\goth d}\wedge V_{l-1,l',i}$ is contained in $V_{k}$.
   
{\rm (ii)} Suppose that $V_{l+s,l',j}$ is contained in $V_{k}$ for all nonnegative
integer $s$ such that $l+l'+s\leq 2d$ and all $j$ in $\Delta _{i,\vert i \vert-s}$.
Then ${\goth d}\wedge V_{l,l'-1,i}$ is contained in $V_{k}$.

{\rm (iii)} Suppose that $V'_{l,l'+s,j}$ is contained in $V_{k}$ for all nonnegative
integer $s$ such that $l+l'+s\leq 2d$ and all $j$ in $\Delta _{i',\vert i' \vert-s}$.
Then ${\goth d}\wedge V'_{l-1,l',i'}$ is contained in $V_{k}$.
   
{\rm (iv)} Suppose that $V'_{l+s,l',j}$ is contained in $V_{k}$ for all nonnegative
integer $s$ such that $l+l'+s\leq 2d$ and all $j$ in $\Delta _{i',\vert i' \vert-s}$.
Then ${\goth d}\wedge V'_{l,l'-1,i'}$ is contained in $V_{k}$.
\end{lemma}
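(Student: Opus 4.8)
The plan is to prove all four assertions by the same mechanism, namely by applying Proposition~\ref{pau2} to the $P_{\u}$- and $P_{-,\u}$-module structures already established, and then exploiting the symmetry $g$ exchanging ${\goth p}_{\u}$ and ${\goth p}_{-,\u}$ to deduce (ii) and (iv) from (i) and (iii). First I would treat (i). The space $V_{l-1,l',i}$ equals $\ex{l-1}{{\goth p}_{-,\u}}\wedge {\goth D}_{i}\wedge \ex{l'}{{\goth p}_{\u}}$, so I want to produce the extra ${\goth d}$ factor. The natural move is to regard $M := {\goth D}_{i}\wedge \ex{l'}{{\goth p}_{\u}}$ as a $P_{\u}$-submodule of the appropriate $\ex{i}{{\goth g}}$ (noting that ${\goth D}_{i}$ and $\ex{l'}{{\goth p}_{\u}}$ are each $P_{\u}$-stable) and to apply Proposition~\ref{pau2}(i), which gives that the $P_{\u}$-module generated by $\ex{l}{{\goth p}_{-,\u}}\wedge M$ contains $\ex{l-1}{{\goth p}_{-,\u}}\wedge {\goth d}\wedge M = {\goth d}\wedge V_{l-1,l',i}$.

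The point that must be checked is that the generating space $\ex{l}{{\goth p}_{-,\u}}\wedge M = V_{l,l',i}$ already lies in $V_{k}$, so that the $P_{\u}$-module it generates is contained in $V_{k}$ (here I use that $V_{k}$ is a $G$-module, hence in particular a $P_{\u}$-module, by definition of $V_{k,X}$). This is exactly where the hypothesis of (i) enters: it is stated for the family $V_{l,l'+s,j}$ with $j$ ranging over $\Delta_{i,\vert i\vert - s}$, not merely for the single term $V_{l,l',i}$. The reason the full family is needed is that applying $x\in {\goth p}_{\u}$ to $V_{l,l',i}$ does not keep the ${\goth p}_{\u}$-degree fixed: the adjoint action can send a factor of ${\goth D}_{i}$ into ${\goth p}_{\u}$, raising $l'$ and lowering $\vert i\vert$, so the $P_{\u}$-module generated by $V_{l,l',i}$ spreads across the terms $V_{l,l'+s,j}$ with $j\in\Delta_{i,\vert i\vert -s}$. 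Thus I would first observe that this entire orbit of terms is contained in $V_{k}$ by hypothesis, conclude that the $P_{\u}$-module generated by $V_{l,l',i}$ is in $V_{k}$, and only then invoke Proposition~\ref{pau2}(i) to extract ${\goth d}\wedge V_{l-1,l',i}$.

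For (iii) the argument is identical with ${\goth D}_{i}$ replaced by ${\goth z}\wedge{\goth D}_{i'}$: since ${\goth z}$ is central in ${\goth l}$ and in particular ${\goth p}_{\u}$-invariant, $M' := {\goth z}\wedge {\goth D}_{i'}\wedge \ex{l'}{{\goth p}_{\u}}$ is again a $P_{\u}$-module, $\ex{l}{{\goth p}_{-,\u}}\wedge M' = V'_{l,l',i'}$, and Proposition~\ref{pau2}(i) yields $\ex{l-1}{{\goth p}_{-,\u}}\wedge {\goth d}\wedge M' = {\goth d}\wedge V'_{l-1,l',i'}$. The hypothesis on the family $V'_{l,l'+s,j}$, $j\in\Delta_{i',\vert i'\vert-s}$, guarantees as before that the generating space lies in $V_{k}$. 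Then (ii) and (iv) follow by transporting (i) and (iii) through the automorphism $g$ of Proposition~\ref{pau2}(ii), which swaps ${\goth p}_{\u}$ with ${\goth p}_{-,\u}$, fixes ${\goth h}$, and hence fixes ${\goth l}$, ${\goth z}$, ${\goth d}$ and each ${\goth D}_{i}$: under $g$ the roles of $l$ and $l'$ interchange, $V_{l,l'-1,i}$ corresponds to the transported version of $V_{l-1,l',i}$, and the $P_{-,\u}$-statement of Proposition~\ref{pau2}(ii) delivers the ${\goth d}$ factor on the ${\goth p}_{\u}$ side.

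The main obstacle I anticipate is purely bookkeeping rather than conceptual: one must verify carefully that the $P_{\u}$-module generated by $V_{l,l',i}$ is contained in the span of exactly the terms $V_{l,l'+s,j}$ with $j\in\Delta_{i,\vert i\vert-s}$ and no others, i.e. that applying ${\goth p}_{\u}$ can only move degree from ${\goth D}_{i}$ into $\ex{\bullet}{{\goth p}_{\u}}$ (and never increase $\vert i\vert$ or touch the $\ex{l}{{\goth p}_{-,\u}}$ factor in a way that escapes the listed family). This requires knowing that $[{\goth p}_{\u},{\goth p}_{-,\u}]\subset {\goth l}\oplus{\goth p}_{\u}$ does not create problematic new ${\goth p}_{-,\u}$-components beyond the prescribed degree $l$, and that $[{\goth p}_{\u},{\goth d}_{r}]\subset {\goth d}_{r}\oplus{\goth p}_{\u}$ so that each ${\goth d}_{r}$-degree can only drop by the amount it transfers to ${\goth p}_{\u}$. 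Once these inclusions are recorded, matching them against the index set $\Delta_{i,\vert i\vert-s}$ is routine, and the rest of each assertion is a direct application of the cited results.
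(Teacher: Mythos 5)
Your proposal is correct and follows essentially the same route as the paper: the paper packages the family $\bigoplus_{s}\bigoplus_{j\in \Delta_{i,\vert i\vert-s}}{\goth D}_{j}\wedge \ex {l'+s}{{\goth p}_{\u}}$ into a single $P_{\u}$-submodule $M_{l',i}$, observes that the hypothesis places $\ex l{{\goth p}_{-,\u}}\wedge M_{l',i}$ inside $V_{k}$, and then applies Proposition~\ref{pau2} (and symmetrically with $P_{-,\u}$ for (ii) and (iv), which is just your automorphism argument absorbed into Proposition~\ref{pau2}(ii)); your explanation of why the whole index set $\Delta_{i,\vert i\vert-s}$ must appear in the hypothesis is exactly the right one. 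The only point to fix is internal: your parenthetical claim that ${\goth D}_{i}$ and hence ${\goth D}_{i}\wedge \ex {l'}{{\goth p}_{\u}}$ are $P_{\u}$-stable is false (indeed $[{\goth p}_{\u},{\goth d}_{r}]\subset {\goth p}_{\u}$, which is precisely the degree transfer you describe two sentences later), so the module $M$ fed into Proposition~\ref{pau2}(i) must be the $P_{\u}$-submodule generated by ${\goth D}_{i}\wedge \ex {l'}{{\goth p}_{\u}}$ --- equivalently the paper's $M_{l',i}$ --- rather than that space itself.
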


\begin{proof}
For $m$ in ${\Bbb N}$ and $j$ in ${\Bbb N}^{\n}$, set:
$$ M_{m,j} := \bigoplus _{t=0}^{\vert j \vert} \bigoplus _{\iota \in \Delta _{j,t}}
{\goth D}_{\iota }\wedge \ex {m+\vert j \vert-t}{{\goth p}_{\u}}, \quad  
M_{m,j,-} := \bigoplus _{t=0}^{\vert j \vert} \bigoplus _{\iota \in \Delta _{j,t}}
{\goth D}_{\iota }\wedge \ex {m+\vert j \vert-t}{{\goth p}_{-,\u}} ,$$
$$ M'_{m,j} := \bigoplus _{t=0}^{\vert j \vert} \bigoplus _{\iota \in \Delta _{j,t}}
{\goth z}\wedge {\goth D}_{\iota }\wedge \ex {m+\vert j \vert-t}{{\goth p}_{\u}}, \quad
M'_{m,j,-} := \bigoplus _{t=0}^{\vert j \vert} \bigoplus _{\iota \in \Delta _{j,t}}
{\goth z}\wedge {\goth D}_{\iota }\wedge \ex {m+\vert j \vert-t}{{\goth p}_{-,\u}} .$$
Then $M_{m,j}$ is a $P_{\u}$-submodule of $\ex {m+\vert j \vert}{{\goth g}}$,
$M_{m,j,-}$ is a $P_{-,\u}$-submodule of $\ex {m+\vert j \vert}{{\goth g}}$, 
$M'_{m,j}$ is a $P_{\u}$-submodule of $\ex {m+\vert j \vert+1}{{\goth g}}$, 
$M'_{m,j,-}$ is a $P_{-,\u}$-submodule of $\ex {m+\vert j \vert+1}{{\goth g}}$.

(i) By hypothesis,
$$ V_{k} \supset \ex l{{\goth p}_{-,\u}}\wedge M_{l',i}\supset V_{l,l',i} .$$
Then by Proposition~\ref{pau2}(i), ${\goth d}\wedge V_{l-1,l',i}$ is contained in
$V_{k}$.

(ii) By hypothesis,
$$ V_{k} \supset \ex {l'}{{\goth p}_{\u}}\wedge M_{l,i,-}\supset V_{l,l',i} .$$
Then by Proposition~\ref{pau2}(ii), ${\goth d}\wedge V_{l,l'-1,i}$ is contained in
$V_{k}$.

(iii) By hypothesis,
$$ V_{k} \supset \ex l{{\goth p}_{-,\u}}\wedge M'_{l',i'}\supset V'_{l,l',i'} .$$
Then by Proposition~\ref{pau2}(i), ${\goth d}\wedge V'_{l-1,l',i'}$ is contained in
$V_{k}$.

(iv) By hypothesis,
$$ V_{k} \supset \ex {l'}{{\goth p}_{\u}}\wedge M'_{l',i',-}\supset V'_{l,l',i'} .$$
Then by Proposition~\ref{pau2}(ii), ${\goth d}\wedge V'_{l,l'-1,i'}$ is contained in
$V_{k}$.
\end{proof}

\begin{coro}\label{cpt2}
Let $k=1,\ldots,n$, $(l,l')$ in ${\Bbb N}^{2}$ such that $l+l'< 2d$, $i$ in
${\Bbb I}_{k-l-l'-1}$ and $i'$ in ${\Bbb I}_{k-l-l'-2}$. Then ${\goth g}\wedge V_{l,l',i}$
and ${\goth g}\wedge V'_{l,l',i'}$ are contained in $V_{k}$.
\end{coro}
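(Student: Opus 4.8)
The plan is to split the adjoint wedge action of ${\goth g}$ along the parabolic grading ${\goth g} = {\goth p}_{-,\u}\oplus{\goth z}\oplus{\goth d}\oplus{\goth p}_{\u}$ (recall ${\goth l}={\goth z}\oplus{\goth d}$) and to treat each of the four factors wedged against $V_{l,l',i}$ and $V'_{l,l',i'}$ in turn. The organizing observation I would use throughout is that, because $E = {\goth z}\oplus{\goth p}_{\u}\oplus{\goth p}_{-,\u}$ with $\dim {\goth z}=1$, the decomposition of $\ex {k-\vert j\vert}E$ into its ${\goth z}$-, ${\goth p}_{\u}$- and ${\goth p}_{-,\u}$-components shows that $V_{k,{\goth p}}$ is, up to reordering of wedge factors, the direct sum of all cells $V_{a,b,j}$ and $V'_{a,b,j}$ with $j$ in the box (that is, $j_r\le n_r$) and of total degree $k$. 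All such cells therefore lie in $V_{k}$.

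First I would dispatch the three non-semisimple factors, which require no further input. For $x\in{\goth p}_{-,\u}$ one has $x\wedge V_{l,l',i}\subset V_{l+1,l',i}$; for $x\in{\goth p}_{\u}$ one has $x\wedge V_{l,l',i}\subset V_{l,l'+1,i}$; and for $x\in{\goth z}$ one has $x\wedge V_{l,l',i}\subset V'_{l,l',i}$. In each case the resulting cell has total degree $l+l'+\vert i\vert+1=k$ and retains the index $i$, whose components satisfy $i_r\le n_r$ since $i\in{\Bbb I}_{k-l-l'-1}$; hence it is a direct summand of $V_{k,{\goth p}}$ and lies in $V_{k}$. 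The primed statement is identical: ${\goth p}_{-,\u}\wedge V'_{l,l',i'}\subset V'_{l+1,l',i'}$ and ${\goth p}_{\u}\wedge V'_{l,l',i'}\subset V'_{l,l'+1,i'}$ are summands of $V_{k,{\goth p}}$, while ${\goth z}\wedge V'_{l,l',i'}=\{0\}$ because $\ex 2{{\goth z}}=\{0\}$.

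The only substantive factor is ${\goth d}$. To obtain ${\goth d}\wedge V_{l,l',i}\subset V_{k}$ I would invoke Lemma~\ref{lpt2}(i) with its first parameter taken to be $l+1$ rather than $l$: its conclusion then reads precisely ${\goth d}\wedge V_{l,l',i}\subset V_{k}$, and its hypothesis becomes that $V_{l+1,l'+s,j}\subset V_{k}$ for every $s\ge 0$ with $(l+1)+l'+s\le 2d$ and every $j\in\Delta_{i,\vert i\vert-s}$. This shift is legitimate because $l+l'<2d$ forces $(l+1)+l'\le 2d$, and because $i\in{\Bbb I}_{k-(l+1)-l'}={\Bbb I}_{k-l-l'-1}$ matches the index requirement of the lemma. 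In the same way, ${\goth d}\wedge V'_{l,l',i'}\subset V_{k}$ follows from Lemma~\ref{lpt2}(iii) applied with first parameter $l+1$, using $i'\in{\Bbb I}_{k-(l+1)-l'-1}={\Bbb I}_{k-l-l'-2}$.

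It remains to check the hypotheses just introduced, where the real—though routine—work sits. For each admissible $s$ and each $j\in\Delta_{i,\vert i\vert-s}$, the cell $V_{l+1,l'+s,j}$ has total degree $(l+1)+(l'+s)+(\vert i\vert-s)=l+l'+\vert i\vert+1=k$, and $j\le i$ componentwise yields $j_r\le i_r\le n_r$; hence $V_{l+1,l'+s,j}$ is once more a direct summand of $V_{k,{\goth p}}$ and lies in $V_{k}$. The same bookkeeping, now carrying the extra ${\goth z}$ that contributes $1$ to the degree, gives $V'_{l+1,l'+s,j}\subset V_{k}$ for the primed claim. The main obstacle is thus organizational rather than conceptual: one must keep two competing degree shifts aligned—the $+1$ from wedging with ${\goth g}$, and the trade-off between the exponent $s$ raised on ${\goth p}_{\u}$ and the compensating drop $\vert i\vert-s$ in the ${\goth D}$-index—so that every cell produced stays inside the box ${\Bbb I}$ and at total degree exactly $k$, which is exactly what makes it a summand of $V_{k,{\goth p}}$.
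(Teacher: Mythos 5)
Your proof is correct and follows essentially the same route as the paper's: the factors ${\goth p}_{\pm,\u}$ and ${\goth z}$ are absorbed directly into summands of $V_{k,{\goth p}}$, and the ${\goth d}$-factor is handled by Lemma~\ref{lpt2} with the first parameter shifted to $l+1$, the hypotheses being verified by the same degree-and-box bookkeeping. The only cosmetic difference is that you invoke only parts (i) and (iii) of Lemma~\ref{lpt2} where the paper also cites (ii) and (iv); either suffices.
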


\begin{proof}
Since $V_{l,l',i}$ is contained in $V_{k-1,{\goth p}}$, 
${\goth p}_{\u}\wedge V_{l,l',i}$ and ${\goth p}_{-,\u}\wedge V_{l,l',i}$ are contained
in $V_{k,{\goth p}}$. Moreover, for all nonnegative integer $s$ such that 
$l+l'+s+1\leq 2d$ and all $j$ in $\Delta _{i,\vert i \vert-s}$, 
${\goth p}_{\u}\wedge V_{l,l'+s,j}$, ${\goth p}_{-,\u}\wedge V_{l,l'+s,j}$, 
${\goth p}_{\u}\wedge V_{l+s,l',j}$, ${\goth p}_{-,\u}\wedge V_{l+s,l',j}$ are 
contained in $V_{k,{\goth p}}$. Then, by Lemma~\ref{lpt2},(i) and (ii), 
${\goth d}\wedge V_{l,l',i}$ is contained in $V_{k}$.

Since $V'_{l,l',i'}$ is contained in $V_{k-1,{\goth p}}$, 
${\goth p}_{\u}\wedge V'_{l,l',i'}$ and ${\goth p}_{-,\u}\wedge V'_{l,l',i'}$ are 
contained in $V_{k,{\goth p}}$. Moreover, for all nonnegative integer $s$ such that 
$l+l'+s+1\leq 2d$ and all $j$ in $\Delta _{i',\vert i' \vert-s}$, 
${\goth p}_{\u}\wedge V'_{l,l'+s,j}$, ${\goth p}_{-,\u}\wedge V'_{l,l'+s,j}$, 
${\goth p}_{\u}\wedge V'_{l+s,l',j}$, ${\goth p}_{-,\u}\wedge V'_{l+s,l',j}$ are 
contained in $V_{k,{\goth p}}$. Then, by Lemma~\ref{lpt2},(iii) and (iv), 
${\goth d}\wedge V'_{l,l',i'}$ is contained in $V_{k}$. By definition, 
$$ V_{k,{\goth p}} \supset {\goth p}_{-,\u}\wedge V_{k-1,{\goth p}} +
{\goth z}\wedge V_{k-1,{\goth p}} + {\goth p}_{\u}\wedge V_{k-1,{\goth p}}.$$
Hence ${\goth g}\wedge V_{l,l',i}$ and ${\goth g}\wedge V'_{l,l',i'}$ are containe
in $V_{k}$.  
\end{proof}

\begin{prop}\label{ppt2}
Let $k=1,\ldots,n$. Suppose that one of the following condition is satisfied:
\begin{itemize}
\item [{\rm (1)}] $\Pi $ is exceptional,
\item [{\rm (2)}] $\Pi $ has type ${\mathrm {D}_{\rg}}$ and $\n =3$,
\item [{\rm (3)}] $\Pi $ has classical type, $\n =2$, $2d+n_{1}$ and $2d+n_{2}$ are 
bigger than $n$.
\end{itemize}
Then $V_{k} = \ex k{{\goth g}}$.
\end{prop}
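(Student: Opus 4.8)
The plan is to prove $V_k=\ex k{{\goth g}}$ under each hypothesis by building up, degree by degree, the full space $\ex k{{\goth g}}$ from the generating subspace $V_{k,{\goth p}}$, exploiting the machinery of Corollary~\ref{cpt2}. The starting observation is that $\ex k{{\goth g}}$ decomposes as the direct sum of the spaces $V_{l,l',i}$ and $V'_{l,l',i'}$ ranging over all admissible $(l,l',i)$ and $(l,l',i')$ with $l+l'+|i|=k$ and $l+l'+|i'|+1=k$ (the prime version carrying a factor of ${\goth z}$). Indeed, writing ${\goth g}={\goth p}_{-,\u}\oplus{\goth z}\oplus{\goth d}\oplus{\goth p}_{\u}$ and expanding the exterior power, every basis wedge lands in exactly one such summand, so it suffices to show that each $V_{l,l',i}$ and each $V'_{l,l',i'}$ lies in $V_k$.

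First I would set up an induction, most naturally on $k$, with the base case $k=1$ handled directly as in Proposition~\ref{ppt1} (where ${\goth g}$ simple gives $V_1=\ex 1{{\goth g}}={\goth g}$). For the inductive step I would use Corollary~\ref{cpt2}: whenever $l+l'<2d$ and the index $i$ lies in ${\Bbb I}_{k-l-l'-1}$ (respectively $i'\in{\Bbb I}_{k-l-l'-2}$ for the primed case), the corollary gives ${\goth g}\wedge V_{l,l',i}\subset V_k$ and ${\goth g}\wedge V'_{l,l',i'}\subset V_k$. Since ${\goth g}\wedge V_{l,l',i}$ already fills out the full ${\goth d}$-direction (and more), this lets me promote a summand with a smaller ${\goth d}$-degree into one with ${\goth g}$ in place of one missing wedge factor, thereby reaching summands $V_{l,l',i}$ with the correct total degree $k$. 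The combinatorial heart is to check that every target summand $V_{l,l',i}$ with $l+l'+|i|=k$ is reachable this way — that is, that one can always peel off a single ${\goth g}$-factor (from ${\goth p}_{-,\u}$, ${\goth z}$, or ${\goth p}_{\u}$) and land in a situation where Corollary~\ref{cpt2} applies with $l+l'<2d$ and the index constraint satisfied.

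This is where the three hypotheses enter, and the main obstacle. In case (1), $\Pi$ exceptional, and case (2), type ${\mathrm D}_{\rg}$ with $\n=3$, I would invoke the maximal-parabolic partial result: the subsets $X$ with $|X|=\rg-1$ have been analyzed in Subsection~\ref{pt1} and~\ref{pt2}, and the corank-one case reduces (via Remark~\ref{rpt1} for connected $X$ and Corollary~\ref{cpt2} for the disconnected ${\mathrm D},{\mathrm E}$ cases) to verifying the reachability combinatorics for the specific root-system data $n_1,\dots,n_\n$ and $d$. Case (3), classical type with $\n=2$, carries the explicit numerical hypothesis $2d+n_1>n$ and $2d+n_2>n$; I expect this inequality is precisely what guarantees that for every summand $V_{l,l',i}$ of total degree $k\le n$ there is enough room (either $l<d$ or $l'<d$, equivalently $l+l'<2d$) to apply Corollary~\ref{cpt2} after removing one factor, so that no index is forced outside the admissible range ${\Bbb I}$.

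The hard part will be the bookkeeping that turns "$l+l'<2d$ and the index lies in ${\Bbb I}_{k-l-l'-1}$'' into a statement that covers \emph{every} summand appearing in $\ex k{{\goth g}}$. Concretely, the delicate summands are the extremal ones where $l+l'$ is close to $2d$ (so Corollary~\ref{cpt2} cannot be applied directly) or where some component $i_j$ has hit its ceiling $n_j$; for these one must argue that the ${\goth d}$-degree $|i|$ is correspondingly large, and then reach them by removing a ${\goth d}$-factor rather than a ${\goth p}_{\pm,\u}$- or ${\goth z}$-factor, using the induction hypothesis on $k$ together with $V_{k-1,{\goth p}}\supset{\goth p}_{\pm,\u}\wedge V_{k-1,{\goth p}}$ to climb back up. The inequalities in hypothesis (3) — and the specific combinatorics of the extremities of the Dynkin diagram in (1) and (2) — are exactly the input that makes this climbing terminate at $\ex k{{\goth g}}$ for all $k\le n$.
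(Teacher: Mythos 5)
Your overall strategy --- induction on $k$, reduction via Lemma~\ref{lint} to showing ${\goth g}\wedge V_{k-1,{\goth p}}\subset V_{k}$, decomposition into the summands $V_{l,l',i}$ and $V'_{l,l',i'}$, and Corollary~\ref{cpt2} as the main engine --- is the paper's. But two steps are genuinely off. First, your opening claim that $\ex k{{\goth g}}$ itself is the direct sum of the admissible summands $V_{l,l',i}$ and $V'_{l,l',i'}$ is false: $\ex k{{\goth g}}$ also contains pieces $\ex {i_1}{{\goth d}_1}\wedge\cdots$ with $n_j<i_j\leq \dim {\goth d}_j$, which do not occur in $V_{k,{\goth p}}$. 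The space to decompose is $V_{k-1,{\goth p}}$ (whose indices are admissible by definition); the goal is ${\goth g}\wedge V_{l,l',i}\subset V_{k}$ for each of \emph{its} summands, which together with Lemma~\ref{lint} and $V_{k-1}=\ex {k-1}{{\goth g}}$ finishes the step.

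Second, you misidentify what the hypotheses buy. For (1) and (2) the relevant input is purely numerical, from Proposition~\ref{prs}: in those cases $2d>n$, so every summand of $V_{k-1,{\goth p}}$ has $l+l'\leq k-1\leq n-1<2d$ and Corollary~\ref{cpt2} applies outright; there is no residual ``reachability combinatorics of the extremities'' to check. For (3), the inequalities $2d+n_{1}>n$ and $2d+n_{2}>n$ do \emph{not} guarantee $l+l'<2d$, contrary to your claim; the extremal case $l=l'=d$ must be treated separately. There the inequalities force $i_{1}<n_{1}$ and $i_{2}<n_{2}$ (since $i_{j}\leq \vert i \vert = k-2d-1\leq n-2d-1<n_{j}$), so wedging with ${\goth p}_{\pm,\u}$ gives $0$, wedging with ${\goth z}$ gives a $V'$-summand, and wedging with ${\goth d}_{j}$ keeps the index inside ${\Bbb I}$ --- hence ${\goth g}\wedge V_{d,d,i}\subset V_{k,{\goth p}}$ by definition, with no appeal to Corollary~\ref{cpt2}. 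Your alternative of ``removing a ${\goth d}$-factor because $\vert i \vert$ is correspondingly large'' points the wrong way: the point of hypothesis (3) is precisely that $\vert i \vert$ is small and the ceilings $n_{j}$ are not reached. Until these two case analyses are pinned down the argument is incomplete.
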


\begin{proof}
Prove the proposition by induction on $k$. For $k=1$, it is true since ${\goth g}$
is simple. Suppose $k>1$ and the proposition true for $k-1$. By Lemma~\ref{lint} and
the induction hypothesis, it is sufficient to prove that
${\goth g}\wedge V_{k-1,{\goth p}}$ is contained in $V_{k}$. As a matter of fact, we
have to prove that ${\goth g}\wedge V_{l,l',i}$ and ${\goth g}\wedge V'_{l,l',i'}$ are
contained in $V_{k}$ for $(l,l')$ in ${\Bbb N}^{2}$ such that $l+l'\leq 2d$, $i$ in
${\Bbb I}_{k-l-l'-1}$ and $i'$ in ${\Bbb I}_{k-l-l'-2}$ since
$$ V_{k-1,{\goth p}} = \bigoplus _{t=0}^{2d} \bigoplus _{(l,l')\in {\Bbb N}^{2}_{t}}
\bigoplus _{i\in {\Bbb I}_{k-t-1}} V_{l,l',i} \oplus
\bigoplus _{t=0}^{2d} \bigoplus _{(l,l')\in {\Bbb N}^{2}_{t}}
\bigoplus _{i'\in {\Bbb I}_{k-t-2}} V'_{l,l',i'} .$$

Let $(l,l')$ be in ${\Bbb N}^{2}$ such that $l+l'\leq 2d$, $i$ in ${\Bbb I}_{k-l-l'-1}$ 
and $i'$ in ${\Bbb I}_{k-l-l'-2}$. If Condition (1) or Condition (2) is satisfied, then 
$l+l'<2d$ by Proposition~\ref{prs}. As a result, by Corollary~\ref{cpt2}, 
${\goth g}\wedge V_{l,l',i}$ and ${\goth g}\wedge V'_{l,l',i'}$ are contained in $V_{k}$.
If Condition (3) is satisfied, $l+l'< 2d$ or $l+l'=2d$, $i_{1}<n_{1}$, $i_{2}<n_{2}$. In 
the first case, by Corollary~\ref{cpt2}, ${\goth g}\wedge V_{l,l',i}$ and 
${\goth g}\wedge V'_{l,l',i'}$ are contained in $V_{k}$. In the second case, 
${\goth g}\wedge V_{l,l',i}$ and ${\goth g}\wedge V'_{l,l',i'}$ are contained in 
$V_{k,{\goth p}}$, whence the proposition.
\end{proof}

\begin{rema}\label{rpt2}
By the proof of Proposition~\ref{ppt2}, when $\n=2$, 
for $k=1,\ldots,\inf\{2d+n_{1}-1,2d+n_{2}-1\}$, $V_{k}=\ex k{{\goth g}}$. 
\end{rema}

\subsection{A second particular case } \label{pt3}
In this subsection, $\vert X \vert = \rg -1$, $\Pi $ has classical type, $\n = 2$ and
$2d+n_{1}\leq n$. By Proposition~\ref{prs}, $2d+n_{2}>n$, $\rg \geq 6$ for $\Pi $ of
type ${\mathrm {A}}_{\rg}$, $\rg \geq 7$ for $\Pi $ of type ${\mathrm {B}}_{\rg}$ or
${\mathrm {C}}_{\rg}$, $\rg \geq 8$ for $\Pi $ of type ${\mathrm {D}}_{\rg}$.

For $i=(i_{0},i_{1},i_{2},i_{3},i_{4})$ in ${\Bbb N}^{5}$, set:
$$ C_{i} := \ex {i_{0}}{{\goth z}}\wedge \ex {i_{1}}{{\goth d}_{1}}\wedge
\ex {i_{2}}{{\goth d}_{2}}\wedge \ex {i_{3}}{{\goth p}_{-,\u}}\wedge
\ex {i_{4}}{{\goth p}_{\u}} .$$

Let  $k=2d+n_{1},\ldots,n$ and $j=k-2d-n_{1}$. Set:
$$ \iota := (0,n_{1},j,d,d), \quad \iota ':= (1,n_{1},j-1,d,d), $$
$$ \iota _{+}:= (0,n_{1}+1,j-1,d,d), \quad \iota '_{+}:= (1,n_{1}+1,j-2,d,d), \quad
\kappa  := (0,n_{1}+1,j,d-1,d), $$
$$  \kappa ':= (1,n_{1}+1,j-1,d-1,d), \quad  \kappa _{-}  := (0,n_{1}+1,j,d,d-1), \quad
\kappa '_{-}:= (1,n_{1}+1,j-1,d,d-1) .$$

\begin{lemma}\label{lpt3}
Denote by $M_{\iota }$ and $M_{\iota '}$ the $G$-submodules of $\ex k{{\goth g}}$
generated by $C_{\iota }$ and $C_{\iota '}$  respectively.

{\rm (i)} The subspace $M_{\iota }$ of $\ex k{{\goth g}}$ contains $C_{\kappa }$ and
$C_{\kappa _{-}}$, and $M_{\iota '}$ contains $C_{\kappa '}$ and $C'_{\kappa '_{-}}$.

{\rm (ii)} The spaces $C_{\iota _{+}}$ and $C_{\iota '_{+}}$ are contained in 
$M_{\iota }$ and $M_{\iota '}$ respectively.
\end{lemma}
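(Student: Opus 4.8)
The plan is to use Proposition~\ref{pau2} as the sole conversion engine, together with one structural observation: since $\dim {\goth p}_{\u}=\dim {\goth p}_{-,\u}=d$, the factor $\ex d{{\goth p}_{\u}}$ is the top exterior power, so if $M$ is any wedge of ${\goth l}$-type factors with $\ex d{{\goth p}_{\u}}$, then ${\goth p}_{\u}$ annihilates $M$: one has $[{\goth p}_{\u},{\goth l}\oplus {\goth p}_{\u}]\subset {\goth p}_{\u}$, and the extra ${\goth p}_{\u}$-vector produced by the derivation overflows into $\ex {d+1}{{\goth p}_{\u}}=\{0\}$. Hence such an $M$ is a $P_{\u}$-submodule, and the mirror statement holds for $\ex d{{\goth p}_{-,\u}}$ and $P_{-,\u}$. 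Recall also $\dim {\goth z}=1$.

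For part (i) this is immediate. Write $C_{\iota}=\ex d{{\goth p}_{-,\u}}\wedge M$ with $M:=\ex {n_{1}}{{\goth d}_{1}}\wedge \ex j{{\goth d}_{2}}\wedge \ex d{{\goth p}_{\u}}$, a $P_{\u}$-submodule by the observation. By Proposition~\ref{pau2}(i) the $P_{\u}$-module generated by $C_{\iota}$, hence $M_{\iota}$, contains $\ex {d-1}{{\goth p}_{-,\u}}\wedge {\goth d}\wedge M$; selecting the ${\goth d}_{1}$-summand of ${\goth d}={\goth d}_{1}\oplus {\goth d}_{2}$ gives exactly $C_{\kappa}$. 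Writing instead $C_{\iota}=\ex d{{\goth p}_{\u}}\wedge N$ and using Proposition~\ref{pau2}(ii) yields $C_{\kappa _{-}}\subset M_{\iota}$. The primed cases are identical, carrying the $L$-invariant factor $\ex 1{{\goth z}}$ as a spectator, so $C_{\kappa '},C_{\kappa '_{-}}\subset M_{\iota '}$.

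Part (ii) is the crux. Proposition~\ref{pau2} only trades one ${\goth p}_{\pm,\u}$-degree for one ${\goth d}$-degree, whereas passing from $\iota$ to $\iota _{+}$ keeps both ${\goth p}_{\pm,\u}$ at the top degree $d$ and merely moves one unit of degree from ${\goth d}_{2}$ to ${\goth d}_{1}$; since $[{\goth d}_{1},{\goth d}_{2}]=0$ this cannot be achieved in a single step, and a short weight count shows $\iota _{+}$ and $\iota$ are not even linked by one root vector. The plan is to first use part (i) to reach $C_{\kappa}\subset M_{\iota}$, which frees one ${\goth p}_{-,\u}$-slot. As removing $\beta$ disconnects $\Pi$ into $X_{1},X_{2}$, the root $\beta$ is adjacent to $X_{2}$, so there is $\alpha \in <X_{2}>$ with $\beta +\alpha \in {\cal R}$, and then $[x_{-(\beta +\alpha )},x_{\alpha }]\in \k^{*}x_{-\beta }$. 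I apply $x_{-(\beta +\alpha )}\in {\goth p}_{-,\u}$ to a generator of $C_{\kappa}$ whose ${\goth p}_{-,\u}$-factor is the top power $\ex {d-1}{E_{-}}$ of the codimension-one subspace $E_{-}\subset {\goth p}_{-,\u}$ of Subsection~\ref{au2} (which has dimension $d-1$ and omits $x_{-\beta }$) and whose ${\goth d}_{2}$-factor contains $x_{\alpha }$. The derivation turns $x_{\alpha }$ into $x_{-\beta }$, so $\ex {d-1}{E_{-}}\wedge x_{-\beta }=\ex d{{\goth p}_{-,\u}}$ and the ${\goth d}_{2}$-degree drops to $j-1$, producing a nonzero element of $C_{\iota _{+}}$; most remaining terms vanish because the new ${\goth p}_{-,\u}$-vectors they create already occur in $\ex {d-1}{E_{-}}$.

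The main obstacle is that the same derivation also produces terms in other summands of the decomposition $\ex k{{\goth g}}=\bigoplus _{i'}C_{i'}$, and these cannot all be pushed back into $M_{\iota}$ by Proposition~\ref{pau2} (a second inward step would need a top power already spent in the first). I therefore isolate $C_{\iota _{+}}$ through the orthogonality machinery. By Lemma~\ref{loc2}(ii) the summands $C_{i'}$ are mutually orthogonal except for the pairing $i'\leftrightarrow (i')^{*}$ swapping the last two indices, and here $\iota ^{*}=\iota$ and $(\iota _{+})^{*}=\iota _{+}$ since $i_{3}=i_{4}=d$; hence $C_{\iota _{+}}$ is orthogonal to every $C_{i'}$ with $i'\neq \iota _{+}$, and the form is nondegenerate on $C_{\iota _{+}}$. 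By Lemma~\ref{loc1}, $C_{\iota _{+}}\subset M_{\iota}$ is equivalent to $C_{\iota _{+}}\perp M_{\iota}^{\perp}$, where $M_{\iota}^{\perp}$ is the biggest $G$-submodule contained in $C_{\iota}^{\perp}=\bigoplus _{i'\neq \iota}C_{i'}$; so it suffices to show $M_{\iota}^{\perp}$ has no $C_{\iota _{+}}$-component. This reduces the problem to the reverse linkage — that any $G$-submodule with a nonzero $C_{\iota _{+}}$-component necessarily acquires a $C_{\iota}$-component — which is established by tracking, exactly as in the proof of Corollary~\ref{cau2}, which summands $C_{i'}$ are connected to $C_{\iota}$ under the root vectors $x_{\pm (\beta +\alpha )}$. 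This bookkeeping is the delicate step; once it is done, the primed statement $C_{\iota '_{+}}\subset M_{\iota '}$ follows verbatim with the ${\goth z}$-factor as a spectator.
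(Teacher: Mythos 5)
Your part (i) is correct and is exactly the paper's argument: write $C_{\iota}$ (resp.\ $C_{\iota'}$) with one top exterior power $\ex d{{\goth p}_{\u}}$ or $\ex d{{\goth p}_{-,\u}}$ peeled off, observe the rest is a $P_{\u}$- (resp.\ $P_{-,\u}$-) submodule, and apply Proposition~\ref{pau2}. For part (ii) you also land on the correct reduction --- pass to $M_{\iota}^{\perp}$, the biggest $G$-submodule of $C_{\iota}^{\perp}$, and show it has no component along $C_{\iota_{+}}$ --- but the step that actually closes the argument is missing. You reduce to the claim that a $G$-submodule with a nonzero $C_{\iota_{+}}$-component must acquire a $C_{\iota}$-component, propose to prove it by root-vector bookkeeping with $x_{\pm(\beta+\alpha)}$ ``as in Corollary~\ref{cau2}'', and then flag this as ``the delicate step'' without carrying it out. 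That is a genuine gap, and the target is also mischosen: as you yourself observe, $\iota_{+}$ and $\iota$ are not linked by a single root vector (no root vector carries ${\goth d}_{1}$ into ${\goth d}_{2}$), so reaching $C_{\iota}$ from $C_{\iota_{+}}$ needs at least two derivation steps, and the cross terms of a two-step computation are exactly the obstruction you already identified in the forward direction; nothing in your write-up controls them.

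The paper closes the loop differently. If $C_{\iota_{+}}\not\subset M_{\iota}$, then $M_{\iota}^{\perp}$ has a nonzero component along $C_{\iota_{+}}$, and since $\ex d{{\goth p}_{-,\u}}\wedge \ex d{{\goth p}_{\u}}$ is one-dimensional this component has the form $\ex d{{\goth p}_{-,\u}}\wedge M\wedge \ex d{{\goth p}_{\u}}$ with $\{0\}\neq M\subset \ex {n_{1}+1}{{\goth d}_{1}}\wedge \ex {j-1}{{\goth d}_{2}}$. The key move is then to apply Proposition~\ref{pau2} \emph{to the $G$-module $M_{\iota}^{\perp}$ itself}: both outer factors are at top degree, so $M\wedge \ex d{{\goth p}_{\u}}$ is a $P_{\u}$-module and $\ex d{{\goth p}_{-,\u}}\wedge M$ is a $P_{-,\u}$-module, whence
$$ M_{\iota}^{\perp}\supset \ex {d-1}{{\goth p}_{-,\u}}\wedge {\goth d}_{2}\wedge M\wedge \ex d{{\goth p}_{\u}} \quad\text{and}\quad M_{\iota}^{\perp}\supset \ex {d}{{\goth p}_{-,\u}}\wedge {\goth d}_{2}\wedge M\wedge \ex {d-1}{{\goth p}_{\u}} .$$
Since $j<\dim {\goth d}_{2}$ forces ${\goth d}_{2}\wedge M\neq \{0\}$, the module $M_{\iota}^{\perp}$ acquires a nonzero component along $C_{\kappa}+C_{\kappa_{-}}$; as $\kappa^{*}=\kappa_{-}$, this contradicts $C_{\kappa},C_{\kappa_{-}}\subset M_{\iota}$ from part (i). So the contradiction is obtained against $C_{\kappa}$ and $C_{\kappa_{-}}$ --- precisely the spaces part (i) was designed to put inside $M_{\iota}$ --- not against $C_{\iota}$, and the only engine needed is Proposition~\ref{pau2}, with no new root-vector computation. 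The primed case is verbatim the same with the ${\goth z}$-factor carried along. You should rework your part (ii) along these lines; as written it is incomplete.
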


\begin{proof}
(i) The subspaces of $\ex {k-d}{{\goth g}}$, 
$$ \ex {n_{1}}{{\goth d}_{1}}\wedge \ex j{{\goth d}_{2}}\wedge \ex {d}{{\goth p}_{\u}}
\quad  \text{and} \quad
\ex {n_{1}}{{\goth d}_{1}}\wedge \ex j{{\goth d}_{2}}\wedge {\goth z}\wedge
\ex {d}{{\goth p}_{\u}},$$
are invariant under $P_{\u}$. So, by Proposition~\ref{pau2}(i), $M_{\iota }$ and
$M_{\iota '}$ contain $C_{\kappa }$ and $C_{\kappa '}$ respectively. The subspaces of
$\ex {k-d}{{\goth g}}$, 
$$ \ex d{{\goth p}_{-,\u}}\wedge \ex {n_{1}}{{\goth d}_{1}}\wedge \ex j{{\goth d}_{2}}
\quad  \text{and} \quad
\ex d{{\goth p}_{-,\u}}\wedge \ex {n_{1}}{{\goth d}_{1}}\wedge \ex j{{\goth d}_{2}}
\wedge {\goth z},$$
are invariant under $P_{-\u}$. So, by Proposition~\ref{pau2}(ii), $M_{\iota }$ and
$M_{\iota '}$ contain $C_{\kappa _{-}}$ and $C_{\kappa '_{-}}$ respectively.

(ii) For $i=(i_{0},i_{1},i_{2},i_{3},i_{4})$ in ${\Bbb N}^{5}$, set:
$ i^{*} := (i_{0},i_{1},i_{2},i_{4},i_{3},) $. By corollary ~\ref{coc2}(i), for $i,j$ in
${\Bbb N}^{5}$, $C_{i}$ is orthogonal to $C_{j}$ if and only if $j\neq i^{*}$.

Denote by $M_{\iota }^{\perp}$ and  $C_{\iota }^{\perp}$ the orthogonal complements to
$M_{\iota }$ and $C_{\iota }$ in $\ex k{{\goth g}}$ respectively. By Lemma~\ref{loc1},
$M_{\iota }^{\perp}$ is the biggest $G$-module contained in $C_{\iota }^{\perp}$. Suppose
that $C_{\iota _{+}}$ is not contained in $M_{\iota }$. A contradiction is expected. 
As $\ex k{{\goth g}}$ is the direct sum of $C_{i}, \, i\in {\Bbb N}^{5}_{k}$,
$C_{\iota }^{\perp}$ is the direct sum of
$C_{i}, \, i \in {\Bbb N}^{5}_{k}\setminus \{\iota \}$ since $\iota ^{*}=\iota $.
By (i), $M_{\iota }^{\perp}$ is contained in the sum of 
$C_{i}, \, i \in {\Bbb N}^{5}_{k}\setminus \{\iota ,\kappa ,\kappa _{-}\}$. Since
$\iota _{+}^{*}=\iota _{+}$, the orthogonal complement to $C_{\iota _{+}}$ is the
sum of $C_{i}, \, i \in {\Bbb N}^{5}_{k}\setminus \{\iota _{+}\}$. Then
$M_{\iota }^{\perp}$ is not contained in the direct sum of
$C_{i}, \, i \in {\Bbb N}^{5}_{k}\setminus \{\iota ,\kappa ,\kappa _{-},\iota _{+}\}$
since $C_{\iota _{+}}$ is not contained in $M_{\iota }$. Hence for some subspace $M$
of $\ex {n_{1}+1}{{\goth d}_{1}}\wedge \ex {j-1}{{\goth d}_{2}}$,
$$ M \neq \{0\} \quad  \text{and} \quad M_{\iota }^{\perp} \supset
\ex d{{\goth p}_{-,\u}}\wedge M\wedge \ex d{{\goth p}_{\u}} $$
since $\ex d{{\goth p}_{-,\u}}\wedge \ex d{{\goth p}_{\u}}$ has dimension $1$. As a
result, by Proposition~\ref{pau2}, (i) and (ii),
$$ M_{\iota }^{\perp} \supset
\ex {d-1}{{\goth p}_{-,\u}}\wedge {\goth d}_{2}\wedge M\wedge \ex d{{\goth p}_{\u}} $$
$$ M_{\iota }^{\perp} \supset
\ex d{{\goth p}_{-,\u}}\wedge {\goth d}_{2}\wedge M\wedge \ex {d-1}{{\goth p}_{\u}} $$
since $M\wedge \ex d{{\goth p}_{\u}}$ is a $P_{\u}$-submodule of
$\ex {d+j+n_{1}}{{\goth g}}$ and $M\wedge \ex d{{\goth p}_{-,\u}}$ is
a $P_{-,\u}$-submodule of $\ex {d+j+n_{1}}{{\goth g}}$. As $j$ is smaller than
$\dim {\goth d}_{2}$ and $M$ is different from zero, ${\goth d}_{2}\wedge M \neq \{0\}$.
Then $C_{\kappa }+C_{\kappa ,-}$ is not contained in $M_{\iota }$ since
$C_{\kappa }+C_{\kappa _{-}}$ is orthogonal to $C_{i}$ for all $i$ in
${\Bbb N}^{5}_{k}\setminus \{\kappa ,\kappa _{-}\}$, whence the contradiction.

Denote by $M_{\iota '}^{\perp}$ and  $C_{\iota '}^{\perp}$ the orthogonal complements to
$M_{\iota '}$ and $C_{\iota '}$ in $\ex k{{\goth g}}$ respectively. By Lemma~\ref{loc1},
$M_{\iota '}^{\perp}$ is the biggest $G$-module contained in $C_{\iota '}^{\perp}$. 
Suppose that $C_{\iota '_{+}}$ is not contained in $M_{\iota '}$. A contradiction is 
expected. As $\ex k{{\goth g}}$ is the direct sum of $C_{i}, \, i\in {\Bbb N}^{5}_{k}$,
$C_{\iota '}^{\perp}$ is the direct sum of 
$C_{i}, \, i \in {\Bbb N}^{5}_{k}\setminus \{\iota '\}$ since ${\iota '}^{*}=\iota '$.
By (i), $M_{\iota '}^{\perp}$ is contained in the sum of 
$C_{i}, \, i \in {\Bbb N}^{5}_{k}\setminus \{\iota ',\kappa ',\kappa '_{-}\}$. Since
${\iota _{+}'}^{*}={\iota '_{+}}$, the orthogonal complement to $C_{\iota '_{+}}$ is the
sum of $C_{i}, \, i \in {\Bbb N}^{5}_{k}\setminus \{\iota '_{+}\}$. Then
$M_{\iota '}^{\perp}$ is not contained in the direct sum of
$C_{i}, \, i \in {\Bbb N}^{5}_{k}\setminus \{\iota ',\kappa ',\kappa '_{-},\iota '_{+}\}$
since $C_{\iota '_{+}}$ is not contained in $M_{\iota '}$. Hence for some subspace $M'$
of $\ex {n_{1}+1}{{\goth d}_{1}}\wedge \ex {j-2}{{\goth d}_{2}}$,
$$ M '\neq \{0\} \quad  \text{and} \quad M_{\iota '}^{\perp} \supset
\ex d{{\goth p}_{-,\u}}\wedge {\goth z}\wedge M'\wedge \ex d{{\goth p}_{\u}} $$
since ${\goth z}\wedge \ex d{{\goth p}_{-,\u}}\wedge \ex d{{\goth p}_{\u}}$ has dimension
$1$. As a result, by Proposition~\ref{pau2}, (i) and (ii),
$$ M_{\iota '}^{\perp} \supset
\ex {d-1}{{\goth p}_{-,\u}}\wedge {\goth z}\wedge {\goth d}_{2}\wedge M' 
\wedge \ex d{{\goth p}_{\u}} $$
$$ M_{\iota '}^{\perp} \supset
\ex d{{\goth p}_{-,\u}}\wedge {\goth z}\wedge {\goth d}_{2}\wedge M'
\wedge \ex {d-1}{{\goth p}_{\u}} $$
since ${\goth z}\wedge M'\wedge \ex d{{\goth p}_{\u}}$ is a $P_{\u}$-submodule of
$\ex {d+j+n_{1}}{{\goth g}}$ and ${\goth z}\wedge M'\wedge \ex d{{\goth p}_{-,\u}}$ is
a $P_{-,\u}$-submodule of $\ex {d+i+n_{1}}{{\goth g}}$. As $j$ is smaller than
$\dim {\goth d}_{2}$ and $M'$ is different from zero, ${\goth d}_{2}\wedge M' \neq \{0\}$.
Then $C_{\kappa '}+C_{\kappa ',-}$ is not contained in $M_{\iota '}$ since
$C_{\kappa '}+C_{\kappa '_{-}}$ is orthogonal to $C_{i}$ for all $i$ in
${\Bbb N}^{5}_{k}\setminus \{\kappa ',\kappa '_{-}\}$, whence the contradiction.
\end{proof}

\begin{prop}\label{ppt3}
For $k=2d+n_{1},\ldots,n$, $V_{k}$ is equal to $\ex k{{\goth g}}$.
\end{prop}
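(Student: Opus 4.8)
The plan is to argue by induction on $k$, running from $k=2d+n_{1}$ up to $n$. For the base case $k=2d+n_{1}$ the relevant instance of the induction hypothesis is $V_{2d+n_{1}-1}=\ex {2d+n_{1}-1}{{\goth g}}$, which holds by Remark~\ref{rpt2} since $2d+n_{1}-1=\inf\{2d+n_{1}-1,2d+n_{2}-1\}$ (here $n_{1}<n_{2}$ because $2d+n_{1}\leq n<2d+n_{2}$). For the inductive step I would assume $V_{k-1}=\ex {k-1}{{\goth g}}$ and, exactly as at the start of the proof of Proposition~\ref{ppt2}, reduce via Lemma~\ref{lint} to proving that ${\goth g}\wedge V_{k-1,{\goth p}}$ is contained in $V_{k}$: indeed, applying Lemma~\ref{lint} with $W=V_{k-1}=\ex {k-1}{{\goth g}}$ (generated by $V_{k-1,{\goth p}}$) and $W'={\goth g}$ shows that $\ex k{{\goth g}}$ is the $G$-module generated by ${\goth g}\wedge V_{k-1,{\goth p}}$, so the inclusion in the $G$-module $V_{k}$ forces equality.

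Next I would use the decomposition of $V_{k-1,{\goth p}}$ into the summands $V_{l,l',i}$ and $V'_{l,l',i'}$ recalled in the proof of Proposition~\ref{ppt2}, and check that ${\goth g}$ wedged with each summand lands in $V_{k}$. For every summand with $l+l'<2d$ this is immediate from Corollary~\ref{cpt2}. Since $\ex l{{\goth p}_{-,\u}}$ and $\ex {l'}{{\goth p}_{\u}}$ vanish for $l,l'>d$, the only remaining summands are those with $l=l'=d$, namely $V_{d,d,i}$ with $i=(i_{1},i_{2})$, $i_{1}+i_{2}=k-2d-1$, and $V'_{d,d,i'}$ with $i'=(i'_{1},i'_{2})$, $i'_{1}+i'_{2}=k-2d-2$.

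For these I would wedge with each factor of the decomposition ${\goth g}={\goth d}_{1}\oplus {\goth d}_{2}\oplus {\goth z}\oplus {\goth p}_{\u}\oplus {\goth p}_{-,\u}$. Wedging $V_{d,d,i}$ with ${\goth p}_{\u}$ or ${\goth p}_{-,\u}$ gives $0$; wedging with ${\goth z}$ gives $V'_{d,d,i}\subset V_{k,{\goth p}}$; wedging with ${\goth d}_{2}$ gives $V_{d,d,(i_{1},i_{2}+1)}\subset V_{k,{\goth p}}$, which is legitimate because $i_{2}\leq k-2d-1\leq n-2d-1<n_{2}$ by the inequality $2d+n_{2}>n$ from Proposition~\ref{prs}; and wedging with ${\goth d}_{1}$ gives $V_{d,d,(i_{1}+1,i_{2})}\subset V_{k,{\goth p}}$ whenever $i_{1}<n_{1}$. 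The primed summands $V'_{d,d,i'}$ are treated identically, with the extra observation that wedging with ${\goth z}$ now gives $0$ since ${\goth z}$ is one-dimensional. Thus the entire argument reduces to a single obstruction: the terms ${\goth d}_{1}\wedge V_{d,d,i}$ with $i_{1}=n_{1}$ and ${\goth d}_{1}\wedge V'_{d,d,i'}$ with $i'_{1}=n_{1}$, which produce components carrying $n_{1}+1$ exterior factors from ${\goth d}_{1}$ and therefore do not lie in $V_{k,{\goth p}}$.

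This is precisely where Lemma~\ref{lpt3} intervenes, and it is the heart of the proof. Writing $j=k-2d-n_{1}$, the obstruction for $V_{d,d,i}$ occurs only when $i=(n_{1},j-1)$, so that ${\goth d}_{1}\wedge V_{d,d,(n_{1},j-1)}=C_{\iota _{+}}$, while $C_{\iota }=V_{d,d,(n_{1},j)}$ is a genuine summand of $V_{k,{\goth p}}$ (one checks $j<n_{2}$ exactly as above), hence $M_{\iota }\subset V_{k}$; Lemma~\ref{lpt3}(ii) then yields $C_{\iota _{+}}\subset M_{\iota }\subset V_{k}$. Likewise the obstruction for $V'_{d,d,i'}$ occurs only when $i'=(n_{1},j-2)$, giving ${\goth d}_{1}\wedge V'_{d,d,(n_{1},j-2)}=C_{\iota '_{+}}$, with $C_{\iota '}=V'_{d,d,(n_{1},j-1)}\subset V_{k,{\goth p}}$ so that $M_{\iota '}\subset V_{k}$, and Lemma~\ref{lpt3}(ii) gives $C_{\iota '_{+}}\subset M_{\iota '}\subset V_{k}$. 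Combining all cases shows ${\goth g}\wedge V_{k-1,{\goth p}}\subset V_{k}$, which closes the induction. The main difficulty is exactly the passage from $n_{1}$ to $n_{1}+1$ exterior powers of ${\goth d}_{1}$, i.e. Lemma~\ref{lpt3}(ii); everything else is degree bookkeeping controlled by the inequalities $2d+n_{1}\leq n<2d+n_{2}$.
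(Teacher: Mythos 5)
Your proof is correct and follows essentially the same route as the paper's: reduce via Lemma~\ref{lint} to showing ${\goth g}\wedge V_{k-1,{\goth p}}\subset V_{k}$, dispose of the summands with $l+l'<2d$ by Corollary~\ref{cpt2} and of those with $l=l'=d$, $i_{1}<n_{1}$ by the inequality $2d+n_{2}>n$ from Proposition~\ref{prs}, and resolve the single remaining obstruction $i_{1}=n_{1}$ by Lemma~\ref{lpt3}(ii). Your write-up only spells out more explicitly the factor-by-factor bookkeeping that the paper leaves implicit.
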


\begin{proof}
Prove the proposition by induction on $k$. Let $(l,l')$ be in ${\Bbb N}^{2}$ such
that $l+l'\leq 2d$, $i \in {\Bbb I}_{k-l-l'-1}$, $i' \in {\Bbb I}_{k-l-l'-2}$.
If $l+l'< 2d$ then ${\goth g}\wedge V_{l,l',i}$ and ${\goth g}\wedge V'_{l,l',i'}$ are
contained in $V_{k}$ by Corollary~\ref{cpt2}. If $l=l'=d$ and $i_{1}<n_{1}$ then
${\goth g}\wedge V_{l,l',i}$ is contained in $V_{k,{\goth p}}$ since $2d+n_{2}>n$ by 
Proposition~\ref{prs}. If $l=l'=d$ and $i'_{1}<n_{1}$ then ${\goth g}\wedge V'_{l,l',i'}$
is contained in $V_{k,{\goth p}}$ since $2d+n_{2}>n$ by Proposition~\ref{prs}. As a 
result, for $k=2d+n_{1}$, by Lemma~\ref{lint} and Remark~\ref{rpt2},
$V_{k}=\ex k{{\goth g}}$.

Suppose $k>2d+n_{1}$, $V_{k-1}=\ex {k-1}{{\goth g}}$, $i=(n_{1},k-2d-n_{1}-1)$ and
$i'=(n_{1},k-2d-n_{1}-2)$. By definition, $V_{k,{\goth p}}$ contains the subspaces of
$\ex k{{\goth g}}$, 
$${\goth p}_{-,\u}\wedge V_{d,d,i}, \quad {\goth p}_{\u}\wedge V_{d,d,i}, \quad
{\goth d}_{2}\wedge V_{d,d,i}, \quad {\goth p}_{-,\u}\wedge V'_{d,d,i'}, \quad 
{\goth p}_{\u}\wedge V'_{d,d,i'}, \quad {\goth d}_{2}\wedge V'_{d,d,i'} $$ 
since $i_{2}$ and $i'_{2}$ are smaller than $n_{2}$. By Lemma~\ref{lpt3}(ii),
$V_{k}$ contains ${\goth d}_{1}\wedge V_{d,d,i}$ and ${\goth d}_{1}\wedge V'_{d,d',i'}$.
Then $V_{k}$ contains ${\goth g}\wedge V_{d,d,i}$ and ${\goth g}\wedge V'_{d,d,i'}$. As a
result, by our previous remark, $V_{k}$ contains ${\goth g}\wedge V_{k-1,{\goth p}}$,
whence the proposition by Lemma~\ref{lint} and the induction hypothesis.
\end{proof}

\subsection{The general case} \label{pt4}
First, we consider the case when $X$ contains the extremities of $\Pi $.

\begin{lemma}\label{lpt4}
Suppose $\rg \geq 2$, $\n\geq 2$ and the extremities of $\Pi $ contained in $X$.
If $\vert X \vert$ is smaller than $\rg -1$ then for some $\beta $ in
$\Pi \setminus X$, $Y := \Pi \setminus \{\beta \}$ has two connected components, $X$ is
contained in $Y$ and a connected component of $Y$ is a connected component of $X$.
\end{lemma}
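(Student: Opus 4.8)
The plan is to argue combinatorially on the Dynkin diagram $\Pi $, which is a tree whose extremities all lie in $X$. Note first that $\vert X \vert < \rg -1$ is exactly the condition $\vert \Pi \setminus X \vert \geq 2$. The main reduction I would make is the following: it suffices to exhibit a vertex $\beta \in \Pi \setminus X$ of degree $2$ in $\Pi $ one of whose two branches is contained in $X$, where by a \emph{branch} of $\beta $ I mean a connected component of $\Pi \setminus \{\beta \}$. Indeed, removing a vertex of degree $2$ from a tree leaves exactly two connected components, so $Y = \Pi \setminus \{\beta \}$ has two components and $X \subseteq Y$ because $\beta \notin X$; and if a component $C$ of $Y$ satisfies $C \subseteq X$, then $C$ is automatically a connected component of $X$, since $C$ is connected and any vertex of $X$ adjacent to $C$ would lie in $Y$, hence in $C$. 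So such a $\beta $ gives the conclusion.

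To produce $\beta $ I would walk inward from an extremity, using that every extremity lies in $X$. If $\Pi $ is a path, that is of type ${\mathrm A}_{\rg}$, ${\mathrm B}_{\rg}$, ${\mathrm C}_{\rg}$, ${\mathrm F}_{4}$ or ${\mathrm G}_{2}$, write its vertices $\beta _{1},\ldots,\beta _{\rg}$ in order. Then $\beta _{1}$ and $\beta _{\rg}$ lie in $X$, and since $\vert \Pi \setminus X \vert \geq 2$ we have $X \neq \Pi $; let $\beta = \beta _{j}$ be the vertex of smallest index not in $X$. Because $\beta _{1},\beta _{\rg} \in X$ we get $2 \leq j \leq \rg -1$, so $\beta $ has degree $2$, and $\{\beta _{1},\ldots,\beta _{j-1}\}$ is a nonempty branch of $\beta $ contained in $X$. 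This disposes of the path case.

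The only real obstacle is the branched case, where $\Pi $ has type ${\mathrm D}_{\rg}$, ${\mathrm E}_{6}$, ${\mathrm E}_{7}$ or ${\mathrm E}_{8}$ and possesses a unique vertex $c$ of degree $3$; removing $c$ splits $\Pi $ into three arms $A_{1},A_{2},A_{3}$, each a path containing exactly one extremity of $\Pi $, and all three extremities lie in $X$. Here I would use the dichotomy: either every arm is contained in $X$, or some arm is not. In the first case $X \supseteq A_{1}\cup A_{2}\cup A_{3} = \Pi \setminus \{c\}$, so $\Pi \setminus X \subseteq \{c\}$ and $\vert \Pi \setminus X \vert \leq 1$, contradicting $\vert \Pi \setminus X \vert \geq 2$. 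Hence some arm $A_{i}$ contains a vertex not in $X$. Walking from the extremity lying in $A_{i}$ (which is in $X$) toward $c$, let $\beta $ be the first vertex encountered that is not in $X$; since $A_{i}$ already contains such a vertex, $\beta $ lies strictly inside $A_{i}$, so $\beta \neq c$ and $\beta $ has degree $2$, while the portion of $A_{i}$ running from that extremity up to but excluding $\beta $ is a nonempty branch of $\beta $ contained in $X$. By the reduction, this $\beta $ satisfies all the requirements. The whole argument is elementary; the one point needing care is precisely this count at the trivalent vertex, where the hypothesis $\vert X \vert < \rg -1$ is exactly what forbids the degenerate configuration $\Pi \setminus X = \{c\}$.
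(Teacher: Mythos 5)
Your proof is correct. The underlying idea is the same as the paper's -- walk inward from an extremity of $\Pi$ (which lies in $X$) and take the first vertex $\beta$ not in $X$, so that the component of $X$ containing that extremity becomes a full connected component of $\Pi\setminus\{\beta\}$ -- but you organize it differently. The paper runs a five-way case analysis on the Cartan type (non-${\mathrm D}{\mathrm E}$, ${\mathrm D}_{\rg}$, ${\mathrm E}_{6}$, ${\mathrm E}_{7}$, ${\mathrm E}_{8}$), and in the ${\mathrm E}$ cases simply lists the candidate vertices ($\beta_{3},\beta_{5}$, etc.) and checks one of them must be missing. You instead isolate a clean reduction (it suffices to find a degree-$2$ vertex of $\Pi\setminus X$ one of whose branches lies in $X$) and then treat only the dichotomy path versus trivalent diagram; the hypothesis $\vert X\vert<\rg-1$ enters exactly once, to rule out $\Pi\setminus X=\{c\}$ at the trivalent vertex, which is also where the paper uses it. Your version is more uniform and would apply verbatim to any tree-shaped diagram whose leaves lie in $X$; the paper's version is more explicit about which simple root $\beta$ is chosen in each type, which costs nothing here since the choice is never used later. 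Both arguments are complete; your verification that a component of $Y$ contained in $X$ is automatically a component of $X$ is a small point the paper leaves implicit.
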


\begin{proof}
Suppose $\vert X \vert < \rg -1$. We consider the following cases:
\begin{itemize}
\item [{\rm (1)}] $\Pi $ has not type ${\mathrm {D}}$, ${\mathrm {E}}$,  
\item [{\rm (2)}] $\Pi $ has type ${\mathrm {D}}_{\rg}$,
\item [{\rm (3)}] $\Pi $ has type ${\mathrm {E}}_{6}$,
\item [{\rm (4)}] $\Pi $ has type ${\mathrm {E}}_{7}$,  
\item [{\rm (5)}] $\Pi $ has type ${\mathrm {E}}_{8}$.  
\end{itemize}

(1) Let $X_{1}$ be the connected component of $X$ containing $\beta _{1}$. There is only
one element $\beta $ of $\Pi \setminus X$ not orthogonal to $X_{1}$. Then
$Y:=\Pi \setminus \{\beta \}$ has two connected components and $X_{1}$ is a connected
component of $Y$.

(2) Let $X_{1}$ be the connected component of $X$ containing $\beta _{1}$.  As
$\beta _{\rg}$ and $\beta _{\rg -1}$ are in $X$, for some $i$ smaller than $\rg -2$,
$\beta _{i}$ is not in $X$ since $\vert X \vert < \rg -1$. Then there is only one element
$\beta $ in $\Pi \setminus X$ not orthogonal to $X_{1}$ so that 
$Y:=\Pi \setminus \{\beta \}$ has two connected components and $X_{1}$ is a connected 
component of $Y$.

(3) As $\beta _{1}$, $\beta _{2}$, $\beta _{6}$ are in $X$, $\beta _{3}$ or
$\beta _{5}$ is not in $X$ since $\vert X \vert < \rg -1$. Setting 
$Y_{i}:=\Pi \setminus \{\beta _{i}\}$ for $i=3,5$, $Y_{i}$ has two connected components 
and for some $i$, $X$ is contained in $Y_{i}$ and a connected component of $X$ is a 
connected component of $Y_{i}$.

(4) As $\beta _{1}$, $\beta _{2}$, $\beta _{7}$ are in $X$, $\beta _{3}$ or
$\beta _{5}$ or $\beta _{6}$ is not in $X$ since $\vert X \vert < \rg -1$. Setting 
$Y_{i}:=\Pi \setminus \{\beta _{i}\}$ for $i=3,5,6$, $Y_{i}$ has two connected components
and for some $i$, $X$ is contained in $Y_{i}$ and a connected component of $X$ is a 
connected component of $Y_{i}$.

(5) As $\beta _{1}$, $\beta _{2}$, $\beta _{8}$ are in $X$, $\beta _{3}$ or
$\beta _{5}$ or $\beta _{6}$ or $\beta _{7}$ is not in $X$ since 
$\vert X \vert < \rg -1$. Setting $Y_{i}:=\Pi \setminus \{\beta _{i}\}$ for $i=3,5,6,7$, 
$Y_{i}$ has two connected components and for some $i$, $X$ is contained in $Y_{i}$ and a 
connected component of $X$ is a connected component of $Y_{i}$.
\end{proof}

\begin{prop}\label{ppt4}
Let $k=1,\ldots,n$. Suppose that Theorem~{\rm \ref{tint}} is true for the simple
algebras of rank smaller than $\rg$ and $X$ contains the extremities of $\Pi $. Then 
$V_{k}=\ex k{{\goth g}}$.
\end{prop}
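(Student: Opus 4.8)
The plan is to argue by descent on the corank $\rg-|X|$, feeding in the hypothesis that Theorem~\ref{tint} holds in rank $<\rg$. First I would dispose of the case $|X|=\rg-1$. Here ${\goth p}_X$ is maximal, and since $X$ contains every extremity of $\Pi$ while $X\neq\Pi$, the set $X$ cannot be connected (in a tree, a connected subgraph meeting every leaf is the whole tree), so $\n\geq2$. Then $V_k=\ex k{{\goth g}}$ is exactly what Proposition~\ref{ppt2} and Remark~\ref{rpt2}, together with Proposition~\ref{ppt3}, deliver: between them they settle every maximal parabolic with $\n\geq2$ --- exceptional type, type ${\mathrm D}$ with $\n=3$, and classical type with $\n=2$ --- over the full range $k=1,\dots,n$.

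Now I would assume $|X|<\rg-1$ and invoke Lemma~\ref{lpt4} to pick $\beta\in\Pi\setminus X$ for which $Y:=\Pi\setminus\{\beta\}$ has exactly two connected components $Y_1,Y_2$, with $X\subseteq Y$ and $Y_1$ a common component of $X$ and of $Y$. Since $|Y|=\rg-1$ and $\n_Y=2$, the propositions of the first paragraph applied to $Y$ give $V_{k,Y}=\ex k{{\goth g}}$, where $V_{k,Y}$ is the $G$-module generated by $V_{k,{\goth p}_Y}$. As $V_k$ is a $G$-module, it then suffices to prove the single inclusion $V_{k,{\goth p}_Y}\subseteq V_k$.

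For this I would write ${\goth d}_{Y_1}={\goth d}_1$ and ${\goth d}_{Y_2}$ for the simple factors of ${\goth d}_Y$ and put $X_2:=X\cap Y_2$; because $\n_X\geq2$ and $X\neq Y$, the set $X_2$ is a nonempty proper subset of $Y_2$ whose components are the factors ${\goth d}_2,\dots,{\goth d}_{\n}$. Splitting the root spaces of ${\goth p}_{X,\u}$ and ${\goth p}_{X,-,\u}$ along $<Y>\setminus<X>=<Y_2>\setminus<X_2>$, and noting $\dim{\goth z}_X=1+\dim{\goth z}_{X_2}$, yields the decomposition $E_X=E_Y\oplus E_{X_2}$ with $E_{X_2}\subseteq{\goth d}_{Y_2}$ the analogue of $E_X$ for the pair $({\goth d}_{Y_2},X_2)$. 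Substituting $\ex m{E_X}=\bigoplus_{c+c'=m}\ex c{E_Y}\wedge\ex{c'}{E_{X_2}}$ into the definition of $V_{k,{\goth p}_X}$ and collecting the factors that live in ${\goth d}_{Y_2}$ (namely ${\goth d}_2,\dots,{\goth d}_{\n}$ and $E_{X_2}$) would show that $V_{k,{\goth p}_X}$ contains $\ex a{{\goth d}_{Y_1}}\wedge\til V_b\wedge\ex c{E_Y}$ whenever $a\leq n_1$ and $a+b+c=k$, where $\til V_b$ is the analogue of $V_{b,{\goth p}_{X_2}}$ for $({\goth d}_{Y_2},X_2)$.

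Finally I would fix $a\leq n_{Y_1}$, $b\leq n_{Y_2}$ and $c=k-a-b\geq0$, and let $A$ be the connected subgroup of $G$ with Lie algebra ${\goth d}_{Y_2}$. Its image in ${\rm GL}({\goth d}_{Y_2})$ is the adjoint group, so the induction hypothesis (Theorem~\ref{tint} for the simple algebra ${\goth d}_{Y_2}$ of rank $|Y_2|<\rg$, in degree $b\leq n_{Y_2}$) makes the $A$-module generated by $\til V_b$ equal to $\ex b{{\goth d}_{Y_2}}$. Since $Y_1\perp Y_2$, $A$ acts trivially on ${\goth d}_{Y_1}$, and since ${\goth d}_{Y_2}\subseteq{\goth l}_Y$ it preserves ${\goth z}_Y,{\goth p}_{Y,\u},{\goth p}_{Y,-,\u}$ and hence $E_Y$; so $\ex a{{\goth d}_{Y_1}}\wedge\ex c{E_Y}$ is an $A$-module. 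Lemma~\ref{lint} then identifies the $A$-module generated by $\til V_b\wedge\ex a{{\goth d}_{Y_1}}\wedge\ex c{E_Y}$ with $\ex a{{\goth d}_{Y_1}}\wedge\ex b{{\goth d}_{Y_2}}\wedge\ex c{E_Y}$; its generator sits in $V_{k,{\goth p}_X}\subseteq V_k$ by the previous paragraph, and $V_k$ is an $A$-module, so the whole space sits in $V_k$. Summing over $a\leq n_{Y_1}$ and $b\leq n_{Y_2}$ gives $V_{k,{\goth p}_Y}\subseteq V_k$. The hard part, and the point on which everything turns, is the bookkeeping behind this last step: the splitting $E_X=E_Y\oplus E_{X_2}$ and, crucially, the exact agreement of the degree caps --- $n_1=n_{Y_1}$ on the ${\goth d}_{Y_1}$-factor and the bound $b\leq n_{Y_2}$ that keeps the appeal to Theorem~\ref{tint} on ${\goth d}_{Y_2}$ within its guaranteed range --- which is precisely what lets the full generating space $V_{k,{\goth p}_Y}$, and not some coarser space, be reached from $V_{k,{\goth p}_X}$.
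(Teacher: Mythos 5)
Your proposal is correct and follows essentially the same route as the paper: reduce to the maximal‑parabolic case via Propositions~\ref{ppt2}, \ref{ppt3} and Remark~\ref{rpt2}, choose $Y=\Pi\setminus\{\beta\}$ by Lemma~\ref{lpt4}, split $E_X=E_Y\oplus E_{X_2}$ so that $V_{k,{\goth p}_X}$ factors through the analogous generating spaces for the parabolic ${\goth q}={\goth p}\cap{\goth d}_{Y_2}$ of the lower‑rank simple algebra ${\goth d}_{Y_2}$ (the paper's ${\goth a}$ and $E'$), and then use the rank induction hypothesis together with Lemma~\ref{lint} to recover $V_{k,{\goth p}_Y}\subseteq V_k$. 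The only differences are cosmetic (notation, and your explicit justifications that $X$ is disconnected when $|X|=\rg-1$ and that it suffices to reach $V_{k,{\goth p}_Y}$, which the paper leaves implicit).
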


\begin{proof}
  As $X$ contains the extremities of $\Pi $ and is different from $\Pi $,
  $\rg \geq 3$ and  $\n\geq 2$. By Proposition~\ref{ppt2}, Remark~\ref{rpt2} and
  Proposition~\ref{ppt3}, $V_{k}=\ex k{{\goth g}}$ when $\vert X \vert = \rg -1$. In
  particular, $V_{k}=\ex k{{\goth g}}$ when $\rg =3$. Suppose $\rg > 3$ and
  $\vert X \vert < \rg -1$.

  Let $Y$ be as in Lemma~\ref{lpt4}. Then ${\goth d}_{Y}$ has two simple factors
  ${\goth d}_{1}$ and ${\goth a}$ and ${\goth d}_{1}$ is a simple factor of ${\goth d}$.
  Denote by $V_{k,Y}$ the $G$-submodule of $\ex k{{\goth g}}$ generated by
  $V_{k,{\goth p}_{Y}}$. Then, by Proposition~\ref{ppt2}, Remark~\ref{rpt2} and
  Proposition~\ref{ppt3}, $V_{k,Y}=\ex k{{\goth g}}$. The intersection
  ${\goth q} := {\goth a}\cap {\goth p}$ is a parabolic subalgebra of ${\goth a}$. Let
  $E'$ be the intersection of $E$ and ${\goth a}$. Then $E$ is the direct sum of $E'$
  and $E_{Y}$. As a result, setting  $n_{*} := \b a{}-\j a{}$,
$$ V_{k,{\goth p}} = \bigoplus _{i=0}^{n_{1}} \bigoplus _{j=0}^{n_{*}}
\ex i{{\goth d}_{1}}\wedge V_{j,{\goth q}}\wedge \ex {k-i-j}{E_{Y}} .$$
Let $A$ be the connected closed subgroup of $G$ whose Lie algebra is ${\goth a}$.
By the hypothesis, for $j=1,\ldots,n_{*}$, the $A$-submodule of 
$\ex j{{\goth a}}$, generated by $V_{j,{\goth q}}$, is equal to 
$\ex j{{\goth a}}$. Hence, by Lemma~\ref{lint}, $V_{k,{\goth p}_{Y}}$ is contained in
$V_{k}$ since ${\goth d}_{1}$ and $E_{Y}$ are invariant under $A$, whence
$V_{k}=\ex k{{\goth g}}$. 
\end{proof}

To finish the proof of Theorem~\ref{tint}, we have to consider the case when $X$ does
not contain all the extremities of $\Pi $.

\begin{lemma}\label{l2pt4}
Suppose that $X$ does not contain all the extremities of $\Pi $.

{\rm (i)} There exists a sequence 
$$ \poi X0{\subset \cdots \subset}{m}{}{}{} = \Pi $$
of connected subsets of $\Pi $ satisfying the following conditions:
\begin{itemize}
\item [{\rm (1)}] for $i=1,\ldots,m$, $\vert X_{i}\setminus X_{i-1} \vert=1$, 
\item [{\rm (2)}] $X$ contains the extremities of $X_{0}$.
\end{itemize}

{\rm (ii)} For $i=0,\ldots,m$, let ${\goth a}_{i}$ be the subalgebra of ${\goth g}$
generated by ${\goth g}^{\pm \beta }, \, \beta \in X_{i}$. Then ${\goth a}_{i}$ is a simple
algebra and ${\goth p}_{i}:={\goth p}\cap {\goth a}_{i}$ is a parabolic subalgebra of 
${\goth a}_{i}$.

{\rm (iii)} For $i=0,\ldots,m$, $E$ is the direct sum of $E_{i}:=E\cap {\goth a}_{i}$
and $E_{X_{i}}$.
\end{lemma}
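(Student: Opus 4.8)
The plan is to dispatch the three assertions in turn, the combinatorial statement (i) being the only substantial point; (ii) and (iii) are then root–space bookkeeping relative to the inclusion $X\subset X_{0}\subset X_{i}$.

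For (i) I would exploit that, ${\goth g}$ being simple, the Dynkin diagram $\Pi$ is a tree. I take $X_{0}$ to be the smallest connected subdiagram of $\Pi$ containing $X$ (the Steiner subtree of the terminal set $X$, which is well defined and unique since $\Pi$ is a tree and $X$ is nonempty, and which satisfies $X\subset X_{0}$). By minimality every extremity of $X_{0}$ lies in $X$: if a leaf $v$ of $X_{0}$ were not in $X$, then $X_{0}\setminus\{v\}$ would still be connected (deleting a leaf of a tree preserves connectedness) and would still contain $X$, contradicting minimality. This is condition (2). Moreover $X_{0}\neq\Pi$, for otherwise the extremities of $\Pi$ would all lie in $X$, against the hypothesis. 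Finally, starting from the proper connected subset $X_{0}$ and using that $\Pi$ is connected, at each stage there is a vertex of $\Pi$ outside $X_{i}$ but adjacent to it; adjoining such a vertex preserves connectedness and raises the cardinality by one. Iterating produces the chain $\poi X0{\subset \cdots \subset}{m}{}{}{} = \Pi$ with $\vert X_{i}\setminus X_{i-1}\vert=1$.

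For (ii), each $X_{i}$ is connected, so the root subsystem ${\cal R}_{X_{i}}$ is irreducible and ${\goth a}_{i}$, being the derived algebra ${\goth d}_{X_{i}}$ of the Levi factor ${\goth l}_{X_{i}}$, is simple, with Cartan subalgebra ${\goth h}\cap{\goth a}_{i}=\bigoplus_{\beta\in X_{i}}\k H_{\beta}$ and root system ${\cal R}_{X_{i}}$. Since $X\subset X_{i}$ we have $<X>\subset<X_{i}>$, and intersecting the root–space description ${\goth p}={\goth b}\oplus\bigoplus_{\alpha\in<X>}{\goth g}^{-\alpha}$ with ${\goth a}_{i}$ I read off
$$ {\goth p}\cap{\goth a}_{i} = ({\goth h}\cap{\goth a}_{i})\oplus\bigoplus_{\alpha\in<X_{i}>}{\goth g}^{\alpha}\oplus\bigoplus_{\alpha\in<X>}{\goth g}^{-\alpha}, $$
which contains the Borel subalgebra ${\goth b}\cap{\goth a}_{i}$ of ${\goth a}_{i}$ and is therefore the parabolic subalgebra of ${\goth a}_{i}$ attached to the subset $X$ of $X_{i}$; hence ${\goth p}_{i}$ is parabolic.

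For (iii), the Levi decomposition of ${\goth p}_{X_{i}}$ gives ${\goth g}={\goth d}_{X_{i}}\oplus E_{X_{i}}={\goth a}_{i}\oplus E_{X_{i}}$. Because $X\subset X_{i}$, the inclusions ${\cal R}_{+}\setminus<X_{i}>\subset{\cal R}_{+}\setminus<X>$ and ${\goth z}_{X_{i}}\subset{\goth z}_{X}$ show that $E_{X_{i}}\subset E_{X}=E$. Applying the modular law to $E_{X_{i}}\subset E$ inside ${\goth g}={\goth a}_{i}\oplus E_{X_{i}}$ yields $E = E\cap({\goth a}_{i}\oplus E_{X_{i}}) = (E\cap{\goth a}_{i})\oplus E_{X_{i}} = E_{i}\oplus E_{X_{i}}$, the sum being direct since ${\goth a}_{i}\cap E_{X_{i}}=\{0\}$. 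The main obstacle is the combinatorial core of (i): selecting a connected $X_{0}$ whose extremities fall inside $X$ while one can still climb to $\Pi$ one node at a time. The Steiner–subtree choice meets both demands at once and, pleasantly, sidesteps the type-by-type inspection of Dynkin diagrams; parts (ii) and (iii) are then routine.
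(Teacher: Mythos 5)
Your proof is correct and follows essentially the same route as the paper: $X_{0}$ is the minimal connected subset of $\Pi$ containing $X$ (whose extremities must lie in $X$ by minimality), the chain is grown one adjacent vertex at a time, (ii) follows from ${\goth p}_{i}$ containing the Borel subalgebra ${\goth b}\cap{\goth a}_{i}$, and (iii) rests on the decomposition ${\goth g}={\goth a}_{i}\oplus E_{X_{i}}$ together with $E_{X_{i}}\subset E$. The only cosmetic difference is in (iii), where you invoke the modular law while the paper verifies the splitting directly on root spaces and on $E\cap{\goth h}$; both arguments are valid.
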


\begin{proof}
(i) Define $X_{i}$ by induction on $i$. Let $X_{0}$ be a connected subset of $\Pi $, 
containing $X$ of minimal cardinality. By minimality of $\vert X_{0} \vert$, $X$ contains
the extremities of $X_{0}$. Suppose $i>0$ and $X_{i-1}$ defined. If $X_{i-1}=\Pi $
there is nothing to do. Suppose $X_{i-1}\neq \Pi $. As $\Pi $ is connected, there is 
some $\beta $ in $\Pi \setminus X_{i-1}$, not orthogonal to an extremity of $X_{i-1}$. 
Then $X_{i} := X_{i-1}\cup \{\beta \}$ is a connected subset of $\Pi $ since so is
$X_{i-1}$, whence the assertion.

(ii) As $X_{i}$ is connected, ${\goth a}_{i}$ is a simple algebra. For $\alpha $ in 
$<X_{i}>$, ${\goth g}^{\alpha }$ is contained in ${\goth a}_{i}\cap {\goth p}$. Hence 
${\goth p}_{i}$ contains the Borel subalgebra of ${\goth a}_{i}$ generated by 
${\goth h}\cap {\goth a}_{i}$ and ${\goth g}^{\beta }, \, \beta \in X_{i}$, whence the
assertion.

(iii) Let $\alpha $ be a positive root such that ${\goth g}^{\alpha }$ is contained in 
$E$. If $\alpha $ is in $<X_{i}>$ then ${\goth g}^{\alpha }$ and ${\goth g}^{-\alpha }$
are contained in $E_{i}$. Otherwise, ${\goth g}^{\alpha }$ and ${\goth g}^{-\alpha }$
are contained in $E_{X_{i}}$ by definition. 

Let $z$ be in $E\cap {\goth h}$. By definition, ${\goth h}\cap E_{X_{i}}$ is the 
orthogonal complement to ${\goth a}_{i}$ in ${\goth h}$. Then $z=z_{1}+z_{2}$ with 
$z_{1}$ in ${\goth a}_{i}\cap {\goth h}$ and $z_{2}$ in $E_{X_{i}}$. Hence $z_{1}$ is 
orthogonal to ${\goth a}_{i}\cap {\goth p}$. As a result, $z_{1}$ is in $E_{i}$ and 
$z$ is in $E_{i}+E_{X_{i}}$, whence the assertion.
\end{proof}

We can now give the proof of Theorem~\ref{tint}.

\begin{proof}
Prove the theorem by induction on $\rg$. First of all, for $X$ empty subset of $\Pi $,
$V_{k,{\goth p}_{X}}=\ex k{{\goth g}}$. For $\rg =1$, $n=1$. Hence the theorem is true in
this case and we can suppose $X$ nonempty and $\rg \geq 2$. By Proposition~\ref{ppt1}
and Remark~\ref{rpt1}, $V_{k}=\ex k{{\goth g}}$ when $X$ is connected. In particular,
the theorem is true in rank $2$. Then, by Proposition~\ref{ppt4}, the theorem is true
for $\rg =3$ since in this case $X$ contains all the extremities of $\Pi $ when it is
not connected.

Suppose $\rg > 3$ and the theorem true for the simple algebras of rank smaller than
$\rg$. By induction hypothesis and Proposition~\ref{ppt4}, $V_{k}=\ex k{{\goth g}}$
when $X$ contains all the extremities of $\Pi $. So, we can suppose that $X$ does not
contain all the extremities of $\Pi $. Let  $\poi X0{,\ldots,}{m}{}{}{}$ be as in
Lemma~\ref{l2pt4}. For $i=0,\ldots,m$, set $e_{i} := \vert <X_{i}> \vert$ and prove by
induction on $i$ the inclusion
$$ \bigoplus _{j=0}^{e_{i}} \ex j{{\goth a}_{i}}\wedge  \ex {k-j}{E_{X_{i}}} \subset 
V_{k}.$$

For $i=0,\ldots,n$, denote by $A_{i}$ the connected closed subgroup of $G$ whose Lie
algebra is ${\goth a}_{i}$. By Lemma~\ref{l2pt4},(ii) and (iii), 
$$ V_{k,{\goth p}} = \bigoplus _{j=0}^{e_{i}} V_{j,{\goth p}_{i}}\wedge 
\ex {k-j}{E_{X_{i}}},$$
for $i=0,\ldots,m$. Then, by Proposition~\ref{ppt4}, the induction hypothesis and
Lemma~\ref{lint}, the inclusion is true for $i=0$ since $E_{X_{0}}$ is invariant under
$A_{0}$. Suppose $i>0$ and the inclusion true for $i-1$. Let $E'_{X_{i}}$ be the
intersection of $E_{X_{i-1}}$ and ${\goth a}_{i}$. Denote by ${\goth q}_{i}$ the parabolic
subalgebra of ${\goth a}_{i}$ containing ${\goth b}\cap {\goth a}_{i}$ and such that
${\goth a}_{i-1}$ is the derived algebra of the reductive factor of ${\goth q}_{i}$
containing ${\goth h}\cap {\goth a}_{i}$. Then
$$ E_{X_{i-1}} = E'_{X_{i}} \oplus E_{X_{i}}, \quad  
{\goth a}_{i} = {\goth a}_{i-1} \oplus E'_{X_{i}} ,$$
$$ \ex j{{\goth a}_{i-1}}\wedge  \ex {k-j}{E_{X_{i-1}}} =   \bigoplus _{l=0}^{k-j}  
\ex j{{\goth a}_{i-1}}\wedge \ex l{E'_{X_{i}}} \wedge \ex {k-j-l}{E_{X_{i-1}}}$$
for $j=0,\ldots,e_{i-1}$. As a result,
$$ \bigoplus _{j=0}^{e_{i-1}} \ex j{{\goth a}_{i-1}}\wedge  \ex {k-j}{E_{X_{i-1}}} =
\bigoplus _{j=0}^{e_{i}} V_{j,{\goth q}_{i}}\wedge \ex {k-j}{E_{X_{i}}} .$$
By Proposition~\ref{ppt1} and Remark~\ref{rpt1}, for $j=0,\ldots,e_{i}$, the
$A_{i}$-submodule of $\ex j{{\goth a}_{i}}$ generated by $V_{j,{\goth q}_{i}}$ is equal 
to $\ex j{{\goth a}_{i}}$ since ${\goth a}_{i-1}$ is simple
and $\j ai - \j a{i-1}=1$. Then, by Lemma~\ref{lint}, the $A_{i}$-submodule of
$\ex k{{\goth g}}$ generated by
$$ \bigoplus _{j=0}^{e_{i-1}} \ex j{{\goth a}_{i-1}}\wedge  \ex {k-j}{E_{X_{i-1}}}$$
is equal to
$$ \bigoplus _{j=0}^{e_{i}} \ex j{{\goth a}_{i}}\wedge  \ex {k-j}{E_{X_{i}}}$$
since $E_{X_{i}}$ is invariant under $A_{i}$, whence the assertion and the theorem since
for $i=m$ the sum is equal to $\ex k{{\goth g}}$.
\end{proof}

\appendix

\section{Some remarks on root systems} \label{rs}
Let $\beta $ be in $\Pi $ and $X := \Pi \setminus \{\beta \}$. Set
${\goth p}_{\u} := {\goth p}_{\u,X}$ and $d := \dim {\goth p}_{\u,X}$. The goal of
the section is the following proposition:

\begin{prop}\label{prs}
{\rm (i)} Suppose $\Pi $ of type ${\mathrm {A}}_{\rg}$ and $X$ not connected. Then
$\beta = \beta _{s+1}$ for some $s$ in $\{1,\ldots,\rg -2\}$,
$$\n =2, \quad n_{1} = \frac{s(s+1)}{2}, \quad n_{2} = \frac{(\rg-s-1)(\rg-s)}{2} .$$
Moreover, if $2d + n_{1}\leq n$ then 
$$ \rg \geq 6, \quad
s \leq \frac{1}{6}(2\rg -3 - \sqrt{ 4\rg ^{2} + 12 \rg + 9}), \quad   2d+n_{2} > n .$$

{\rm (ii)} Suppose $\Pi $ of type ${\mathrm {B}}_{\rg}$ or ${\mathrm {C}}_{\rg}$ and $X$
not connected. Then $\beta = \beta _{s+1}$ for some $s$ in $\{1,\ldots,\rg -2\}$,
$$\n =2, \quad n_{1} = \frac{s(s+1)}{2}, \quad n_{2} = (\rg-s-1)^{2} .$$
Moreover, if $2d + n_{1}\leq n$ then 
$$ \rg \geq 7, \quad
s \leq \frac{1}{10}(8\rg -9 - \sqrt{ 24\rg ^{2} + 16 \rg + 1}),  \quad 2d+n_{2} > n .$$

{\rm (iii)} Suppose $\Pi $ of type ${\mathrm {D}}_{\rg}$. If $\beta =\beta _{\rg -2}$ then
$2d > n$. If $\beta $ is different from $\beta _{\rg -2}$ and $X$ is not connected, then
$$\n =2, \quad n_{1} = \frac{s(s+1)}{2}, \quad n_{2} = (\rg-s-1)^{2} .$$
Moreover, if $2d + n_{1}\leq n$ then 
$$ \rg \geq 8, \quad
s \leq \frac{1}{10}(8\rg -13 - \sqrt{ 24\rg ^{2} - 8\rg + 9}),  \quad 2d+n_{2} > n .$$

{\rm (iv)} Suppose that $\Pi $ is exceptional. If $2d\leq n$ then $X$ is connected.
\end{prop}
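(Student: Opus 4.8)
The plan is to treat the four cases of Proposition~\ref{prs} separately, since each concerns the combinatorics of the Dynkin diagram $\Pi $, but to organize all of them around one elementary observation. Recall that $d = \dim {\goth p}_{\u}$ is exactly the number of positive roots whose support contains $\beta $, while $n = \vert {\cal R}_{+} \vert $. Since the connected components $\poi X1{,\ldots,}{\n}{}{}{}$ of $X = \Pi \setminus \{\beta \}$ are pairwise non-adjacent, every positive root either lies in some $<X_{i}>$ or else has $\beta $ in its support; hence
$$ n = \poi n1{+\cdots +}{\n}{}{}{} + d .$$
This identity is the engine of the proof. When $\n = 2$ it rewrites the hypothesis $2d+n_{1}\leq n$ as $d\leq n_{2}$ and the desired conclusion $2d+n_{2}>n$ as $d>n_{1}$, so that both reduce to direct comparisons between $d$, $n_{1}$ and $n_{2}$.

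First I would settle the shape of $X$ in each type. Removing an interior, non-branch node $\beta _{s+1}$ from a diagram of type ${\mathrm A}$, ${\mathrm B}$, ${\mathrm C}$ or ${\mathrm D}$ leaves two components, the left one always of type ${\mathrm A}_{s}$, so that $n_{1}=\frac{s(s+1)}{2}$, and the right one of type ${\mathrm A}$, ${\mathrm B}$, ${\mathrm C}$ or ${\mathrm D}$ according to the original type; reading off its number of positive roots gives $n_{2}$, and then the identity above determines $d = n-n_{1}-n_{2}$. In type ${\mathrm D}_{\rg}$ the node $\beta _{\rg -2}$ is the unique branch node, so its deletion produces three components (${\mathrm A}_{\rg -3}$ and two copies of ${\mathrm A}_{1}$); this is the special situation treated first in~(iii), where I would compute $d = \frac{1}{2}(\rg +5)(\rg -2)$ from the identity and check $2d-n = 4\rg -10 > 0$.

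With $n_{1}$, $n_{2}$, $d$ now explicit polynomials in $s$ and $\rg $, the hypothesis $d\leq n_{2}$ becomes a polynomial inequality in $s$. For type ${\mathrm A}$ the factor $\rg -s$ cancels and it collapses to the linear bound $s\leq \frac{\rg -3}{3}$, which corresponds to the perfect-square radicand $4\rg ^{2}+12\rg +9=(2\rg +3)^{2}$ appearing in~(i). For types ${\mathrm B}/{\mathrm C}$ and ${\mathrm D}$ it is a genuine quadratic $as^{2}-bs+c\geq 0$; computing its discriminant gives $24\rg ^{2}+16\rg +1$ and $24\rg ^{2}-8\rg +9$ respectively, so that the quadratic formula yields exactly the upper bounds on $s$ stated in~(ii) and~(iii). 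Demanding that each bound leave room for an admissible $s\geq 1$ then forces $\rg \geq 6$, $\rg \geq 7$, $\rg \geq 8$. Finally the remaining conclusion $2d+n_{2}>n$, i.e. $d>n_{1}$, reduces to a linear inequality in $s$ (for type ${\mathrm A}$, to $s<\frac{2}{3}\rg $) that holds automatically for every $s$ satisfying the upper bound just found.

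For the exceptional part~(iv) there is no free parameter, so the plan is a finite verification: for each of ${\mathrm E}_{6}$, ${\mathrm E}_{7}$, ${\mathrm E}_{8}$, ${\mathrm F}_{4}$ (the rank of ${\mathrm G}_{2}$ being too small for $X$ to disconnect) I would run through the nodes $\beta $ whose removal disconnects $\Pi $, compute $d$ and $n$ from the same partition identity, and check $2d>n$ node by node, thereby proving the contrapositive of~(iv). The main obstacle is not conceptual but is one of bookkeeping: keeping the component types and their root counts correct across all the Bourbaki labelings, and carrying the quadratic estimates through without sign errors so that the three discriminants and the thresholds $\rg \geq 6,7,8$ come out precisely as claimed.
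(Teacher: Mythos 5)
Your proposal is correct and follows essentially the same route as the paper: the identity $n=n_{1}+\cdots+n_{\n}+d$, explicit root counts for the components obtained by deleting $\beta _{s+1}$, reduction to quadratic inequalities in $s$ whose discriminants are $(2\rg +3)^{2}$, $24\rg ^{2}+16\rg +1$ and $24\rg ^{2}-8\rg +9$, and a finite verification in the exceptional types. Your reformulation of the hypotheses as $d\leq n_{2}$ and $d>n_{1}$ and the observed factorizations are only cosmetic improvements on the paper's quadratic-formula computations; note that your type ${\mathrm A}$ bound $s\leq \frac{\rg -3}{3}$ agrees with the paper's own proof (where the numerator is $4\rg -3$) rather than with the literal formula in the statement of (i).
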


We prove the proposition case by case. So, in the classical case, we suppose
$\rg \geq 3$ and $X$ not connected.

\subsection{Type ${\mathrm {A}_{\rg}}$} \label{rs1}
As $X$ is not connected, $\n=2$ and $\beta = \beta _{s+1}$ for some $s$ in
$\{1,\ldots,\rg-2\}$. Then
$$ n_{1} = \frac{s(s+1)}{2}, \quad n_{2} = \frac{(\rg-s-1)(\rg-s)}{2}, \quad
d = n -n_{1}-n_{2},$$
$$ n - 2d - n_{1} = \frac{1}{2}(3s^2  + (-4\rg + 3)s + \rg^2- 3\rg).$$
If $n-2d-n_{1} \geq 0$ then
$$ s \leq \frac{1}{6}(4\rg -3 - \sqrt{4\rg^{2} + 12\rg + 9}) \quad  \text{or} \quad
s \geq \frac{1}{6}(4\rg -3 + \sqrt{4\rg^{2} + 12\rg + 9}) .$$
As $s\geq 1$, the first inequality is possible only if $\rg \geq 6$. The second
inequality is impossible since its right hand side is bigger than $\rg -2$ and
$s$ is at most $\rg -2$.

By the above equalities,
$$ n -2d - n_{2} = 3s^2  + (-2\rg + 3)s - 2\rg.$$
If the left hand side is nonnegative then
$$ s \leq \frac{1}{6}(2\rg -3 - \sqrt{4\rg^{2} + 12\rg + 9}) \quad  \text{or} \quad
s \geq \frac{1}{6}(2\rg -3 + \sqrt{4\rg^{2} + 12\rg + 9}) .$$
The first inequality is impossible since its right hand side is negative. The second
inequality is possible only if $\rg \geq 7$ since $s\leq \rg -2$. Moreover, it is not
possible to have $n\geq 2d+n_{1}$ and $n\geq 2d+n_{2}$ since
$$ \frac{1}{6}(2\rg -3 + \sqrt{4\rg^{2} + 12\rg + 9}) >
\frac{1}{6}(4\rg -3 - \sqrt{4\rg^{2} + 12\rg + 9}),$$
whence Assertion (i) of Proposition~\ref{prs}.

\subsection{Type ${\mathrm {B}_{\rg}}$ or ${\mathrm {C}_{\rg}}$}\label{rs2}
As $X$ is not connected, $\n=2$ and $\beta = \beta _{s+1}$ for some $s$ in
$\{1,\ldots,\rg-2\}$. Then
$$ n_{1} = \frac{s(s+1)}{2}, \quad n_{2} = (\rg-s-1)^{2}, \quad
d = n -n_{1}-n_{2},$$
$$ n - 2d - n_{1} = \frac{1}{2}(5s^2  + (-8\rg + 9)s + 2\rg^2- 8\rg + 4).$$
If $n-2d-n_{1} \geq 0$ then
$$ s \leq \frac{1}{10}(8\rg -9 - \sqrt{24\rg^{2} + 16\rg + 1}) \quad  \text{or} \quad
s \geq \frac{1}{10}(8\rg -9 + \sqrt{24\rg^{2} + 16\rg + 1}) .$$
As $s\geq 1$, the first inequality is possible only if $\rg \geq 7$. The second
inequality is impossible since its right hand side is bigger than $\rg -2$ and
$s$ is at most $\rg -2$.

By the above equalities,
$$ n -2d - n_{2} = 2s^2  + (-2\rg + 5)s - 4\rg+4.$$
If the left hand side is nonnegative then
$$ s \leq \frac{1}{4}(2\rg -5 - \sqrt{4\rg^{2} + 12\rg - 7}) \quad  \text{or} \quad
s \geq \frac{1}{4}(2\rg -5 + \sqrt{4\rg^{2} + 12\rg - 9}) .$$
The first inequality is impossible since its right hand side is negative. The second
inequality is impossible since
$$s\leq \rg -2 \quad  \text{and} \quad
\frac{1}{4}(2\rg -5 + \sqrt{4\rg^{2} + 12\rg - 9}) > \rg -2,$$
whence Assertion (ii) of Proposition~\ref{prs}.

\subsection{Type ${\mathrm {D}_{\rg}}$} \label{rs3}
As $X$ is not connected, $\beta $ is different from $\beta _{1}$, $\beta _{\rg-1}$,
$\beta _{\rg}$. If $\beta = \beta _{\rg -2}$ then $X$ has three connected components and
$$ d = \rg(\rg -1) - 2 - \frac{1}{2}(\rg-3)(\rg -2) .$$
In this case $n<2d$. Suppose $\rg \geq 5$ and $\beta = \beta _{s+1}$ for some $s$ in
$\{1,\ldots,\rg-4\}$. Then
$$ n_{1} = \frac{s(s+1)}{2}, \quad n_{2} = (\rg-s-1)(\rg -s-2), \quad
d = n-n_{1}-n_{2},$$
$$ n-2d - n_{1} = 5 s^{2} - s(4\rg -7) + \rg^{2}-5\rg+4 .$$
If $n-2d-n_{1}\geq 0$ then
$$ s \leq \frac{1}{10}(8\rg -13 - \sqrt{24\rg^{2}- 8\rg+9}) \quad  \text{or} \quad
s \geq \frac{1}{10}(8\rg -13 + \sqrt{24\rg^{2}-8\rg+9}) .$$
As $s\geq 1$, the first inequality is possible only if $\rg \geq 8$. The second
inequality is impossible since its right hand side is bigger than $\rg -4$ and
$s$ is at most $\rg -4$.

By the above equalities,
$$ n -2d - n_{2} = 2s^2  + (-2\rg + 4)s - 2\rg+2.$$
If the left hand side is nonnegative then
$$ s \leq -1 \quad  \text{or} \quad s\geq \rg-1 .$$
These inequalities are impossible since $s$ is positive and smaller than $\rg -3$,
whence Assertion (iii) of Proposition~\ref{prs}.

\subsection{The exceptional case.} \label{rs4}
Set ${\goth l}:={\goth l}_{X}$, ${\goth d} := {\goth d}_{X}$. Then
$2d = \dim {\goth g}-\dim {\goth l}$. For each case, we give all the possible dimensions
of ${\goth l}$ when $\vert X \vert = \rg -1$.

(a) The algebra ${\goth g}$ has type ${\mathrm {G}}_{2}$. Then $X$ is connected, whence
Assertion (iv) of Proposition~\ref{prs} for this case.

(b) The algebra ${\goth g}$ has type ${\mathrm {F}}_{4}$. In this case $n=24$ and
$$ \dim {\goth l} \in \{12,22\} \quad  \text{whence} \quad 2d \in \{40,30\}$$
and Assertion (iv) of Proposition~\ref{prs} for this case.

(c) The algebra ${\goth g}$ has type ${\mathrm {E}}_{6}$. In this case $n=36$ and
$$ \dim {\goth l} \in \{20,28,36,46\} \quad  \text{whence} \quad
2d \in \{58,50,42,32 \}$$
and Assertion (iv) of Proposition~\ref{prs} for this case since ${\goth d}$ is simple of
type ${\mathrm {D}}_{5}$ when $2d = 32$.

(d) The algebra ${\goth g}$ has type ${\mathrm {E}}_{7}$. In this case $n=63$ and
$$ \dim {\goth l} \in \{27,33,39,49,67,79\} \quad  \text{whence} \quad
2d \in \{106,100,94,84,66,54\}$$
and Assertion (iv) of Proposition~\ref{prs} for this case since ${\goth d}$ is simple of
type ${\mathrm {E}}_{6}$ when $2d=54$.

(e) The algebra ${\goth g}$ has type ${\mathrm {E}}_{8}$. In this case $n=120$ and
$$ \dim {\goth l} \in \{36,40,52,54,64,82,92,134\} \quad  \text{whence} \quad
2d \in \{212,208,196,194,184,166,156,114\}$$
and Assertion (iv) of Proposition~\ref{prs} for this case since ${\goth d}$ is simple of
type ${\mathrm {E}}_{7}$ when $2d=114$.

\end{document}